%% LyX 2.1.4 created this file.  For more info, see http://www.lyx.org/.
%% Do not edit unless you really know what you are doing.
\documentclass[letterpaper,english,preprintnumbers,amsmath,amssymb,superscriptaddress,nofootinbib,prl]{revtex4}
\usepackage{mathpazo}
\usepackage[T1]{fontenc}
\usepackage[latin9]{inputenc}
\setcounter{secnumdepth}{3}
\usepackage{float}
\usepackage{amsmath}
\usepackage{amsthm}
\usepackage{amssymb}
\usepackage{graphicx}
\usepackage{esint}

\makeatletter

%%%%%%%%%%%%%%%%%%%%%%%%%%%%%% LyX specific LaTeX commands.
\pdfpageheight\paperheight
\pdfpagewidth\paperwidth

%% Because html converters don't know tabularnewline
\providecommand{\tabularnewline}{\\}
\floatstyle{ruled}
\newfloat{algorithm}{tbp}{loa}
\providecommand{\algorithmname}{Algorithm}
\floatname{algorithm}{\protect\algorithmname}

%%%%%%%%%%%%%%%%%%%%%%%%%%%%%% Textclass specific LaTeX commands.
\@ifundefined{textcolor}{}
{%
 \definecolor{BLACK}{gray}{0}
 \definecolor{WHITE}{gray}{1}
 \definecolor{RED}{rgb}{1,0,0}
 \definecolor{GREEN}{rgb}{0,1,0}
 \definecolor{BLUE}{rgb}{0,0,1}
 \definecolor{CYAN}{cmyk}{1,0,0,0}
 \definecolor{MAGENTA}{cmyk}{0,1,0,0}
 \definecolor{YELLOW}{cmyk}{0,0,1,0}
}
  \theoremstyle{remark}
  \newtheorem{rem}{\protect\remarkname}
  \theoremstyle{definition}
  \newtheorem{defn}{\protect\definitionname}
\theoremstyle{plain}
\newtheorem{thm}{\protect\theoremname}
  \theoremstyle{plain}
  \newtheorem{prop}{\protect\propositionname}
  \theoremstyle{plain}
  \newtheorem{cor}{\protect\corollaryname}
  \theoremstyle{remark}
  \newtheorem*{rem*}{\protect\remarkname}
  \theoremstyle{plain}
  \newtheorem*{conjecture*}{\protect\conjecturename}

%%%%%%%%%%%%%%%%%%%%%%%%%%%%%% User specified LaTeX commands.

\usepackage{babel}
\providecommand{\conjecturename}{Conjecture}
  \providecommand{\definitionname}{Definition}
  \providecommand{\remarkname}{Remark}
\providecommand{\corollaryname}{Corollary}
\providecommand{\theoremname}{Theorem}

\makeatother

\usepackage{babel}
  \providecommand{\conjecturename}{Conjecture}
  \providecommand{\definitionname}{Definition}
  \providecommand{\propositionname}{Proposition}
  \providecommand{\remarkname}{Remark}
\providecommand{\corollaryname}{Corollary}
\providecommand{\theoremname}{Theorem}

\begin{document}

\title{Eigenvalue Attraction}

\author{Ramis Movassagh}

\email{q.eigenman@gmail.com}

\selectlanguage{english}%

\affiliation{Department of Mathematics, IBM T.J. Watson Research Center, Yorktown
Heights, NY, 10598}

\date{\today}
\begin{abstract}
We prove that the complex conjugate (c.c.) eigenvalues of a smoothly
varying real matrix attract (Eq. \ref{eq:EigAttrac}). We offer a
dynamical perspective on the motion and interaction of the eigenvalues
in the complex plane, derive their governing equations and discuss
applications. C.c.\ pairs closest to the real axis, or those that
are ill-conditioned, attract most strongly and can collide to become
exactly real. As an application we consider random perturbations of
a fixed matrix $M$. If $M$ is Normal, the total expected force on
any eigenvalue is shown to be only the attraction of its c.c.\ (Eq.
\ref{eq:TotalNormalForce}) and when $M$ is circulant the strength
of interaction can be related to the power spectrum of white noise.
We extend this by calculating the expected force (Eq. \ref{eq:Expected_cc_attraction})
for real stochastic processes with zero-mean and independent intervals.
To quantify the dominance of the c.c.\ attraction, we calculate the
variance of other forces. We apply the results to the Hatano-Nelson
model and provide other numerical illustrations. It is our hope that
the simple dynamical perspective herein might help better understanding
of the aggregation and low density of the eigenvalues of real random
matrices on and near the real line respectively. In the appendix we
provide a Matlab code for plotting the trajectories of the eigenvalues.
\end{abstract}
\maketitle
\tableofcontents{}\newpage{}

\section{\label{sec:Motivation}Background, illustration and summary of main
results }

\subsection{Background}

Much work has been devoted to the understanding of the behavior of
eigenvalues in the presence of randomness. The folklore of random
matrix analysis, especially in the case of Hermitian matrices, suggests
that the eigenvalues of a perturbed matrix repel. This has been pointed
out previously by various authors \cite{TrefethenEmbree2005,TerryTao2009}
and is well known in quantum physics \cite[p. 304-305]{LandauLifshitz1981}.
More recently, in agreement with the universality conjectures, the
level repulsion was proved for the eigenvalues of a Wigner matrix
\cite{Yau2010}. 

The stochastic dynamics of the eigenvalues of Hermitian matrices have
been vigorously studied in the past \cite[recommended]{TTao2012}.
Most celebrated is Dyson's Brownian motion, which proves that the
eigenvalues of a Hermitian matrix undergoing a Wiener process perform
a Brownian motion \cite{Dyson1962}.

In physics, one mainly studies Hermitian matrices and operators as
their eigenvalues correspond to observable quantities, which need
to be real. However, in recent years, non-Hermitian models have gained
much attention in the context of pinning of vortices in type II superconductors
initiated by Hatano and Nelson \cite{HatanoNelson1997} and followed
up in works on the nature of localized states and eigenvalue distributions
\cite{BrezinZee1998,Brouwer1997,FeinbergZee1997,Widom7}. Non-Hermitian
models also come up in fluid mechanics \cite[Ref. therein]{TrefethenEmbree2005},
transport phenomena in photonics \cite{Ramezani2011} and biophysical
phenomena \cite{Nelson2012}. 

In the Hatano-Nelson model, the eigenvalue distribution gives rise
to ``wings'' of real eigenvalues when the perturbation is sufficiently
strong (see for example \cite[Section 36]{TrefethenEmbree2005} and
citations therein as well as Figs. \ref{fig:Hatano-Nelson} and \ref{fig:Dynamics_HN_different_g}).
The wings result from the motion of complex eigenvalues that move
in response to the perturbation and ultimately sit on the real axis.
Goldsheid et al derived an equation for the shape of ``the winged''
spectrum \cite{Goldsheid1998}. According to \cite{HatanoNelson1997},
these eigenvalues correspond to localized eigenstates.

In investigating the (de)localization of the eigenstates, Feinberg
and Zee \cite{FeinbergZee1997}, argued that imaginary eigenvalues
near the real axis can attract when perturbed by a Hermitian matrix
by providing a $2\times2$ example of an imaginary diagonal matrix
perturbed by a $2\times2$ Hermitian matrix with zero diagonal entries.
Later, Bloch et al \cite{Bloch2012} considered antisymmetric perturbations
of real symmetric matrices in the context of two-color quantum chromodynamics
and provided examples that a Hermitian matrix perturbed by a real
antisymmetric perturbation can give rise to attraction of eigenvalues.
To our knowledge, attraction of the eigenvalues and their eventual
aggregation on the real line, in a general setting, has not been proved. 

In this paper, we prove a simple theorem, which under very mild set
of assumptions shows that the complex conjugate eigenvalues of a smoothly
varying real matrix $M\left(t\right)$ attract (i.e., pull on each
other). We then consider probabilistic settings where the underlying
evolution of the matrix is random and prove statistical dynamical
properties of any given eigenvalue. We extend the results to real
stochastic processes, which naturally leads to a conjecture on the
cause of aggregation and low density of the eigenvalues of real random
matrices on and near the real line. A main emphasis of this work is
a many-body dynamical perspective on the interaction and motion of
the eigenvalues in the complex plane. Throughout, we illustrate the
theory with examples and numerical results.

\subsection{\label{sub:Illustration}An illustration}
\begin{rem}
\textbf{Explanation of the figures:} All the plots were done in Matlab.
We take the vertical (horizontal) axis to be the imaginary (real)
axis and plot the eigenvalues of $M\left(t\right)$. The red dots
are the eigenvalues of $M\left(0\right)$. To show the dynamics of
the eigenvalues as a function of $t$, we plot the eigenvalues of
$M\left(t\right)$ in the complex plane in gray scale, where at $t=0$
they are shown in white (coincide with the red dots) and darken as
$t$ increases till their final position at $t=t_{max}$ shown in
black. The eigenvalues of $M\left(t\right)$ at any $0\le t\le t_{max}$
have the same gray scale color. In Matlab we use ``$\mathtt{hold}\mbox{ }\mathtt{on;}$''
to show the eigenvalues for all $t$. In the appendix we provide a
Matlab code that can be used to plot the trajectory of the eigenvalues
similar to what is done here.
\end{rem}
Demo 1: In (Fig. \ref{fig:Stochastic-dynamics-of}, left) we show
the spectral dynamics of the Hatano-Nelson model $M\left(t\right)=H+\delta t\mbox{ }P$,
where $M\left(0\right)\equiv H$ is 
\begin{figure}
\centering{}\includegraphics[scale=0.4]{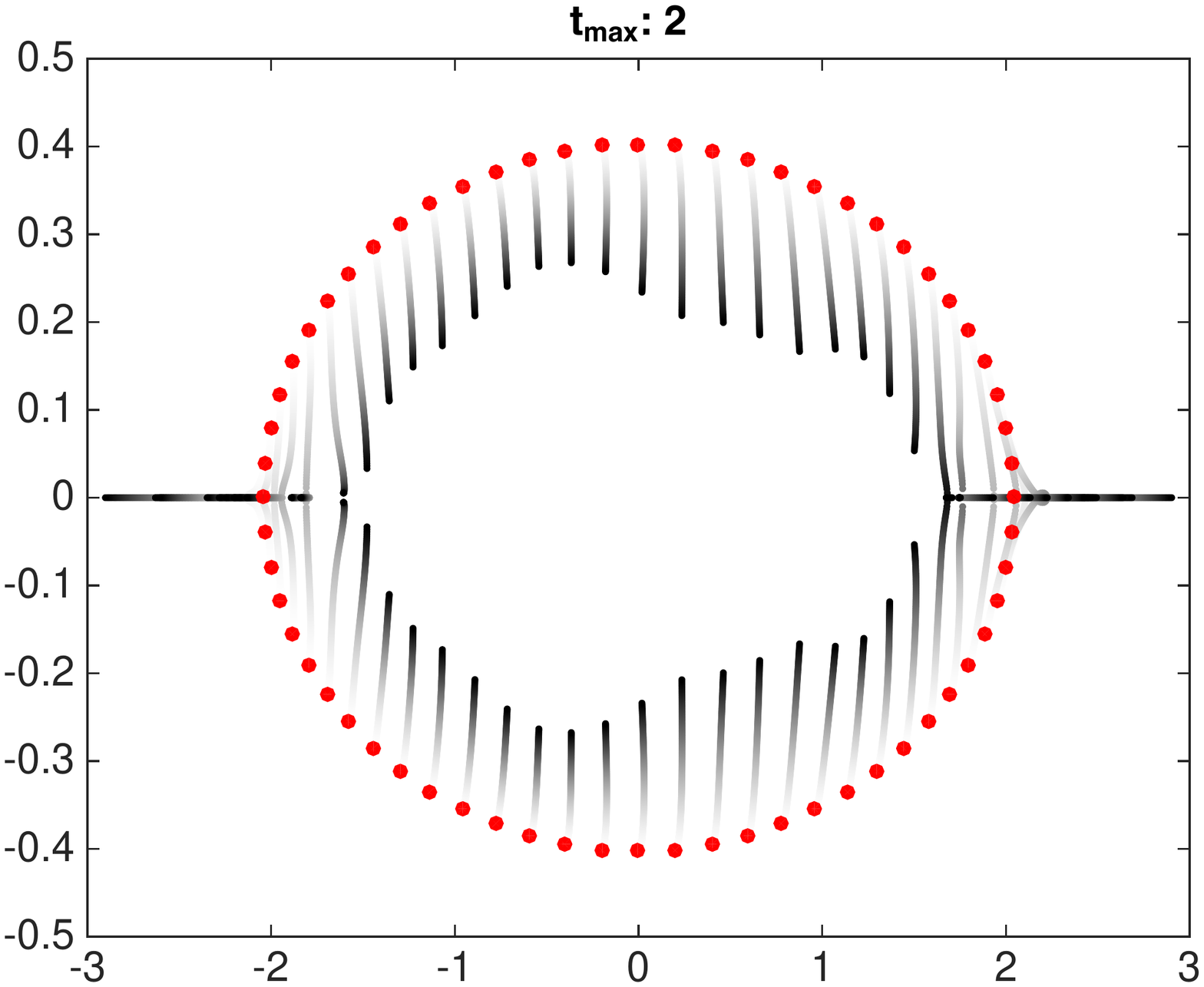}$\quad$\includegraphics[scale=0.34]{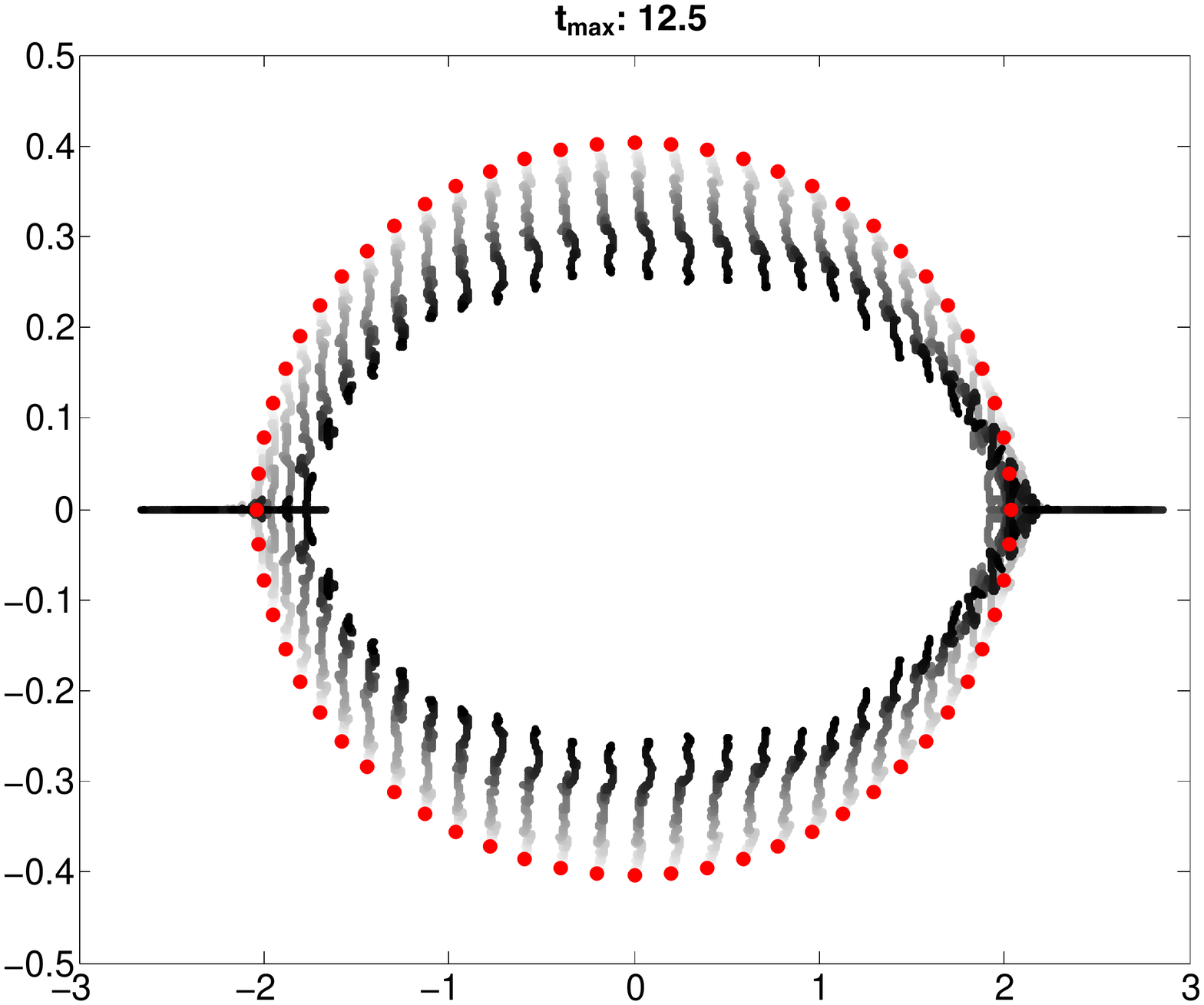}\caption{\label{fig:Hatano-Nelson}Hatano-Nelson model with $g=0.2$. Left
(Demo 1): Small perturbation. Right (Demo 2): \label{fig:Stochastic-dynamics-of}
Stochastic dynamics. In Demo 2 the final time is much larger because
the random impulses imparted at times $0.25i$ make for much slower
net motion over time.}
\end{figure}
 
\begin{equation}
H=\left[\begin{array}{ccccc}
0 & e^{g} &  &  & e^{-g}\\
e^{-g} & 0 & e^{g}\\
 & e^{-g} & 0 & \ddots\\
 &  & \ddots & \ddots & e^{g}\\
e^{g} &  &  & e^{-g} & 0
\end{array}\right]\quad,\label{eq:Hatano-Nelson}
\end{equation}
where $g$ governs the non-Hermiticity and $P$ is a real diagonal
matrix of random Gaussians. In this figure we took $t_{max}=2$, $g=0.2$,
$n=64$, which makes $\left\Vert H\right\Vert _{2}=2.04$. The boundary
terms, i.e., $\left(1,n\right)$ and $\left(n,1\right)$ entries,
are crucial for the spectral properties \cite[Sec. 36]{TrefethenEmbree2005}. 

Comment: When $g\rightarrow0$, this model coincides with the Anderson
model of localization with periodic boundary conditions \cite{Anderson58}.
However, the latter is a Hermitian model whose properties are quite
different from the  Hatano-Nelson model with $g\ne0$.\\

Demo 2: In (Fig. \ref{fig:Stochastic-dynamics-of}, right) We took
$g=0.2$ as before and take the time discretization $t_{i}=0.25i$
for $i=0,1,\cdots,50$, when at each $t_{i}$ an independent matrix
$P\left(t_{i}\right)$ is introduced according to $P\left(t_{i}\right)=\mbox{diag}\left(\epsilon_{1},\epsilon_{2},\cdots,\epsilon_{n}\right)$
where $\epsilon_{j}$'s are drawn independently from a standard normal
distribution and each $P\left(t_{i}\right)$ is normalized to have
a unit $2-$norm; i.e, $\mbox{diag}\left[P\left(t_{i}\right)\right]$
is uniform on the unit sphere. A piece-wise linear discrete stochastic
process is therefore constructed. We plot the eigenvalues of $M\left(0.25i+\delta t\right)=M\left(0.25i\right)+\delta t\mbox{ }P\left(t_{i}\right)$
with $0\le\delta t\le0.25$ and time steps $\Delta t=0.01$. In (Fig.
\ref{fig:Stochastic-dynamics-of}, right), we show the evolution of
$M\left(t\right)$ for the total time $t\in\left[0,12.5\right]$.
Note that the eigenvalues move towards the real line as before but
make less progress because of the stochastic kicks at times $0.25i$.
Below we show that there is an attraction between complex conjugate
pairs that largely governs this behavior of the spectral dynamics
in this case. See Section \ref{sec:Stochatic-dynamics} for further
theoretical discussion.

Comment: The procedure just described does not provide a smooth stochastic
process as it is not differentiable at times $0.25i$ (a set of measure
zero). This issue persists for piece-wise linear discretization where
in each interval a new random $P\left(t_{i}\right)$ is used. In fact
a continuous Brownian motion provides an example of a continuous map
that, with probability one, is nowhere differentiable \cite{TTao2012}.

\subsection{Summary of the main results}

In this paper we consider the general problem of the interaction of
the eigenvalues of a matrix, $M\left(t\right)$, whose entries vary
with respect to a real parameter $t$, which we think of as time.
Although, eigenvalue repulsion is quite a general feature of Hermitian
matrices, attraction of eigenvalues is rarely considered. Below we
will first define Eigenvalue Attraction (see Def. \ref{(Attraction-and-Repulsion)})
and give the general equations of the motion of any one of the eigenvalues,
which depends on the other eigenvalues and the inertia of the matrix
itself. We introduce a new notation more akin to formulations of interacting
many-body systems. Theorem \ref{Thm:(Eigenvalue-Attraction)} gives
a simple proof of complex conjugate attraction. Namely, any non-real
eigenvalue of $M\left(t\right)\in\mathbb{R}^{n\times n}$ attracts
its complex conjugate according to
\[
\mbox{Force of }\overline{\lambda_{i}}\mbox{ on }\lambda_{i}=-i\mbox{ }\frac{\left|\mathbf{u}_{i}^{T}\overset{\centerdot}{M}\left(t\right)\mathbf{v}_{i}\right|^{2}}{\mbox{Im}\left(\lambda_{i}\left(t\right)\right)},
\]
where $\mathbf{v}_{i}$ and $\mathbf{u}_{i}$ are the right (i.e.,
standard) and left eigenvectors corresponding to the eigenvalue $\lambda_{i}$
and $\overline{\lambda_{i}}$ denotes the complex conjugate of $\lambda_{i}$. 

We will apply this to various $M\left(t\right)$ such as the Hatano-Nelson
model or a convex combination of two deterministic matrices. We then
introduce randomness to obtain probabilistic statements. In particular,
we consider the pencil of matrices $M\left(t\right)\equiv M+\delta t\mbox{ }P$,
where $M$ is a fixed matrix and $P$ is a random real matrix whose
entries have zero mean, finite fourth moments and are independently
and identically distributed (iid). This special case of perturbing
a fixed matrix comes up often in applications. In this limit and in
order to quantify the dominance of the complex conjugate attraction,
we calculate the expectation and variance of all other forces excluding
the complex conjugate. We prove that when $M$ is a normal matrix
(i.e., unitary diagonalizable), the total expected force on any eigenvalue
is only due to its complex conjugate, and when $M$ is circulant and
$P$ is diagonal, the force only depends on the eigenvalues and that
the strength of interaction is the power spectrum of the diagonal
entries of $P$, which can be a constant independent of the eigenpairs.
For example, if $p_{ii}\sim{\cal N}\left(0,1\right)$ then the strength
of interaction is the power spectrum of white noise. We will prove
other results applicable to general circulant matrices and apply them
to the Hatano-Nelson model to analyze its spectral dynamics. 

We then make a time discretization $0<t_{1}<t_{2}<\cdots$ and define
$M\left(t\right)$ to be a stochastic process defined by $M\left(t_{i}+\delta t\right)=M\left(t_{i}\right)+\delta t\mbox{ }P\left(t_{i}\right)$,
where $\delta t\in\left[0,t_{i+1}-t_{i}\right)$, $M\left(0\right)$
is a fixed real matrix and each $P\left(t_{i}\right)$ is a real and
random, whose entries are independent with zero mean and finite fourth
moments. We construct a smooth family of stochastic processes $\overset{\centerdot}{M}_{\epsilon}\left(t\right)=P_{\epsilon}\left(t\right)$,
where $M\left(t\right)=\lim_{\epsilon\rightarrow0}M_{\epsilon}\left(t\right)$
and explicitly write down the differential equations governing the
motion of any eigenvalue. The expected force of attraction is always
\begin{eqnarray}
\mathbb{E}[\mbox{Force of }\overline{\lambda_{i}}\mbox{ on }\lambda_{i}] & = & -i\frac{\sum_{m,\ell}\mathbb{E}\left[p_{m\ell}^{2}\right]\left|u_{i}^{*,m}\right|^{2}\left|v_{i}^{,\ell}\right|^{2}}{2\;\mbox{Im}\left(\lambda_{i}\right)}\label{eq:FinalResult}\\
 & \overset{\mbox{iid}}{=} & -i\frac{\mathbb{E}\left[p^{2}\right]\left\Vert \mathbf{u_{i}}\right\Vert _{2}^{2}}{2\;\mbox{Im}\left(\lambda_{i}\right)}\label{eq:FinalREsultiid}
\end{eqnarray}
where $\overline{\lambda_{i}}$ is an eigenvalue that is complex conjugate
to $\lambda_{i}$, $\mathbf{v_{i}}$ and $\mathbf{u_{i}^{*}}$ are
the corresponding right and left eigenvectors respectively and the
second equality assumes $\mathbb{E}\left[p_{m,\ell}^{2}\right]$ is
the same for all $m$ and $\ell$. Clearly, the attraction is strongest
near the real line. Since the proportionality constant depends on
the $2-$norm of the left eigenvector, the force of attraction can
be quite strong for ill-conditioned eigenvalues. When this attractive
force is dominant over the force exerted by the rest of the eigenvalues,
the complex conjugate pair approach one another and eventually collide
and ``scatter'' near and ultimately reside at different points on
the real line. At this point, the well-known repulsion mechanism takes
over and the reality of the matrix ensures that each eigenvalue remains
real.

The motion on the real line is not permanent. In most cases, an eigenvalue
that moves about on the real line gets close enough to (i.e., collides
with) another eigenvalue on the real line, after which they form a
new complex conjugate pair and shoot off into the complex plane. The
alternative would be that they would repel and remain on the real
line. However, in the majority of cases we investigated, it seems
'energetically' more favorable for them to form a new complex conjugate
pair perhaps because there are more degrees of freedom available away
from the real line. In this paper, we will not rigorously investigate
this to any depth. 

As mentioned above, any stochastic process (e.g., Wiener process,
Brownian motion) is non-smooth, despite often being continuous. The
appearance of a new $P\left(t_{i}\right)$ makes the limits of the
derivative from left and right unequal $\overset{\centerdot}{M}\left(t_{i}^{>}\right)\ne\overset{\centerdot}{M}\left(t_{i}^{<}\right)$,
yet there are powerful tools of matrix calculus that can be utilized
if $M\left(t\right)$ were differentiable. Moreover, from the applied
perspective, nothing is instantaneous. 

In Section \ref{sec:Stochatic-dynamics}, we give the basic definitions
of discrete stochastic processes and introduce a smoothing construction
that can be used to smoothen any discrete stochastic process with
a control over the rapidity of (dis)appearance of every $P\left(t_{i}\right)$
within $\left[t_{i},t_{i+1}\right]$ (see Eq. \ref{eq:M_t}). The
original (non-differentiable) stochastic process is
\[
M\left(t\right)=\lim_{\epsilon\rightarrow0}M_{\epsilon}\left(t\right)\quad.
\]

We will conclude by applying the dynamical perspective developed here
to an open problem pertaining to the sparsity of the eigenvalues of
random real matrices near the real line, and then list further open
problems. It is our hope that this work will prove useful in proving
the conjecture stated in Sec \ref{sec:Further-discussions}. 
\begin{rem}
We make a remark that should otherwise be obvious. In what follows
the eigenvalue attraction holds for all $t$ and $M(t)$ does {\it not}
need to be a perturbation of a fixed matrix. The latter is, however,
an application of this work (see Subsection \ref{sec:Random-perturbations}).
Moreover, attraction holds for deterministic evolutions under minimal
assumptions and  randomness is {\it not} a requirement.
\end{rem}

\section{\label{sec:Dynamics-of-eigenvalues}Eigenvalues as a many-body system}

In this paper we take the point of view that the eigenvalues are interacting
identical particles whose motions take place in the complex plane
and our goal is to better understand their dynamics. The eigenvalues
of $M\left(t\right)$ are also functions of time and the $i^{th}$
eigenvalue is denoted by $\lambda_{i}\left(t\right)$. The eigenvalues
of a continuously varying $M\left(t\right)$ are also continuous in
$t$. That is, their motion follows a connected path in the complex
plane. This follows from the fact that eigenvalues are roots of a
characteristic polynomial, which itself is continuous, and a theorem
due to Rouché \cite[Chapter 4 ]{StewartSun}. 

Comment: We take the eigenvalues to have unit mass, whereby the acceleration,
$\overset{\centerdot\centerdot}{\lambda_{i}}\left(t\right)$, can
be identified with the ``force'' required to produce that acceleration
on an eigenvalue. Below we shall use the word force as it provides
better intuition.

\subsection{\label{sub:General-dynamics-of}General dynamics of eigenvalues}

Here we follow a derivation similar to that given by T. Tao \cite{TerryTao2009}
to obtain the governing dynamical equations for the eigenpairs of
a general smoothly varying $M\left(t\right)$, although the equations
were derived in earlier references. We assume that the eigenvalues
are simple. The eigenvalue equations are 
\begin{eqnarray}
M\left(t\right)\mbox{ }\mathbf{v_{i}}\left(t\right) & = & \lambda_{i}\left(t\right)\mathbf{v_{i}}\left(t\right)\label{eq:LeftEvec-1}\\
\mathbf{u_{i}}^{*}\left(t\right)\mbox{ }M\left(t\right) & = & \lambda_{i}\left(t\right)\mathbf{u_{i}}^{*}\left(t\right),\label{eq:RightEvec-1}
\end{eqnarray}
where $\lambda_{i}\left(t\right)$ are the eigenvalues, $\mathbf{v_{i}}\left(t\right)$
the (right) eigenvectors, which we take to be normalized, and $\mathbf{u_{i}^{*}}\left(t\right)$
are the left eigenvectors dual to $\mathbf{v_{i}}\left(t\right)$.
If we consider the matrix of eigenvectors $\mathbf{V}\left(t\right)=\left[\mathbf{v_{1}}\left(t\right)\mbox{ }\mathbf{v_{2}}\left(t\right),\dots,\mathbf{v_{n}}\left(t\right)\right]$,
then $\mathbf{u_{j}}^{*}\left(t\right)$ is the $j^{\mbox{th}}$ row
of $\mathbf{V}^{-1}\left(t\right)$ and 
\begin{equation}
\mathbf{u_{j}}^{*}\left(t\right)\mbox{ }\mathbf{v_{i}}\left(t\right)=\delta_{ij}\quad.\label{eq:orthogonality}
\end{equation}

Since $\mathbf{v}_{1}\left(t\right),\dots,\mathbf{v}_{n}\left(t\right)$
form a basis for $\mathbb{C}^{n}$, any vector $\mathbf{x}$ has the
expansion $\mathbf{x}=\sum_{j=1}^{n}\left[\mathbf{u_{j}}^{*}\left(t\right)\mbox{ }\mathbf{x}\right]\mathbf{v_{j}}\left(t\right)$.
Differentiating Eqs. \ref{eq:LeftEvec-1} and \ref{eq:RightEvec-1}
with respect to $t$, gives 
\begin{eqnarray*}
\overset{\centerdot}{M}\left(t\right)\mbox{ }\mathbf{v_{i}}\left(t\right)+M\left(t\right)\overset{\centerdot}{\mathbf{v_{i}}}\left(t\right) & = & \overset{\centerdot}{\lambda_{i}}\left(t\right)\mathbf{v_{i}}\left(t\right)+\lambda_{i}\left(t\right)\overset{\centerdot}{\mathbf{v_{i}}}\left(t\right)\\
\mathbf{u_{i}}^{*}\left(t\right)\mbox{ }\overset{\centerdot}{M}\left(t\right)+\overset{\centerdot}{\mathbf{u_{i}}^{*}}\left(t\right)\mbox{ }M\left(t\right) & = & \overset{\centerdot}{\lambda_{i}}\left(t\right)\mathbf{u_{i}^{*}}\left(t\right)+\lambda_{i}\left(t\right)\overset{\centerdot}{\mathbf{u_{i}}^{*}}\left(t\right)
\end{eqnarray*}
 Multiplying the first equation on the left by $\mathbf{u_{i}}^{*}\left(t\right)$,
and using Eq. \ref{eq:RightEvec-1} , we obtain the ``velocity''
of $\lambda\left(t\right)$ in the complex plane \footnote{We remark that the theory of pseudo-spectra \cite{TrefethenEmbree2005}
quantifies how far an eigenvalue can wander without quantifying the
direction of the motion} 
\begin{equation}
\overset{\centerdot}{\lambda_{i}}\left(t\right)=\mathbf{u_{i}}^{*}\left(t\right)\mbox{ }\overset{\centerdot}{M}\left(t\right)\mbox{ }\mathbf{v_{i}}\left(t\right).\label{eq:lambda_prime}
\end{equation}

In order to compute the acceleration on any eigenvalue we shall need
the derivatives of the left and right eigen\textit{vectors}. They
are simple to compute \cite{TerryTao2009},

\begin{eqnarray}
\overset{\centerdot}{\mathbf{v_{i}}}\left(t\right) & = & \sum_{j\ne i}\frac{\mathbf{u_{j}^{*}}\left(t\right)\mbox{ }\overset{\centerdot}{M}\left(t\right)\mbox{ }\mathbf{v_{i}}\left(t\right)}{\lambda_{i}\left(t\right)-\lambda_{j}\left(t\right)}\mathbf{v_{j}}\left(t\right)+\eta_{i}\left(t\right)\mathbf{v_{i}}\left(t\right)\label{eq:v_p}\\
\overset{\centerdot}{\mathbf{u_{i}}^{*}}\left(t\right) & = & \sum_{j\ne i}\frac{\mathbf{u_{i}^{*}}\left(t\right)\mbox{ }\overset{\centerdot}{M}\left(t\right)\mbox{ }\mathbf{v_{j}}\left(t\right)}{\lambda_{i}\left(t\right)-\lambda_{j}\left(t\right)}\mathbf{u_{j}^{*}}\left(t\right)-\eta_{i}\left(t\right)\mathbf{\mathbf{u_{i}^{*}}}\left(t\right)\label{eq:u_p}
\end{eqnarray}
 where $\eta_{i}\left(t\right)$ is a scalar function because a constant
multiple of an eigenvector is also an eigenvector. The second derivative,
or acceleration, of the eigenvalue $\lambda_{i}\left(t\right)$ is
obtained by differentiating Eq. \ref{eq:lambda_prime} one more time,
\[
\overset{\centerdot\centerdot}{\lambda_{i}}\left(t\right)=\overset{\centerdot}{\mathbf{u_{i}}^{*}}\left(t\right)\mbox{ }\overset{\centerdot}{M}\left(t\right)\mbox{ }\mathbf{v_{i}}\left(t\right)+\mathbf{u_{i}}^{*}\left(t\right)\mbox{ }\overset{\centerdot\centerdot}{M}\left(t\right)\mbox{ }\mathbf{v_{i}}\left(t\right)+\mathbf{u_{i}}^{*}\left(t\right)\mbox{ }\overset{\centerdot}{M}\left(t\right)\mbox{ }\overset{\centerdot}{\mathbf{v_{i}}}\left(t\right).
\]

Using Eqs. \ref{eq:v_p} and \ref{eq:u_p}, the second derivative
becomes \cite{TerryTao2009} 

\begin{eqnarray}
\overset{\centerdot\centerdot}{\lambda_{i}}\left(t\right) & = & \mathbf{u_{i}}^{*}\left(t\right)\mbox{ }\overset{\centerdot\centerdot}{M}\left(t\right)\mbox{ }\mathbf{v_{i}}\left(t\right)+2\sum_{j\ne i}\frac{\left[\mathbf{u_{i}^{*}}\left(t\right)\mbox{ }\overset{\centerdot}{M}\left(t\right)\mbox{ }\mathbf{v_{j}}\left(t\right)\right]\left[\mathbf{u_{j}^{*}}\left(t\right)\mbox{ }\overset{\centerdot}{M}\left(t\right)\mbox{ }\mathbf{v_{i}}\left(t\right)\right]}{\lambda_{i}\left(t\right)-\lambda_{j}\left(t\right)}\label{eq:lambda_pp_final}\\
 & = & \mathbf{v_{i}}^{*}\left(t\right)\mbox{ }\overset{\centerdot\centerdot}{M}\left(t\right)\mbox{ }\mathbf{v_{i}}\left(t\right)+2\sum_{j\ne i}\frac{\left|\mathbf{v_{i}^{*}}\left(t\right)\mbox{ }\overset{\centerdot}{M}\left(t\right)\mbox{ }\mathbf{v_{j}}\left(t\right)\right|^{2}}{\lambda_{i}\left(t\right)-\lambda_{j}\left(t\right)}\quad\mbox{if the matrix is Hermitian},\label{eq:lambda_pp_final_Normal}
\end{eqnarray}
where for normal matrices (e.g., Hermitian) one has $\mathbf{u}_{i}=\mathbf{v}_{i}$.
As pointed out by Tao, the first term can be seen as the inertial
force of the matrix and the second the force of interaction of the
eigenvalues. The origin of instantaneous repulsive force between eigenvalues
of a Hermitian matrix is easily seen by the second term in Eq. \ref{eq:lambda_pp_final_Normal}
\cite{TerryTao2009}. For example, if $\lambda_{i}\left(t\right)>\lambda_{j}\left(t\right)$,
then the force is positive, the effect of $\lambda_{j}\left(t\right)$
in the sum is to push $\lambda_{i}\left(t\right)$ to the right. Similarly,
if $\lambda_{i}\left(t\right)<\lambda_{j}\left(t\right)$, the effect
of $\lambda_{j}\left(t\right)$ is to exert a negative force on $\lambda_{i}\left(t\right)$.
Moreover the strength of the force is inversely proportional to their
distance ($1/\left|\lambda_{i}\left(t\right)-\lambda_{j}\left(t\right)\right|$)
which is clearly strongest when the eigenvalues are closest \cite{TerryTao2009}.
The repulsion is at work for Hermitian matrices for all $t$. 

Comment: Eqs. (\ref{eq:lambda_prime}-\ref{eq:lambda_pp_final_Normal})
are essentially the standard first and second order perturbation theory
results. See for example, Dirac \cite[Section 43]{dirac1958principles}
\footnote{Strictly speaking Dirac's derivation of Equation 10 in Section 43
of this reference, does not hold in general (e.g., non-Hermitian)
as the left eigenvectors are not 'bras' in his notation. The latter
is a Hermitian conjugate of a standard (right) eigenvector. In his
book, Dirac cites (Born, Heisenberg and Jordan, z.f. Physik 35, 565
(1925)) for these formulas}, Kato \cite{kato1976perturbation}, Wilkinson's wonderful exposition
\cite{wilkinson1965algebraic} and more recently \cite{tao2011random}.

\subsection{\label{sub:Eigenvalue-Attraction-Proof}Definition and Proof of Eigenvalue
Attraction}

Below we denote the complex conjugate of an eigenvalue or entry-wise
complex conjugation of an eigenvector with an over-line. To better
visualize the kinematics of the eigenvalues in the complex plane,
we write 
\begin{eqnarray}
\frac{1}{\lambda_{i}\left(t\right)-\lambda_{j}\left(t\right)} & = & \frac{\overline{\lambda_{i}\left(t\right)}-\overline{\lambda_{j}\left(t\right)}}{\left|\lambda_{i}\left(t\right)-\lambda_{j}\left(t\right)\right|^{2}}\equiv\frac{\hat{\mathbf{r}}_{ij}}{\left|\mathbf{r}_{ij}\right|};\label{eq:centralForce}\\
\mathbf{r}_{ij} & \equiv & \overline{\lambda_{i}\left(t\right)}-\overline{\lambda_{j}\left(t\right)}\nonumber \\
\hat{\mathbf{r}}_{ij} & \equiv & \frac{\overline{\lambda_{i}\left(t\right)}-\overline{\lambda_{j}\left(t\right)}}{\left|\lambda_{i}\left(t\right)-\lambda_{j}\left(t\right)\right|}\nonumber 
\end{eqnarray}
where $\mathbf{r}_{ij}$ is a vector in the complex plane stretching
from $\overline{\lambda_{j}\left(t\right)}$ to $\overline{\lambda_{i}\left(t\right)}$
and $\hat{\mathbf{r}}_{ij}$ is the corresponding unit vector. One
could further simplify the notation by denoting the complex number
\[
c_{ij}\equiv\mathbf{u_{i}^{*}}\left(t\right)\mbox{ }\overset{\centerdot}{M}\left(t\right)\mbox{ }\mathbf{v_{j}}\left(t\right)\quad,
\]
whereby $\overset{\centerdot}{\lambda_{i}}\left(t\right)=c_{ii}$
. In this paper, we denote complex conjugation of the entries of an
eigenvector by an over-bar on the corresponding index. For example,
$c_{\bar{i}\mbox{ }j}=\mathbf{u_{i}^{T}}\left(t\right)\mbox{ }\overset{\centerdot}{M}\left(t\right)\mbox{ }\mathbf{v_{j}}\left(t\right)$
and since $\overset{\centerdot}{M}\left(t\right)$ is real, $c_{i\mbox{ }\bar{j}}\mbox{ }c_{\bar{i}\mbox{ }j}=\left|\mathbf{u_{i}^{T}}\left(t\right)\mbox{ }\overset{\centerdot}{M}\left(t\right)\mbox{ }\mathbf{v_{j}}\left(t\right)\right|^{2}$
is a real non-negative number.

With this notation Eqs. \ref{eq:lambda_pp_final} and \ref{eq:lambda_pp_final_Normal}
read

\begin{eqnarray}
\overset{\centerdot\centerdot}{\lambda_{i}}\left(t\right) & = & \mathbf{u_{i}}^{*}\left(t\right)\mbox{ }\overset{\centerdot\centerdot}{M}\left(t\right)\mbox{ }\mathbf{v_{i}}\left(t\right)+2\sum_{j\ne i}c_{ij}c_{ji}\frac{\hat{\mathbf{r}}_{ij}}{\left|\mathbf{r}_{ij}\right|}\label{eq:lambda_pp_finalRAMIS-1}\\
\overset{\centerdot\centerdot}{\lambda_{i}}\left(t\right) & = & \mathbf{v_{i}}^{*}\left(t\right)\mbox{ }\overset{\centerdot\centerdot}{M}\left(t\right)\mbox{ }\mathbf{v_{i}}\left(t\right)+2\sum_{j\ne i}\left|c_{ij}\right|^{2}\frac{\hat{\mathbf{r}}_{ij}}{\left|\mathbf{r}_{ij}\right|}\quad\mbox{if the matrix is Hermitian}.\label{eq:lambda_pp_final_HermitianRAMIS}
\end{eqnarray}

We think of Eq. \ref{eq:lambda_pp_finalRAMIS-1} as 
\begin{eqnarray*}
\overset{\centerdot\centerdot}{\lambda_{i}}\left(t\right) & = & \mathbf{u_{i}}^{*}\left(t\right)\mbox{ }\overset{\centerdot\centerdot}{M}\left(t\right)\mbox{ }\mathbf{v_{i}}\left(t\right)+2\sum_{j\ne i}c_{ij}c_{ji}\frac{\hat{\mathbf{r}}_{ij}}{\left|\mathbf{r}_{ij}\right|}\\
 & = & \left\{ \mbox{Inertial force of }M\right\} +\sum_{j\ne i}\left\{ \mbox{force of }\lambda_{j}\mbox{ on }\lambda_{i}\right\} .
\end{eqnarray*}

\begin{defn}
\label{def:(Central-Force)-The} The force between $\lambda_{i}\left(t\right)$
and $\lambda_{j}\left(t\right)$ is called central if in Eq. \ref{eq:lambda_pp_finalRAMIS-1},
$c_{ij}c_{ji}=f(\lambda_{i}\left(t\right),\lambda_{j}\left(t\right),\dots)\left(\lambda_{i}\left(t\right)-\lambda_{j}\left(t\right)\right)$,
where $f(\lambda_{i}\left(t\right),\lambda_{j}\left(t\right),\dots)$
is a real function.
\end{defn}
Comment: As shown above $f(\lambda_{i}\left(t\right),\lambda_{j}\left(t\right),\dots)$
is generally a complex-valued function of $i^{th}$ and $j^{th}$
eigenvalues and eigenvectors as well as $\overset{\centerdot}{M}\left(t\right)$.
\begin{defn}
\label{(Attraction-and-Repulsion)}(Attraction and Repulsion) We say
$\lambda_{j}\left(t\right)$ attracts (repels) $\lambda_{i}\left(t\right)$
if the force between them is central and $f(\lambda_{i}\left(t\right),\lambda_{j}\left(t\right),\dots)$
is a negative (positive) function.\end{defn}
\begin{rem}
For general (not self-adjoint) matrices the force between any two
eigenvalues is not necessarily repulsive nor attractive. As seen in
Eq.\ref{eq:lambda_pp_finalRAMIS-1} , the orientation of the force
is along the ray $c_{ij}c_{ji}\hat{\mathbf{r}}_{ij}$, rendering generally
a non-central force law between the eigenvalues. This contrasts the
purely central (and repulsive) nature of the interaction of the eigenvalues
for Hermitian matrices (Eq. \ref{eq:lambda_pp_final_HermitianRAMIS}).
However, it is generally true that the force law between any two eigenvalues
is inversely proportional to their distance. \end{rem}
\begin{thm}
(Eigenvalue Attraction) \label{Thm:(Eigenvalue-Attraction)}Complex
conjugate eigenvalues of $M\left(t\right)$ attract (see Def. \ref{(Attraction-and-Repulsion)})
as long as $M\left(t\right)$ is real, the pair is not degenerate,
and $\mathbf{u_{i}^{T}}\left(t\right)\mbox{ }\overset{\centerdot}{M}\left(t\right)\mbox{ }\mathbf{v_{i}}\left(t\right)\ne0$
.\end{thm}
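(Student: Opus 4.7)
My plan is to isolate, in the many-body sum of Eq.~\ref{eq:lambda_pp_finalRAMIS-1}, the single term indexed by the complex-conjugate partner of $\lambda_{i}$ and to show that its coefficient $c_{ij}c_{ji}$ is a strictly positive real number; the corresponding contribution to $\ddot{\lambda}_{i}$ is then a vector pointing from $\lambda_{i}$ directly toward $\overline{\lambda_{i}}$, which is what the definitions of central force and attraction require.

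First I would exploit the reality of $M(t)$: taking entrywise complex conjugates of Eqs.~\ref{eq:LeftEvec-1}--\ref{eq:RightEvec-1} yields $M(t)\,\overline{\mathbf{v}_{i}(t)}=\overline{\lambda_{i}(t)}\,\overline{\mathbf{v}_{i}(t)}$ and the analogous identity for the left eigenvector. Hence if $j$ denotes the index for which $\lambda_{j}=\overline{\lambda_{i}}$, I may take $\mathbf{v}_{j}=\overline{\mathbf{v}_{i}}$ and $\mathbf{u}_{j}^{*}=\mathbf{u}_{i}^{T}$; the biorthogonality Eq.~\ref{eq:orthogonality} is automatically respected since $\mathbf{u}_{i}^{T}\overline{\mathbf{v}_{i}}=\overline{\mathbf{u}_{i}^{*}\mathbf{v}_{i}}=1$, and in any case an overall rescaling drops out of the product $c_{ij}c_{ji}$. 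The non-degeneracy hypothesis is equivalent to $\operatorname{Im}\lambda_{i}(t)\ne 0$, which guarantees that $j\ne i$, so this term contributes as a genuinely distinct summand, and that the denominator $\lambda_{i}-\lambda_{j}=2i\operatorname{Im}\lambda_{i}$ is nonzero and purely imaginary.

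Next I would compute the two matrix elements
\[
c_{ji}=\mathbf{u}_{i}^{T}(t)\,\dot{M}(t)\,\mathbf{v}_{i}(t),\qquad c_{ij}=\mathbf{u}_{i}^{*}(t)\,\dot{M}(t)\,\overline{\mathbf{v}_{i}(t)}.
\]
Because $M(t)$ is real, $\dot{M}(t)$ is real, and scalar complex conjugation of $c_{ji}$ returns exactly $c_{ij}$; consequently
\[
c_{ij}c_{ji}=\bigl|\mathbf{u}_{i}^{T}(t)\,\dot{M}(t)\,\mathbf{v}_{i}(t)\bigr|^{2},
\]
which is strictly positive by the hypothesis $\mathbf{u}_{i}^{T}\dot{M}\mathbf{v}_{i}\ne 0$. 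Substituting into Eq.~\ref{eq:lambda_pp_finalRAMIS-1} produces the $j$-th term
\[
-\,i\,\frac{\bigl|\mathbf{u}_{i}^{T}(t)\,\dot{M}(t)\,\mathbf{v}_{i}(t)\bigr|^{2}}{\operatorname{Im}\lambda_{i}(t)},
\]
a purely imaginary vector along $\hat{\mathbf{r}}_{ij}$ whose direction is exactly from $\lambda_{i}$ toward $\overline{\lambda_{i}}$. Since $c_{ij}c_{ji}$ is real and positive, the ray $c_{ij}c_{ji}\,\hat{\mathbf{r}}_{ij}$ lies on the segment joining the c.c.\ pair, so Definitions~\ref{def:(Central-Force)-The} and~\ref{(Attraction-and-Repulsion)} are satisfied and the force is central and attractive.

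The main obstacle is really just bookkeeping: carefully tracking the positions of transpose, conjugate, and adjoint on the left and right eigenvectors of the c.c.\ pair, and using the reality of $\dot{M}(t)$ to recognize $c_{ij}$ as the scalar complex conjugate of $c_{ji}$. Once that identification is made, positivity of $|c_{ji}|^{2}$ combined with the purely imaginary character of $\lambda_{i}-\overline{\lambda_{i}}$ produces the claimed attractive force with no further work.
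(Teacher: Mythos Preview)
Your proposal is correct and follows essentially the same approach as the paper: isolate the $j=\bar{i}$ summand in Eq.~\ref{eq:lambda_pp_finalRAMIS-1}, use the reality of $M(t)$ (hence of $\dot M(t)$) together with the complex-conjugate pairing of eigenvectors to recognise that $c_{i\bar i}\,c_{\bar i i}=|\mathbf{u}_{i}^{T}\dot M\,\mathbf{v}_{i}|^{2}>0$, and conclude that the resulting force $-i\,|c_{i\bar i}|^{2}/\operatorname{Im}\lambda_{i}$ points from $\lambda_{i}$ toward $\overline{\lambda_{i}}$. The paper finishes with a two-case check on the sign of $\operatorname{Im}\lambda_{i}$ rather than invoking $\hat{\mathbf r}_{ij}$ directly, but the content is identical.
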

\begin{proof}
The non-real eigenvalues and eigenvectors of a real matrix come in
complex conjugate pairs \cite[Chapter 24]{Trefethen}. In Eq. \ref{eq:lambda_pp_finalRAMIS-1},
the interaction of $\overline{\lambda_{i}\left(t\right)}$ and $\lambda_{i}\left(t\right)$
is given by the term where $j=\bar{i}$ where $\lambda_{i}\left(t\right)-\overline{\lambda_{i}\left(t\right)}=2i\mbox{ }\mbox{Im}\left(\lambda_{i}\right)$
and 
\begin{eqnarray}
\frac{2\mbox{ }c_{i\mbox{ }\bar{i}}\mbox{ }c_{\bar{i}\mbox{ }i}}{\lambda_{i}\left(t\right)-\overline{\lambda_{i}\left(t\right)}} & = & -i\mbox{ }\frac{\left|c_{i\bar{\mbox{ }i}}\right|^{2}}{\mbox{Im}\left(\lambda_{i}\left(t\right)\right)}\quad.\label{eq:EigAttrac}
\end{eqnarray}

i. $\mbox{Im}\left(\lambda_{i}\right)>0$, then the right hand side
of Eq. \ref{eq:EigAttrac} is a negative imaginary number: The effect
of $\overline{\lambda_{i}}$ on $\lambda_{i}$ at time $t$ is to
push $\lambda_{i}$ downwards along the imaginary axis with a magnitude
that is inversely proportional to their distance. The constant of
proportionality is $\left|c_{i\mbox{ }\bar{i}}\right|^{2}$-- the
numerator of Eq. \ref{eq:EigAttrac}.

ii. $\mbox{Im}\left(\lambda_{i}\right)<0$, then the right hand side
of Eq. \ref{eq:EigAttrac} is a positive imaginary number: The effect
of $\overline{\lambda_{i}}$ on $\lambda_{i}$ at time $t$ is to
push $\lambda_{i}$ upward along the imaginary axis with a magnitude
that is inversely proportional to their distance with the same constant
of proportionality.
\end{proof}
Comment: Let $M$ be a fixed non-symmetric real matrix and define
$M\left(t\right)=M+t\mbox{ }\mathbb{I}$, where  $\mathbb{I}$ is
the identity matrix. The eigenvalues of $M\left(t\right)$ are those
of $M$ shifted by $t$. No attraction is expected. Indeed by Eq.
\ref{eq:orthogonality}, $c_{ii}=\mathbf{u_{i}^{T}}\left(t\right)\mathbf{v_{i}}\left(t\right)=0$.
Moreover, from Eq. \ref{eq:lambda_prime} we have $\overset{\centerdot}{\lambda_{i}}\left(t\right)=t\mbox{ }$,
i.e., a net drift in the complex plane of any eigenvalue with velocity
$t$. 
\begin{rem}
Proving that complex conjugate eigenvalues attract does \textit{not}
imply that in the long-run all the eigenvalues will necessarily be
real. There are three forces that act on any eigenvalue $\lambda_{i}$
: \end{rem}
\begin{enumerate}
\item The inertial force due to $\overset{\centerdot\centerdot}{M}\left(t\right)$. 
\item The attraction of its complex conjugate 
\item The force of the remaining $n-2$ eigenvalues. 
\end{enumerate}
The governing equation is accordingly written as
\begin{equation}
\overset{\centerdot\centerdot}{\lambda_{i}}\left(t\right)=\mathbf{u_{i}}^{*}\left(t\right)\mbox{ }\overset{\centerdot\centerdot}{M}\left(t\right)\mbox{ }\mathbf{v_{i}}\left(t\right)-i\mbox{ }\frac{\left|c_{i\bar{\mbox{ }i}}\right|^{2}}{\mbox{Im}\left(\lambda_{i}\right)}+2\sum_{j\ne\left\{ i,\bar{i}\right\} }c_{ij}c_{ji}\frac{\hat{\mathbf{r}}_{ij}}{\left|\mathbf{r}_{ij}\right|}\quad.\label{eq:KEY_Equation}
\end{equation}
Since the force law is inversely proportional to the distance of the
eigenvalues, the law of interaction needs to be logarithmic i.e.,
the potential is $V\left(\left|\mathbf{r}_{ij}\right|\right)\propto\log\left|\mathbf{r}_{ij}\right|$.
The logarithmic interaction on the plane implies a short-range force,
where the main contribution to the total force comes from the eigenvalues
that are in the vicinity. 
\begin{prop}
The real eigenvalues of $M\left(t\right)\in\mathbb{R}^{n\times n}$
interact via a central force.\end{prop}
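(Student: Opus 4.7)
The plan is a short unpacking of Definition~\ref{def:(Central-Force)-The}. Since the contribution to $\overset{\centerdot\centerdot}{\lambda_{i}}(t)$ coming from $\lambda_j$ in Eq.~\ref{eq:lambda_pp_finalRAMIS-1} is $2\,c_{ij}c_{ji}/(\lambda_i - \lambda_j)$, the force is central precisely when $f := c_{ij}c_{ji}/(\lambda_i - \lambda_j)$ is real-valued (so the force vector in $\mathbb{C}$ is a real multiple of the unit vector joining the two eigenvalues). When $\lambda_i$ and $\lambda_j$ are both real, the denominator is automatically real, so the task reduces to showing that the product $c_{ij}c_{ji}$ itself is real.

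The key step is to observe that a simple real eigenvalue of a real matrix admits real left and right eigenvectors. Under the standing simplicity assumption of Section~\ref{sub:General-dynamics-of}, the kernel of $M(t) - \lambda_i I$ is a one-dimensional subspace cut out by a real linear system, hence contains a real vector, which we take as $\mathbf{v}_i$. Applying the same reasoning to $M(t)^{T}$ produces a real $\mathbf{u}_i$, and the biorthogonality normalization Eq.~\ref{eq:orthogonality} is preserved under rescaling by a real scalar. So for any pair of real eigenvalues we may assume $\mathbf{u}_i, \mathbf{u}_j, \mathbf{v}_i, \mathbf{v}_j \in \mathbb{R}^n$.

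The conclusion is then immediate. Since $M(t) \in \mathbb{R}^{n\times n}$ for all $t$, so is $\overset{\centerdot}{M}(t)$; combined with the reality of the eigenvectors just established, $c_{ij} = \mathbf{u}_i^{T}\overset{\centerdot}{M}(t)\mathbf{v}_j \in \mathbb{R}$ and likewise $c_{ji} \in \mathbb{R}$, whence $c_{ij}c_{ji} \in \mathbb{R}$. Setting $f = c_{ij}c_{ji}/(\lambda_i - \lambda_j)$ gives a real-valued function of the eigenpairs and $\overset{\centerdot}{M}$, satisfying Definition~\ref{def:(Central-Force)-The}.

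The only subtlety, and thus the main potential obstacle, is the simplicity/non-degeneracy hypothesis: at instants $t$ where two real eigenvalues collide, the eigenvectors are no longer uniquely determined up to a real scalar, so the real-vector choice above becomes ambiguous. The statement should therefore be read at generic $t$, or extended by a limiting argument across such collisions, consistent with the ``scattering'' behavior described after Theorem~\ref{Thm:(Eigenvalue-Attraction)}.
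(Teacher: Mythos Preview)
Your proof is correct and follows essentially the same approach as the paper: both argue that simple real eigenvalues admit real eigenvectors, hence $c_{ij}$ and $c_{ji}$ are real, and then invoke Definition~\ref{def:(Central-Force)-The}. Your version is simply more explicit about why the eigenvectors can be chosen real and about the role of the simplicity hypothesis.
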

\begin{proof}
Suppose $\lambda_{i}$ and $\lambda_{j}$ are two simple real eigenvalues
then the corresponding eigenvectors can be taken to be real. Then
$c_{ij}$ and $c_{ji}$ are both real and by Definition \ref{def:(Central-Force)-The}
interact via a central force.
\end{proof}
A natural question then is: how dominant is the complex conjugate
attraction in determining the motion of any eigenvalue? One cannot
give a general answer to this question as the relevance of complex
conjugate attraction depends on the particular $M\left(t\right)$
and the particular eigenvalue considered. However, in examining Eq.
\ref{eq:KEY_Equation}, it is clear that complex conjugate pairs closest
to the real line attract strongest. We will show that ill-conditioning
enhances the complex conjugate attraction as well. This is expected
as ill-conditioning generally implies higher sensitivity to perturbations
\cite{Trefethen,TrefethenEmbree2005}. Below we provide some exact
results and demonstrations for certain general sub-manifolds of real
matrices and time evolutions that are of interest. Mathematically,
whether the second term in Eq. \ref{eq:KEY_Equation} dominates the
time evolution depends on its magnitude relative to the first derivative
and the other terms on the right hand side (i.e, third term). After
some demonstrations of deterministic evolution in the next section,
we proceed to calculate the expected values and variances about that
expectation for the first and second derivatives when the evolution
is driven by randomness. Further, we derive conditions under which
the total expected force is only due to the complex conjugate. 
\begin{rem}
When the dominant force is that of the complex conjugate attraction,
it is interesting that a matrix with a simple spectrum is forced to
form eigenvalue degeneracies even if the perturbation (or stochastic
process below) is generic. This happens momentarily when any $\lambda_{i}$
and $\overline{\lambda_{i}}$ collide on the real line, whereby standard
perturbation theory and considerations above break down and the colliding
eigenvalues generically form $2\times2$ Jordan blocks \cite[Chapter 1]{wilkinson1965algebraic}.
Following a result due to Lidskii-Vishik-Lyusternik, each of the degenerate
eigenvalues for a small $\delta t$ after gets a correction of order
$\left(\delta t\right)^{1/2}$ with a coefficient determined entirely
by specific entries of $\overset{\centerdot}{M}\left(t\right)$ \cite[See for a review]{moro1997lidskii}.
Consequently and generically these eigenvalues move in different directions.
\begin{figure}
\includegraphics[scale=0.4]{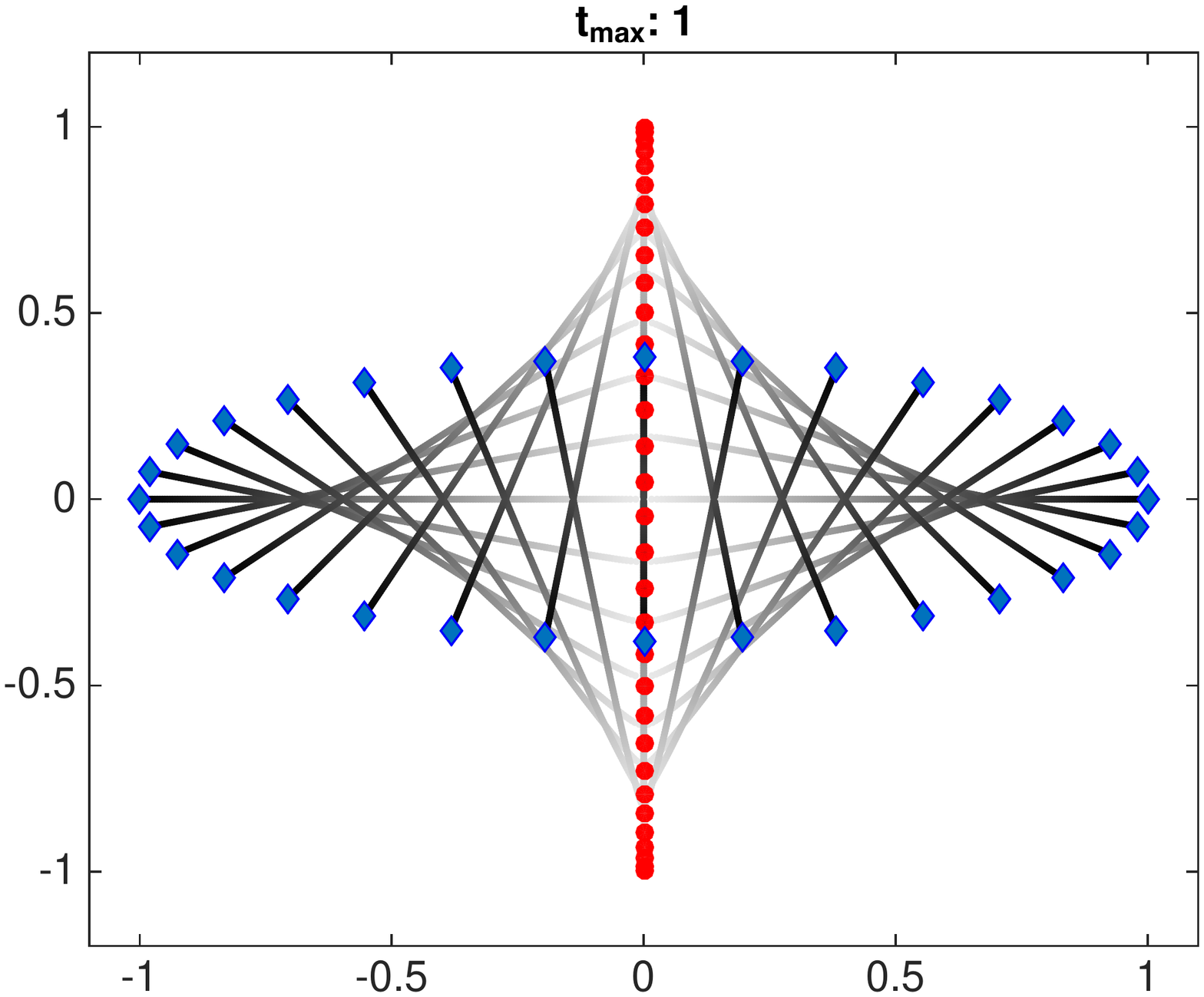}\includegraphics[scale=0.4]{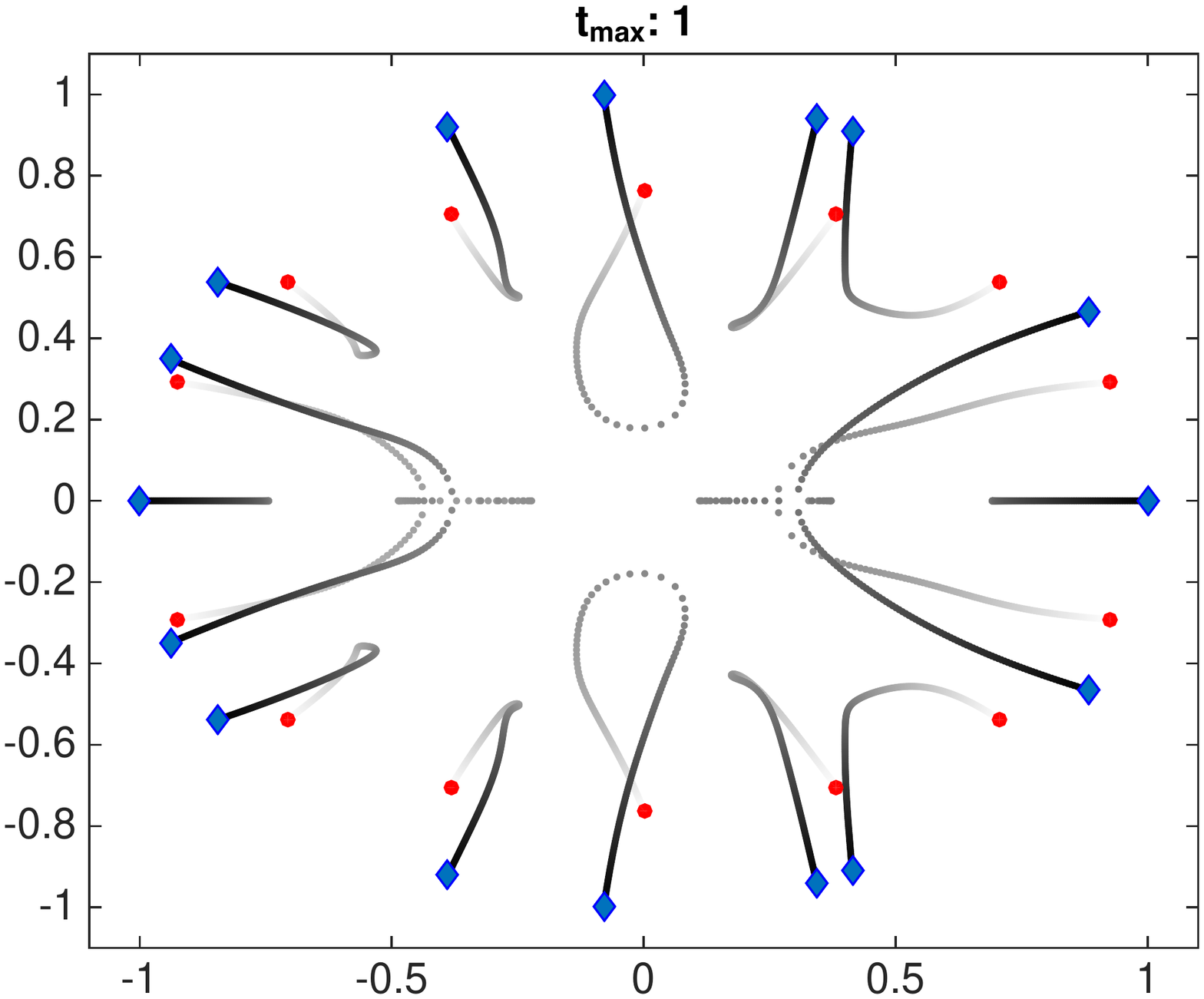}\caption{\label{fig:Example-1}Example 1 (Left): Interpolation between a purely
imaginary matrix and the Hatano-Nelson model. Example 2 (Right): Interpolation
between the Hatano-Nelson model and a random orthogonal matrix.}
\end{figure}

\end{rem}

\section{\label{sec:Random-perturbations}Applications of eigenvalue attraction}

\subsection{Smooth interpolation between fixed matrices}

As an illustration of Theorem \ref{Thm:(Eigenvalue-Attraction)},
let us take $M\left(t\right)=\left(1-t\right)M_{1}+t\mbox{ }M_{2}$,
where $M_{1}$ and $M_{2}$ are two fixed matrices and $t\in\left[0,1\right]$;
the eigenvalues of $M\left(t\right)$ interpolate between the two.
In Figs \ref{fig:Example-1} and \ref{fig:Example-3}, as before,
the eigenvalues of $M_{1}$ are shown in red filled circles and they
darken till their final position, which is the eigenvalues of $M_{2}$
shown in blue diamonds. We take the size of the matrices to be $16\times16$
so the trajectories of eigenvalues can be visually traced more easily.
We normalize all the matrices to have unit $2-$norm. The examples
shown in Figs. \ref{fig:Example-1} and \ref{fig:Example-3} are:

Example 1: $M_{1}=\left[\begin{array}{ccccc}
0 & +1\\
-1 & 0 & +1\\
 & -1 & 0 & \ddots\\
 &  & \ddots & \ddots & +1\\
 &  &  & -1 & 0
\end{array}\right]$ and $M_{2}$ is Hatano-Nelson with $g=-0.4$. Since $M_{1}=-M_{1}^{*}$,
its eigenvalues are purely imaginary. 

Example 2: $M_{1}$ is Hatano Nelson with $g=1$ and $M_{2}$ is a
random orthogonal matrix, hence all the diamonds sit on the unit circle.

Example 3: $M_{1}$ and $M_{2}$ are two real matrices whose entries
are iid drawn from a standard normal distribution. That is, $M_{1}$
and $M_{2}$ are drawn from the Ginibre ensemble. 

Example 4: $M_{1}$ is Hatano Nelson with $g=-0.3$ and $M_{2}$ is
Hatano-Nelson with $g=+0.3$, note that the spectra of $M_{1}$ and
$M_{2}$ coincide and that every one of the complex conjugate pairs
meet on the real line and walk all the way to the other side.

Example 5: $M_{1}$ and $M_{2}$ are both random orthogonal matrices.
\begin{figure}
\begin{centering}
\includegraphics[scale=0.4]{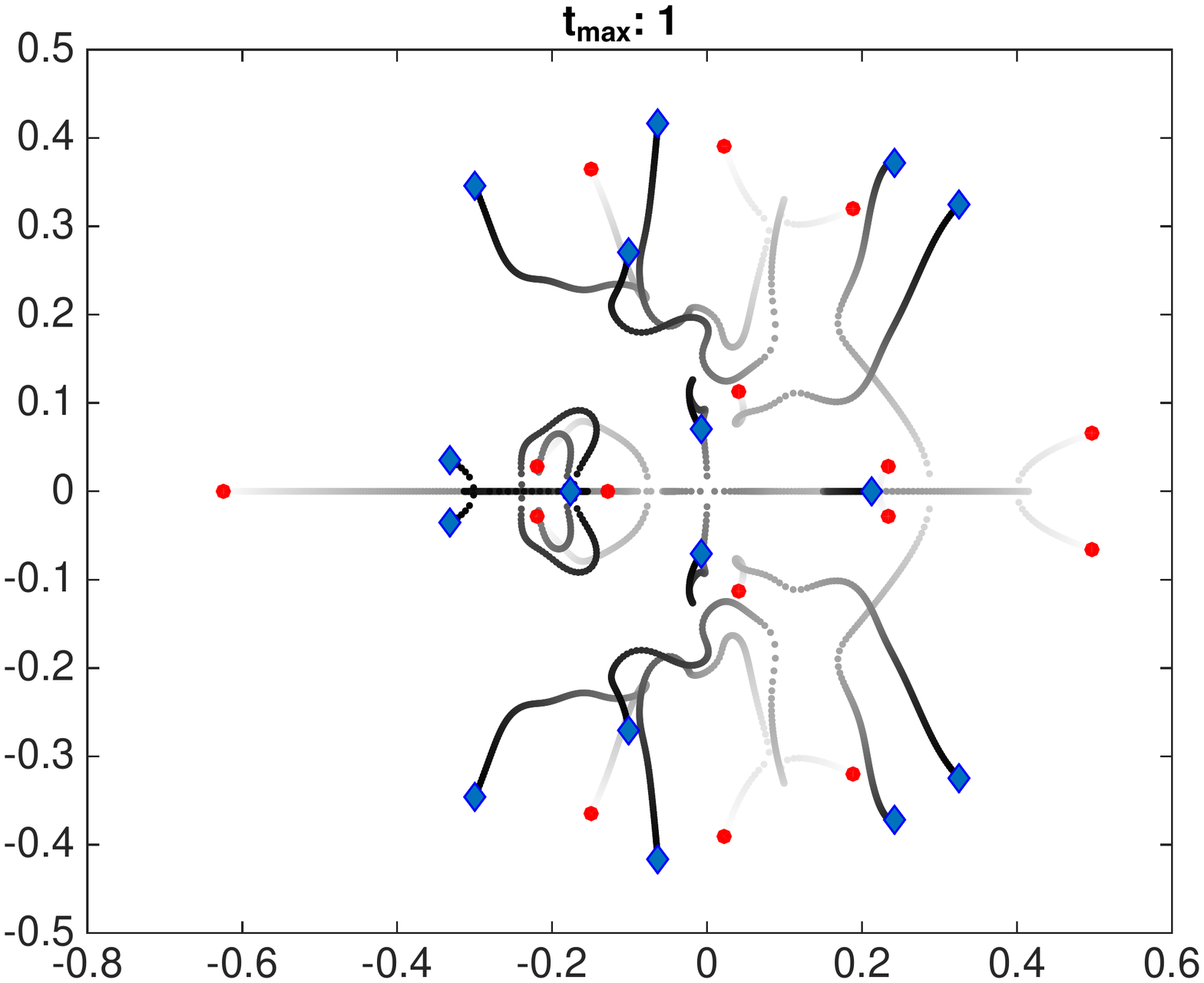}\includegraphics[scale=0.4]{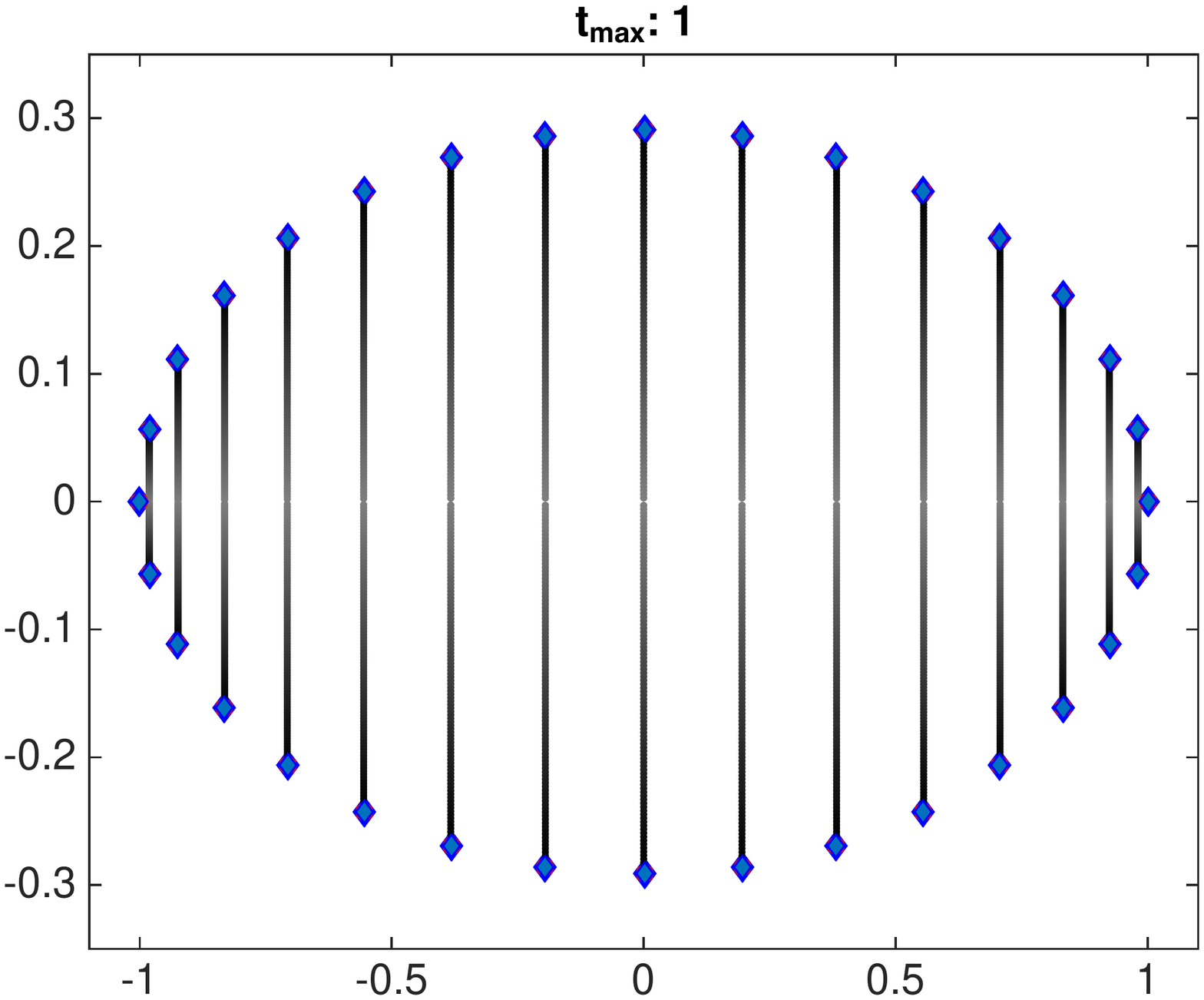}\\

\par\end{centering}

\centering{}\includegraphics[scale=0.4]{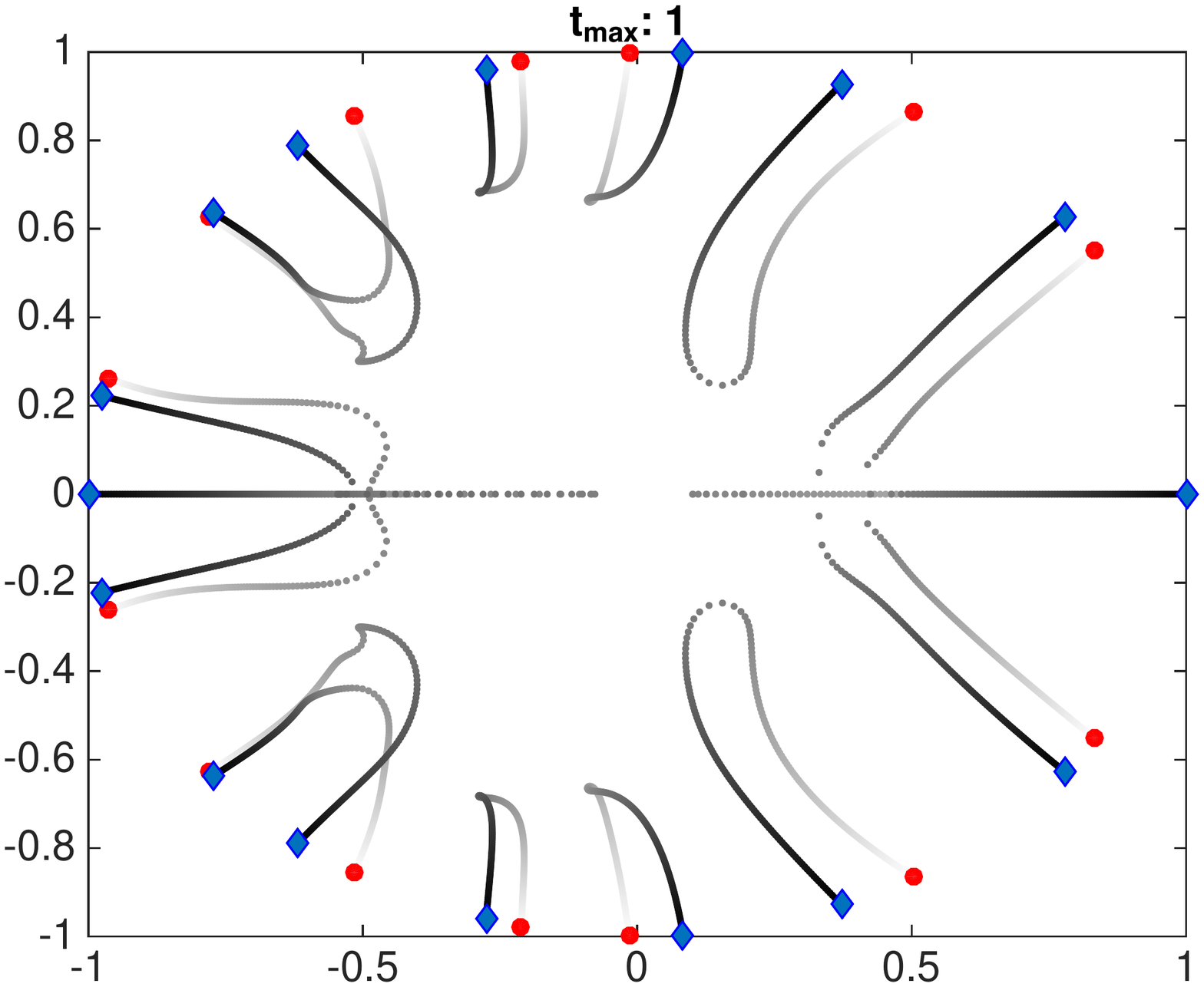}\caption{\label{fig:Example-3}Example 3 (Left): $M_{1}$ and $M_{2}$ are
drawn from the Ginibre ensemble. Example 4 (Right): $M_{1}$ and $M_{2}$
are the Hatano-Nelson model with $g=-0.3$ and $g=+0.3$ respectively.
Example 5 (Bottom): $M_{1}$ and $M_{2}$ are two independent random
orthogonal matrices.}
 
\end{figure}

In all the examples shown here, we expect the complex conjugate attraction
to be more dominant among other forces for pairs closest to the real
line. Note that these complex conjugate pairs first collide and then
they move towards their final positions (shown in blue diamonds).
A nice corollary is that a real and isolated eigenvalue, will remain
real during the time evolution. Since the non-real eigenvalues of
$M\left(t\right)$ come in complex conjugate pairs, a real eigenvalue
cannot move off the real axis. Leaving the real axis can only happen
if two eigenvalues collide on the real line and scatter off into the
complex plane and remain mirror images (i.e., complex conjugate).
For example such collisions are seen in Examples 2 and 3, where in
the latter, one can see a cascade of this. That is a complex conjugate
pair become real and while moving on the real line a new collision
takes the eigenvalues off the real line for a short while, but attraction
pulls them back in again for a second collision.

\subsection{\label{sub:Random-perturbations-of}Random perturbations of a fixed
matrix}

In this section we focus on perturbations of a fixed matrix which
come up often in sciences and engineering, where $M\left(t\right)$
has the form
\begin{equation}
M\left(t\right)=M+\delta t\mbox{ }P\label{eq:LineMatrices}
\end{equation}
where $M$ and $P$ are $n\times n$ real matrices, $M$ is fixed,
$P$ is a random matrix whose entries are independently and identically
distributed (iid) with zero mean and finite fourth moments, and $\delta t$
is a small real parameter such that $\left|\delta t\right|\left\Vert P\right\Vert \ll\left\Vert M\right\Vert $;
i.e., $\delta t\mbox{ }P$ is a perturbation to $M$. The discussion
of this section is important for the formulation and analysis of stochastic
dynamics of the eigenvalues in Section \ref{sec:Stochatic-dynamics}.
Recall that the velocity and acceleration of an eigenvalue are
\begin{eqnarray*}
\overset{\centerdot}{\lambda_{i}}\left(t\right) & = & c_{ii}.\\
\overset{\centerdot\centerdot}{\lambda_{i}}\left(t\right) & = & -i\mbox{ }\frac{\left|c_{i\bar{\mbox{ }i}}\right|^{2}}{\mbox{Im}\left(\lambda_{i}\right)}+2\sum_{j\ne\left\{ i,\bar{i}\right\} }c_{ij}c_{ji}\frac{\hat{\mathbf{r}}_{ij}}{\left|\mathbf{r}_{ij}\right|}
\end{eqnarray*}
where $c_{ij}\equiv\mathbf{u_{i}^{*}}\left(t\right)\mbox{ }P\mbox{ }\mathbf{v_{j}}\left(t\right)$
and $\frac{\hat{\mathbf{r}}_{ij}}{\left|\mathbf{r}_{ij}\right|}=\frac{1}{\lambda_{i}-\lambda_{j}}$
as before.

\subsubsection{First variation}

The expected value, with respect to entries of $P$, of the first
variation is

\[
\mathbb{E}[\overset{\centerdot}{\lambda_{i}}\left(t\right)]=\mathbb{E}[P_{ab}]\mbox{ }\overline{u_{i}}^{a}v_{i}^{b}=0
\]
where for notational convenience from now on we drop the dependence
of the eigenpairs on $t$ and sum over the repeated indices that label
the components of eigenvectors. We comment that $\mathbb{E}[\overset{\centerdot}{\lambda_{i}}\left(t\right)]=0$
even when entries of $P$ are not identically distributed.

The variance is $\mathbb{E}[|\overset{\centerdot}{\lambda_{i}}|^{2}]-|\mathbb{E}[\overset{\centerdot}{\lambda_{i}}]|^{2}$.
From above $|\overset{\centerdot}{\lambda_{i}}|^{2}=P_{ab}P_{cd}\bar{u}_{i}^{a}v_{i}^{b}u_{i}^{c}\bar{v}_{i}^{d}$
and $\mathbb{E}[\overset{\centerdot}{\lambda_{i}}]=0$. In general,
the variance of the first variation denoted by $\sigma_{i,1}^{2}$
is $\mathbb{E}[|\overset{\centerdot}{\lambda_{i}}|^{2}]=\mathbb{E}[p^{2}]\bar{u}_{i}^{a}v_{i}^{b}u_{i}^{a}\bar{v}_{i}^{b}=\mathbb{E}[p^{2}]\left\Vert \mathbf{u}_{i}\right\Vert _{2}^{2}$,
where $p$ denotes any entry of the matrix $P$ whose entries are
iid. Moreover, if $M$ is a normal matrix then $\left\Vert \mathbf{u}_{i}\right\Vert _{2}^{2}=\left\Vert \mathbf{v}_{i}\right\Vert _{2}^{2}=1$.

Next suppose that $P=\mbox{diag}\left(p_{1},p_{2},\dots,p_{n}\right)$
with $\mathbb{E}\left[p_{i}\right]=0$. Then $|\overset{\centerdot}{\lambda_{i}}|^{2}=p_{a}p_{b}\bar{u}_{i}^{a}v_{i}^{a}u_{i}^{b}\bar{v}_{i}^{b}$.
Since $\mathbf{u}_{i}^{*}\mathbf{v}_{i}=1$, for diagonal perturbation
we have $\mathbb{E}[|\overset{\centerdot}{\lambda_{i}}|^{2}]=\mathbb{E}[p^{2}]$. 

The special cases are worth summarizing

\begin{center}
\begin{tabular}{|c|c|c|}
\hline 
$M$ & $P$ & $\sigma_{i,1}^{2}$\tabularnewline
\hline 
General & General & $\mathbb{E}[p^{2}]\left\Vert \mathbf{u}_{i}\right\Vert _{2}^{2}$\tabularnewline
\hline 
General & Diagonal & $\mathbb{E}[p^{2}]$\tabularnewline
\hline 
Normal & General & $\mathbb{E}[p^{2}]$\tabularnewline
\hline 
\end{tabular}
\par\end{center}

Now suppose that $M$ is a \textit{circulant} matrix \cite{GrayReview}
and $P$ is diagonal. In the case of circulant matrices $\mathbf{v}_{j}^{T}=\frac{1}{\sqrt{n}}\left[1,\omega_{j},\omega_{j}^{2},\dots,\omega_{j}^{n-1}\right]$
with $\omega_{j}=\exp\left(2\pi ij/n\right)$. In this case (note
that below no expectation is taken)
\begin{equation}
\overset{\centerdot}{\lambda_{i}}=\overline{v_{i}}^{a}p_{a}v_{i}^{a}=p_{a}\left|v_{i}^{a}\right|^{2}=\frac{1}{n}\sum_{a}p_{a}.\qquad\mbox{if the matrix is Circulant}.\label{eq:lambda_dot_Circulant}
\end{equation}

This expression is the empirical mean of the diagonal entries of $P$,
which for large $n$ tends to zero. Therefore, the dynamics of the
eigenvalues are primarily governed by the second variation.

\subsubsection{Second variation}

When $M\left(t\right)$ is the pencil of matrices given by Eq. \ref{eq:LineMatrices},
$\overset{\centerdot\centerdot}{M}\left(t\right)=0$ , $\overset{\centerdot}{M}\left(t\right)=P$
and

\begin{eqnarray}
\overset{\centerdot\centerdot}{\lambda_{i}} & = & -i\frac{\left|\mathbf{u_{i}^{T}}\mbox{ }P\mbox{ }\mathbf{v_{i}}\right|^{2}}{2\mbox{ Im}\left(\lambda_{i}\right)}+\sum_{j\ne\left\{ i,\bar{i}\right\} }\frac{\left[\mathbf{u_{i}^{*}}\mbox{ }P\mbox{ }\mathbf{v_{j}}\right]\left[\mathbf{u_{j}^{*}}\mbox{ }P\mbox{ }\mathbf{v_{i}}\right]}{\lambda_{i}-\lambda_{j}}.\label{eq:lambda_pp_refined}
\end{eqnarray}

To quantify the dominance of the attractive force, we calculate the
expected value and variance of forces on $\lambda_{i}$ excluding
$\bar{\lambda_{i}}$. The general form of the expected force of the
remaining $n-2$ eigenvalues on $\lambda_{i}$ (excluding its complex
conjugate); i.e. the sum in the foregoing equation or the sum in Eq.
\ref{eq:KEY_Equation} is

\begin{eqnarray}
\mathbb{E}\sum_{j\ne\left\{ i,\bar{i}\right\} }\frac{\left[\mathbf{u_{i}^{*}}\mbox{ }P\mbox{ }\mathbf{v_{j}}\right]\left[\mathbf{u_{j}^{*}}\mbox{ }P\mbox{ }\mathbf{v_{i}}\right]}{\lambda_{i}-\lambda_{j}} & = & \mathbb{E}\sum_{j\ne\left\{ i,\bar{i}\right\} }\frac{\left[\overline{u_{i}}^{a}p_{ab}v_{j}^{b}\right]\left[\overline{u_{j}}^{c}p_{cd}v_{i}^{d}\right]}{\lambda_{i}-\lambda_{j}}=\sum_{j\ne\left\{ i,\bar{i}\right\} }\frac{\mathbb{E}\left[p_{ab}p_{cd}\right]\left[\overline{u_{i}}^{a}v_{j}^{b}\overline{u_{j}}^{c}v_{i}^{d}\right]}{\lambda_{i}-\lambda_{j}}\nonumber \\
 & = & \mathbb{E}[p^{2}]\sum_{j\ne\left\{ i,\bar{i}\right\} }\frac{\mathbf{\left(v_{i}^{T}v_{j}\right)}\mathbf{\left(u_{i}^{*}\overline{u_{j}}\right)}}{\lambda_{i}-\lambda_{j}},\label{eq:Expected_secondVar_General}
\end{eqnarray}
where $p$ denotes any entry of $P$ and $\mathbb{E}\left[p_{ab}p_{cd}\right]=\mathbb{E}[p^{2}]\delta_{ac}\delta_{bd}$.
Although, the attraction is always present, we remark that similar
treatment of the first term in Eq. \ref{eq:lambda_pp_refined} leads
to 
\begin{equation}
\mathbb{E}[\overset{\centerdot\centerdot}{\lambda_{i}}]=-i\frac{\mathbb{E}[p^{2}]\left\Vert \mathbf{u_{i}}\right\Vert _{2}^{2}}{\mbox{Im}\left(\lambda_{i}\right)}+\mathbb{E}[p^{2}]\sum_{j\ne\left\{ i,\bar{i}\right\} }\frac{\mathbf{\left(v_{i}^{T}v_{j}\right)}\mathbf{\left(u_{i}^{*}\overline{u_{j}}\right)}}{\lambda_{i}-\lambda_{j}}\quad.\label{eq:Expected_force}
\end{equation}

We now calculate the variance of the second term on the right hand
side of Eq. \ref{eq:Expected_force}, denoted by $\sigma_{i,2}^{2}$,
in Eq. \ref{eq:lambda_pp_refined}
\begin{figure}
\begin{centering}
\includegraphics[scale=0.35]{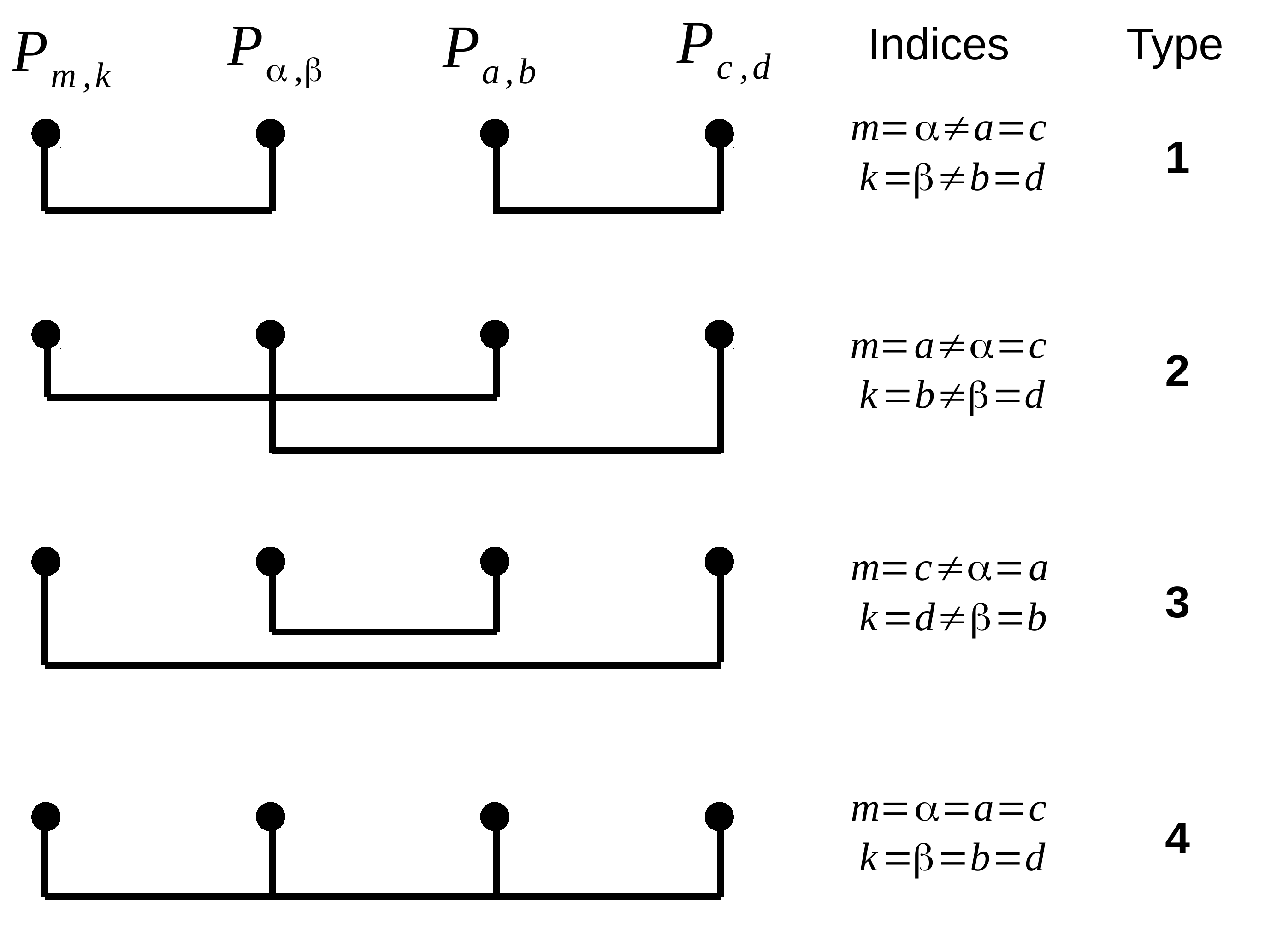}
\par\end{centering}

\caption{\label{fig:Non-zero-in-Expectation}Non-zero contribution in $\mathbb{E}\left\{ P_{mk}P_{\alpha\beta}P_{ab}P_{cd}\mbox{ }\right\} $ }
\end{figure}
 
\begin{eqnarray}
\mathbb{E}[\sigma_{i,2}^{2}] & = & \mathbb{E}\left\{ \left|\sum_{j\ne\left\{ i,\bar{i}\right\} }\frac{\left[\mathbf{u_{i}^{*}}\mbox{ }P\mbox{ }\mathbf{v_{j}}\right]\left[\mathbf{u_{j}^{*}}\mbox{ }P\mbox{ }\mathbf{v_{i}}\right]}{\lambda_{i}-\lambda_{j}}\right|^{2}\right\} -\left|\mathbb{E}\sum_{j\ne\left\{ i,\bar{i}\right\} }\frac{\left[\mathbf{u_{i}^{*}}\mbox{ }P\mbox{ }\mathbf{v_{j}}\right]\left[\mathbf{u_{j}^{*}}\mbox{ }P\mbox{ }\mathbf{v_{i}}\right]}{\lambda_{i}-\lambda_{j}}\right|^{2}\nonumber \\
 & = & \sum_{j,\ell\ne\left\{ i,\bar{i}\right\} }\frac{\mathbb{E}\left[p_{mk}p_{\alpha\beta}p_{ab}p_{cd}\right]\bar{u_{i}}^{m}v_{j}^{k}\bar{u}_{j}^{\alpha}v_{i}^{\beta}u_{i}^{a}\bar{v}_{\ell}^{b}u_{\ell}^{c}\bar{v}_{i}^{d}}{\left(\lambda_{i}-\lambda_{j}\right)\left(\overline{\lambda_{i}}-\overline{\lambda_{\ell}}\right)}-\mathbb{E}^{2}[p^{2}]\left|\sum_{j\ne\left\{ i,\bar{i}\right\} }\frac{\mathbf{\left(v_{i}^{T}v_{j}\right)}\mathbf{\left(u_{i}^{*}\overline{u_{j}}\right)}}{\lambda_{i}-\lambda_{j}}\right|^{2}.\label{eq:kappa_sqr}
\end{eqnarray}
There are potentially four nonzero contribution to the first sum shown
in Fig. \ref{fig:Non-zero-in-Expectation}. We proceed to calculate
them one by one.\\

\begin{flushleft}
\textbf{Type 1 :} The contribution of this type to Eq. \ref{eq:kappa_sqr}
is 
\begin{eqnarray*}
\mathbb{E}^{2}[p^{2}]\sum_{j,\ell\ne\left\{ i,\bar{i}\right\} }\frac{\bar{u_{i}}^{m}v_{j}^{k}\bar{u}_{j}^{m}v_{i}^{k}u_{i}^{a}\bar{v}_{\ell}^{b}u_{\ell}^{a}\bar{v}_{i}^{b}}{\left(\lambda_{i}-\lambda_{j}\right)\left(\overline{\lambda_{i}}-\overline{\lambda_{\ell}}\right)} & = & \mathbb{E}^{2}[p^{2}]\sum_{j,\ell\ne\left\{ i,\bar{i}\right\} }\frac{\left(\mathbf{u}_{i}^{*}\bar{\mathbf{u}}_{j}\right)\left(\mathbf{v}_{j}^{T}\mathbf{v}_{i}\right)\left(\mathbf{u}_{i}^{T}\mathbf{u}_{\ell}\right)\left(\mathbf{v}_{\ell}^{*}\bar{\mathbf{v}}_{i}\right)}{\left(\lambda_{i}-\lambda_{j}\right)\left(\overline{\lambda_{i}}-\overline{\lambda_{\ell}}\right)}\\
 & = & \mathbb{E}^{2}[p^{2}]\left|\sum_{\ell\ne\left\{ i,\bar{i}\right\} }\frac{\left(\mathbf{v}_{\ell}^{T}\mathbf{v}_{i}\right)\left(\mathbf{u}_{i}^{*}\bar{\mathbf{u}}_{\ell}\right)}{\lambda_{i}-\lambda_{\ell}}\right|^{2}\quad.
\end{eqnarray*}
where the sum over $j$ is the complex conjugate of the sum over $\ell$.
Note that Type 1 cancels the second term in Eq. \ref{eq:kappa_sqr}.\\

\par\end{flushleft}

\begin{flushleft}
\textbf{Type 2 :} Since $\left\Vert \mathbf{v}_{i}\right\Vert _{2}^{2}=1$,
the contribution of this type to Eq. \ref{eq:kappa_sqr} is 
\par\end{flushleft}

\[
\mathbb{E}^{2}[p^{2}]\sum_{j,\ell\ne\left\{ i,\bar{i}\right\} }\frac{\bar{u_{i}}^{a}v_{j}^{b}\bar{u}_{j}^{c}v_{i}^{d}u_{i}^{a}\bar{v}_{\ell}^{b}u_{\ell}^{c}\bar{v}_{i}^{d}}{\left(\lambda_{i}-\lambda_{j}\right)\left(\overline{\lambda_{i}}-\overline{\lambda_{\ell}}\right)}=\mathbb{E}^{2}[p^{2}]\left\Vert \mathbf{u}_{i}\right\Vert _{2}^{2}\sum_{j,\ell\ne\left\{ i,\bar{i}\right\} }\frac{\left(\mathbf{v}_{\ell}^{*}\mathbf{v}_{j}\right)\left(\mathbf{u}_{j}^{*}\mathbf{u}_{\ell}\right)}{\left(\lambda_{i}-\lambda_{j}\right)\left(\overline{\lambda_{i}}-\overline{\lambda_{\ell}}\right)}\quad.
\]

\begin{flushleft}
\textbf{Type 3 :} Similarly to Type 1, the contribution of this type
to Eq. \ref{eq:kappa_sqr} is calculated to be
\par\end{flushleft}

\begin{eqnarray*}
\mathbb{E}^{2}[p^{2}]\sum_{j,\ell\ne\left\{ i,\bar{i}\right\} }\frac{\left(\mathbf{u}_{j}^{*}\mathbf{u}_{i}\right)\left(\mathbf{u}_{i}^{*}\mathbf{u}_{\ell}\right)\left(\mathbf{v}_{\ell}^{*}\mathbf{v}_{i}\right)\left(\mathbf{v}_{i}^{*}\mathbf{v}_{j}\right)}{\left(\lambda_{i}-\lambda_{j}\right)\left(\overline{\lambda_{i}}-\overline{\lambda_{\ell}}\right)} & = & \mathbb{E}^{2}[p^{2}]\left|\sum_{\ell\ne\left\{ i,\bar{i}\right\} }\frac{\left(\mathbf{u}_{\ell}^{*}\mathbf{u}_{i}\right)\left(\mathbf{v}_{i}^{*}\mathbf{v}_{\ell}\right)}{\left(\lambda_{i}-\lambda_{\ell}\right)}\right|^{2}\quad.
\end{eqnarray*}

\begin{flushleft}
\textbf{Type 4 :} The contribution of this type to Eq. \ref{eq:kappa_sqr}
is 
\[
\mathbb{E}[p^{4}]\sum_{j,\ell\ne\left\{ i,\bar{i}\right\} }\frac{\bar{u_{i}}^{a}v_{j}^{b}\bar{u}_{j}^{a}v_{i}^{b}u_{i}^{a}\bar{v}_{\ell}^{b}u_{\ell}^{a}\bar{v}_{i}^{b}}{\left(\lambda_{i}-\lambda_{j}\right)\left(\overline{\lambda_{i}}-\overline{\lambda_{\ell}}\right)}=\mathbb{E}[p^{4}]\sum_{j,\ell\ne\left\{ i,\bar{i}\right\} }\frac{\left|u_{i}^{a}\right|^{2}\bar{u}_{j}^{a}u_{\ell}^{a}\left|v_{i}^{b}\right|^{2}v_{j}^{b}\bar{v}_{\ell}^{b}}{\left(\lambda_{i}-\lambda_{j}\right)\left(\overline{\lambda_{i}}-\overline{\lambda_{\ell}}\right)}\quad.
\]
We conclude that Eq. \ref{eq:kappa_sqr} can be written as 
\begin{eqnarray}
\mathbb{E}\left[\sigma_{i,2}^{2}\right] & = & \mathbb{E}^{2}[p^{2}]\left\{ \left\Vert \mathbf{u}_{i}\right\Vert _{2}^{2}\sum_{j,\ell\ne\left\{ i,\bar{i}\right\} }\frac{\left(\mathbf{v}_{\ell}^{*}\mathbf{v}_{j}\right)\left(\mathbf{u}_{j}^{*}\mathbf{u}_{\ell}\right)}{\left(\lambda_{i}-\lambda_{j}\right)\left(\overline{\lambda_{i}}-\overline{\lambda_{\ell}}\right)}+\left|\sum_{\ell\ne\left\{ i,\bar{i}\right\} }\frac{\left(\mathbf{u}_{\ell}^{*}\mathbf{u}_{i}\right)\left(\mathbf{v}_{i}^{*}\mathbf{v}_{\ell}\right)}{\left(\lambda_{i}-\lambda_{\ell}\right)}\right|^{2}\right\} \label{eq:Expected_Kappa_Final}\\
 & + & \mathbb{E}[p^{4}]\sum_{j,\ell\ne\left\{ i,\bar{i}\right\} }\frac{\left|u_{i}^{a}\right|^{2}\bar{u}_{j}^{a}u_{\ell}^{a}\left|v_{i}^{b}\right|^{2}v_{j}^{b}\bar{v}_{\ell}^{b}}{\left(\lambda_{i}-\lambda_{j}\right)\left(\overline{\lambda_{i}}-\overline{\lambda_{\ell}}\right)}\quad.\nonumber 
\end{eqnarray}

\par\end{flushleft}

The derivations of this section are purely theoretical. In practice,
actual estimates of Eq. \ref{eq:Expected_Kappa_Final} would depend
on the particular $M\left(t\right)$; i.e., $P$ and the eigenpairs
of $M$. There are universality results on the eigenvectors of generic
Hermitian matrices such as generalized Wigner matrices \cite{bourgade2013eigenvector}
but not nearly as much is known for generic matrices. Once an estimate
is found, convergence to the expected value of the other forces can
be proved using standard techniques such as Markov's inequality. Below
we discuss special cases that are of theoretical and applied interest.

\subsection{\label{sec:CirculentANDDiagonal} Normal and circulant matrices}

Now let us confine to the case where $M$ in Eq. \ref{eq:LineMatrices}
is a normal matrix , i.e., unitary diagonalizable \cite{Trefethen}. 
\begin{cor}
\label{cor:M_Normal}Suppose $M$ is a normal matrix and $P$ is a
random matrix with entries that are iid and have zero mean. Then the
total expected force on any eigenvalue is only due to its complex
conjugate.\end{cor}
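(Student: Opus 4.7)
The plan is to start from the decomposition of the expected acceleration given in Eq.~\ref{eq:Expected_force}, which splits into the complex conjugate attraction term plus a sum over the remaining $n-2$ eigenvalues,
\[
\mathbb{E}[\overset{\centerdot\centerdot}{\lambda_{i}}] \;=\; -i\,\frac{\mathbb{E}[p^{2}]\,\|\mathbf{u}_{i}\|_{2}^{2}}{\operatorname{Im}(\lambda_{i})} \;+\; \mathbb{E}[p^{2}]\sum_{j\ne\{i,\bar{i}\}}\frac{(\mathbf{v}_{i}^{T}\mathbf{v}_{j})(\mathbf{u}_{i}^{*}\overline{\mathbf{u}_{j}})}{\lambda_{i}-\lambda_{j}}.
\]
Since the first term is precisely the expected complex conjugate attraction given in Eq.~\ref{eq:FinalREsultiid}, it suffices to prove that for a real normal $M$ the sum on the right vanishes identically, so that no other eigenvalue contributes to the expected force.

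First I would invoke the two defining structural features of a real normal matrix: (i) the left and right eigenvectors coincide, $\mathbf{u}_{k}=\mathbf{v}_{k}$, with $\{\mathbf{v}_{k}\}$ an orthonormal basis in the Hermitian inner product, $\mathbf{v}_{i}^{*}\mathbf{v}_{j}=\delta_{ij}$; and (ii) because $M$ is real, complex conjugation permutes the eigenvectors as $\overline{\mathbf{v}_{j}}=\mathbf{v}_{\bar{j}}$, where $\bar{j}$ denotes the index of the eigenvector paired with $\overline{\lambda_{j}}$ (with $\bar{j}=j$ when $\lambda_{j}\in\mathbb{R}$). Combining these, I rewrite the transpose inner products that appear in the numerator without conjugation as Hermitian inner products against the conjugate-partner vector,
\[
\mathbf{v}_{i}^{T}\mathbf{v}_{j}=\mathbf{v}_{i}^{*}\overline{\mathbf{v}_{j}}=\mathbf{v}_{i}^{*}\mathbf{v}_{\bar{j}}=\delta_{i\bar{j}}, \qquad \mathbf{u}_{i}^{*}\overline{\mathbf{u}_{j}}=\mathbf{v}_{i}^{*}\mathbf{v}_{\bar{j}}=\delta_{i\bar{j}}.
\]

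Substituting these into the sum in Eq.~\ref{eq:Expected_force}, each term is proportional to $\delta_{i\bar{j}}^{2}$, which is nonzero only when $j=\bar{i}$. But $j=\bar{i}$ is explicitly excluded from the summation, so the entire sum collapses to zero. Consequently the only surviving contribution is the first term, establishing the corollary: for any eigenvalue of a real normal matrix perturbed by an iid zero-mean $P$, the expected force is exactly the complex conjugate attraction of Theorem~\ref{Thm:(Eigenvalue-Attraction)}, and for real eigenvalues (where no complex conjugate partner exists) the expected force vanishes outright.

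I do not anticipate a real obstacle here; the proof is a short computation driven by the unitary orthogonality of normal eigenvectors and the real-matrix symmetry $\overline{\mathbf{v}_{j}}=\mathbf{v}_{\bar{j}}$. The one subtlety worth flagging is the distinction between the Hermitian pairing $\mathbf{v}_{i}^{*}\mathbf{v}_{j}$ and the bilinear pairing $\mathbf{v}_{i}^{T}\mathbf{v}_{j}$: normality alone guarantees $\delta_{ij}$ for the former, and it is the combined use of normality and reality that converts $\mathbf{v}_{i}^{T}\mathbf{v}_{j}$ into $\delta_{i\bar{j}}$, thereby eliminating precisely the $j=\bar{i}$ coupling that has already been pulled out as the complex conjugate attraction term.
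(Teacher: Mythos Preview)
Your proposal is correct and follows the same approach as the paper: both show that the sum in Eq.~\ref{eq:Expected_force} vanishes because $\mathbf{v}_{i}^{T}\mathbf{v}_{j}=0$ whenever $j\notin\{i,\bar{i}\}$ for a real normal matrix, leaving only the complex conjugate term. You are in fact more explicit than the paper about the key subtlety---that reality of $M$ is needed to convert the bilinear pairing $\mathbf{v}_{i}^{T}\mathbf{v}_{j}$ into the Hermitian pairing $\mathbf{v}_{i}^{*}\mathbf{v}_{\bar{j}}=\delta_{i\bar{j}}$---whereas the paper's proof simply invokes ``orthonormality of the eigenvectors'' at that step.
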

\begin{proof}
If $M$ is normal, its eigenvectors form an orthonormal basis for
$\mathbb{C}^{n}$ and $\mathbf{u}_{i}=\mathbf{v}_{i}$ with $\mathbf{v_{i}^{*}\mathbf{v_{j}}}=\delta_{ij}$.
Eq. \ref{eq:Expected_secondVar_General} gives the expected force
on $\lambda_{i}$ by all other eigenvalues excluding its complex conjugate,
which now reads

\[
\mathbb{E}[p^{2}]\sum_{j\ne\left\{ i,\bar{i}\right\} }\frac{\mathbf{\left|v_{i}^{T}v_{j}\right|^{2}}}{\lambda_{i}-\lambda_{j}}=0
\]
since the sum over $j$ excludes $\bar{i}$, by orthonormality of
the eigenvectors $\mathbf{v_{i}^{T}v_{j}}=0$ . Hence,
\begin{equation}
\mathbb{E}[\overset{\centerdot\centerdot}{\lambda_{i}}]=-i\frac{\mathbb{E}[p^{2}]}{\mbox{Im}\left(\lambda_{i}\right)}\qquad\mbox{if the matrix is Normal .}\label{eq:TotalNormalForce}
\end{equation}
The {\it total} expected force is only due to the complex conjugate 
\end{proof}
In this case the variance also takes on a simpler expression. In Eq.
\ref{eq:Expected_Kappa_Final} the first and third sums vanish because
$\ell\ne\left\{ i,\bar{i}\right\} $ and therefore $\mathbf{v}_{\ell}^{T}\mathbf{v}_{i}=0$
and $\mathbf{v}_{i}^{*}\mathbf{v}_{\ell}=0$. Moreover, in the sum
in Eq. \ref{eq:Expected_Kappa_Final}, the sum over $j$ collapses
because $\mathbf{v}_{\ell}^{*}\mathbf{v}_{j}=\delta_{\ell j}$ and
$\left\Vert \mathbf{u}_{i}\right\Vert _{2}^{2}=1$. 

Type 4 can be simplified as follows

\begin{eqnarray}
\sum_{j\ne\left\{ i,\bar{i}\right\} }\sum_{\ell\ne\left\{ i,\bar{i}\right\} }\frac{\mathbb{E}[p_{ab}^{4}]\mbox{ }\bar{v_{i}}^{a}v_{j}^{b}\bar{v}_{j}^{a}v_{i}^{b}\mbox{ }\bar{v_{i}}^{a}v_{\ell}^{b}\overline{v}_{\ell}^{a}v_{i}^{b}}{\left(\lambda_{i}-\lambda_{j}\right)\overline{\left(\lambda_{i}-\lambda_{\ell}\right)}} & = & \mathbb{E}[p^{4}]\sum_{j\ne\left\{ i,\bar{i}\right\} }\sum_{\ell\ne\left\{ i,\bar{i}\right\} }\frac{\left|v_{i}^{a}\right|^{2}\left|v_{i}^{b}\right|^{2}\bar{v}_{j}^{a}v_{\ell}^{a}\mbox{ }v_{j}^{b}\mbox{ }\overline{v}_{\ell}^{b}}{\left(\lambda_{i}-\lambda_{j}\right)\overline{\left(\lambda_{i}-\lambda_{\ell}\right)}}\label{eq:Type4_1}\\
 & \le & \mathbb{E}[p^{4}]\left|\sum_{\ell\ne\left\{ i,\bar{i}\right\} }\frac{1}{\lambda_{i}-\lambda_{\ell}}\right|^{2}\mbox{ }.\nonumber 
\end{eqnarray}
because $\sum_{ab}\left|v_{i}^{a}\right|^{2}\left|v_{i}^{b}\right|^{2}\bar{v}_{j}^{a}v_{\ell}^{a}\mbox{ }v_{j}^{b}\mbox{ }\overline{v}_{\ell}^{b}=\left|\sum_{a}\left|v_{i}^{a}\right|^{2}\bar{v}_{j}^{a}v_{\ell}^{a}\right|^{2}\mbox{ }\le\sum_{a}\left|\bar{v}_{j}^{a}v_{\ell}^{a}\right|^{2}\le1$. 

Adding the contribution of the four Types in the case where \textit{the
matrix is normal}, we get
\begin{equation}
\mathbb{E}[\sigma_{i,2}^{2}]\le\mathbb{E}[p^{2}]\sum_{\ell\ne\left\{ i,\bar{i}\right\} }\frac{1\mbox{ }}{\left|\lambda_{i}-\lambda_{\ell}\right|^{2}}+\mathbb{E}[p^{4}]\left|\sum_{\ell\ne\left\{ i,\bar{i}\right\} }\frac{1}{\lambda_{i}-\lambda_{\ell}}\right|^{2}.\label{eq:Normal_Variance_final}
\end{equation}
where the inequality comes about from the estimate of the Type 4 terms. 

How are we to visualize the force between any two eigenvalues? In
general one can consider any eigenvalue, $\lambda_{i}$ as a point
in the complex plane with coordinates $\left(\mbox{Re}\lambda_{i},i\mbox{Im}\lambda_{i}\right)$
and the unit vector $\hat{\mathbf{r}}_{ij}$ , given by Eq. \ref{eq:centralForce},
as one proportional to $\left(\mbox{Re}\left(\lambda_{i}-\lambda_{j}\right),-i\mbox{Im}\left(\lambda_{i}-\lambda_{j}\right)\right)$.
Looking at the Eq. \ref{eq:lambda_pp_finalRAMIS-1}, we note that
the force exerted on $\lambda_{i}$ from $\lambda_{j}$ is in the
direction of $c_{ij}c_{ji}\hat{\mathbf{r}_{ij}}$ (recall that $c_{ij}=\mathbf{u_{i}^{*}}\left(t\right)\mbox{ }P\mbox{ }\mathbf{v_{j}}\left(t\right)$).
Even in the case of normal matrices, unlike Hermitian matrices, this
force is not in general central (attractive or repulsive).

In many lattice models in physics the perturbations are taken to be
diagonal, which model the coupling of an external field to the lattice
sites. We now consider another example that is relevant for the application
such as the Hatano-Nelson model. Take $P=\mbox{diag}\left(p_{1},p_{2},\dots,p_{n}\right)$
and let $M$ be a circulant matrix; note that now $P$ does \textit{not}
have to be random. For circulant matrices we get\footnote{We denote the eigenvalue by $\lambda_{k}$ instead of $\lambda_{i}$
above because $i\equiv\sqrt{-1}$ appears more in the following discussion.} 
\begin{eqnarray}
\overset{\centerdot\centerdot}{\lambda_{k}} & = & 2\sum_{j\ne k}\frac{\left|\mathbf{v_{k}^{*}}\mbox{ }P\mbox{ }\mathbf{v_{j}}\right|^{2}}{\lambda_{k}-\lambda_{j}}=-i\frac{\left|v_{k}^{a}p_{a}v_{k}^{a}\right|^{2}}{\mbox{ Im}\left(\lambda_{k}\right)}+2\sum_{j\ne\left\{ k,\bar{k}\right\} }\frac{\left|\bar{v_{k}}^{a}p_{a}v_{j}^{a}\right|^{2}}{\lambda_{k}-\lambda_{j}}\nonumber \\
 & = & -i\frac{\frac{1}{n^{2}}\left|\sum_{a=1}^{n}p_{a}\omega_{2k}^{\left(a-1\right)}\right|^{2}}{\mbox{ Im}\left(\lambda_{k}\right)}+2\sum_{j\ne\left\{ k,\bar{k}\right\} }\frac{\frac{1}{n^{2}}\left|\sum_{a=1}^{n}p_{a}\omega_{j-k}^{\left(a-1\right)}\right|^{2}}{\lambda_{k}-\lambda_{j}},\label{eq:FFT_p}
\end{eqnarray}
since $P^{T}=P$. The quantity $\frac{1}{n}\sum_{a=0}^{n-1}p_{a}\omega_{j-k}^{a-1}$
is the discrete Fourier transformation of the diagonal elements of
$P$. 

If we assume that the nonzero entries of $P$ are drawn from a standard
normal distribution, Eq. \ref{eq:FFT_p} is simply the discrete Fourier
transform of white noise and $\frac{1}{n^{2}}\left|\sum_{a=0}^{n-1}p_{a}\omega_{j-k}^{a-1}\right|^{2}$
is the power intensity which is a real positive constant independent
of the frequency $j-k$ \cite{stoica1997introduction}, denoted here
by $\kappa^{2}$ . So the total force is
\begin{equation}
\overset{\centerdot\centerdot}{\lambda_{k}}=\kappa^{2}\left\{ \frac{-i}{\mbox{ Im}\left(\lambda_{k}\right)}+\sum_{j\ne\left\{ k,\bar{k}\right\} }\frac{2}{\lambda_{k}-\lambda_{j}}\right\} \quad.\label{eq:lambda_dd_Circulant}
\end{equation}

\begin{rem}
The foregoing equation and the following arguments can apply to the
more general cases. For example, in Eq. \ref{eq:lambda_pp_finalRAMIS-1}
if $P$ is taken to be diagonal to ensure reality of the numerator
and that the numerators have similar magnitudes the following analysis
applies. \end{rem}
\begin{prop}
Let $M\left(t\right)=M+\delta t\mbox{ }P$, where $M$ is circulant
and $P$ is diagonal with iid entries with zero mean. Any two distinct
eigenvalues of $M\left(t\right)$ have an expected attractive (repulsive)
central force law of interaction between them if and only if their
real (imaginary) parts are equal. \end{prop}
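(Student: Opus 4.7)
The plan is to reduce the claim to a simple algebraic condition on $(\lambda_k-\lambda_j)^2$ using the expression for the expected pairwise force already assembled around Eq.~\ref{eq:lambda_dd_Circulant}. First I would make explicit the step $\mathbb{E}[p_a p_b]=\mathbb{E}[p^2]\delta_{ab}$ applied to the Fourier sum in Eq.~\ref{eq:FFT_p}, which under the iid zero-mean hypothesis on the diagonal of $P$ gives $\mathbb{E}[\kappa^{2}]=\mathbb{E}[p^{2}]/n$, a strictly positive real constant independent of the pair $(j,k)$. Consequently, for $j\ne\{k,\bar k\}$, the expected force of $\lambda_j$ on $\lambda_k$ has the clean form
\[
\mathbb{E}[F_{j\to k}] \;=\; \frac{2\kappa^{2}}{\lambda_k - \lambda_j},\qquad \kappa^{2}>0.
\]

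Next I would translate ``central'' into the requirement that this complex-valued force lie along the displacement vector joining the two eigenvalues in the complex plane, i.e., $\mathbb{E}[F_{j\to k}] = \alpha\,(\lambda_k-\lambda_j)$ for some $\alpha\in\mathbb{R}$. Substituting gives $2\kappa^{2}=\alpha(\lambda_k-\lambda_j)^2$, which, since $\kappa^{2}>0$, is equivalent to $(\lambda_k-\lambda_j)^2\in\mathbb{R}$. Writing $\lambda_k-\lambda_j=x+iy$ with $x,y\in\mathbb{R}$, the identity $(\lambda_k-\lambda_j)^2=(x^2-y^2)+2ixy$ shows this happens iff $xy=0$, and since the eigenvalues are distinct, iff exactly one of their real or imaginary parts coincides. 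This single observation handles both the ``if'' and the ``only if'' directions at once.

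The remaining step is a sign check of $\alpha$ in the two centrality cases. If $y=0$ (imaginary parts equal) then $\alpha=2\kappa^{2}/x^{2}>0$, so the expected force is a positive multiple of $\lambda_k-\lambda_j$, pointing away from $\lambda_j$---repulsion. If $x=0$ (real parts equal) then $\lambda_k-\lambda_j=iy$ and $\alpha=-2\kappa^{2}/y^{2}<0$, so the force is antiparallel to the displacement, pulling $\lambda_k$ toward $\lambda_j$---attraction. The only delicate point, and the main place where bookkeeping could go wrong, is reconciling the geometric picture used above (alignment with $\lambda_k-\lambda_j$) with the algebraic formulation of Definition~\ref{def:(Central-Force)-The}, which phrases centrality through $c_{ij}c_{ji}=f(\lambda_i-\lambda_j)$ with real $f$; one has to invoke the fact that $\mathbb{E}[c_{kj}c_{jk}]=\kappa^{2}$ is a \emph{real constant} rather than a generic complex function of the eigenpairs, after which the two notions coincide and no further analytic input is needed beyond Eq.~\ref{eq:lambda_dd_Circulant}.
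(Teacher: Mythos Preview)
Your proposal is correct and follows essentially the same route as the paper: both reduce the question, via Eq.~\ref{eq:lambda_dd_Circulant}, to asking when $\dfrac{1}{\lambda_k-\lambda_j}$ is a real scalar multiple of $\lambda_k-\lambda_j$, and both resolve this into the dichotomy ``equal real parts'' versus ``equal imaginary parts,'' followed by a sign check. The only cosmetic differences are that you phrase the centrality condition as $(\lambda_k-\lambda_j)^2\in\mathbb{R}\iff xy=0$, whereas the paper writes the equivalent $\overline{\lambda_k-\lambda_j}=\pm(\lambda_k-\lambda_j)$, and that you make the expectation step $\mathbb{E}[\kappa^2]=\mathbb{E}[p^2]/n$ explicit rather than invoking the power-spectrum interpretation; both are welcome bits of tidiness but not a different argument.
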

\begin{proof}
The force between the eigenvalues is given by Eq. \ref{eq:lambda_dd_Circulant}.
By def. \ref{(Attraction-and-Repulsion)}, two eigenvalues $\lambda_{k}$
and $\lambda_{j}$ have a central force between them if for some $\mu\in\mathbb{R}$,
\[
\frac{1}{\lambda_{k}-\lambda_{j}}=\mu\left(\lambda_{k}-\lambda_{j}\right).
\]
Therefore, we would need the unit vector $\hat{\mathbf{r}}_{kj}=\frac{\overline{\lambda_{k}}-\overline{\lambda_{j}}}{\left|\lambda_{k}-\lambda_{j}\right|}$
to be equal to the unit vector $\pm\frac{\lambda_{k}-\lambda_{j}}{\left|\lambda_{k}-\lambda_{j}\right|}$.
Let $\lambda_{k}=a+ib$ and $\lambda_{j}=c+id$ and by def. \ref{(Attraction-and-Repulsion)},
the force is central if $\left(a-c\right)-i\left(b-d\right)=\pm\left[\left(a-c\right)+i\left(b-d\right)\right]$.
If we take the negative sign then $a=c$ and $b-d$ is free. If we
take the positive sign $b=d$ and $a-c$ is free. Therefore, eigenvalues
of $M\left(t\right)$ that have the same real (imaginary) part attract
(repel). Conversely, if $\lambda_{k}$ and $\lambda_{j}$ have the
same real parts then $\overline{\lambda_{k}}-\overline{\lambda_{j}}=-i\left(b-d\right)$.
If $b>d$ then the force that $\lambda_{k}$ experiences is downwards
and if $d>b$, it experiences an upward force. The repulsion for eigenvalues
with equal imaginary parts is proved using a similar argument. In
particular, eigenvalues of a Hermitian matrix repel and non-real complex
conjugate eigenvalues attract. 
\end{proof}
As a concrete example, let $b$ be small and let $x\equiv a-c$ and
$y\equiv d-b$. The contribution of the terms from $j$ and $\bar{j}$
to the sum in Eq. \ref{eq:lambda_dd_Circulant} is proportional to
\begin{eqnarray}
\mbox{Re}\left\{ \frac{1}{\lambda_{k}-\lambda_{j}}+\frac{1}{\lambda_{k}-\overline{\lambda_{j}}}\right\}  & = & x\left\{ \frac{1}{x^{2}+y^{2}}+\frac{1}{x^{2}+y^{2}\left(1+2b/y\right)^{2}}\right\} \label{eq:Real_part}\\
\mbox{Im}\left\{ \frac{1}{\lambda_{k}-\lambda_{j}}+\frac{1}{\lambda_{k}-\overline{\lambda_{j}}}\right\}  & = & y\left\{ \frac{1}{x^{2}+y^{2}}-\frac{1+2b/y}{x^{2}+y^{2}\left(1+2b/y\right)^{2}}\right\} \label{eq:Imag_part}
\end{eqnarray}

Evidently any eigenvalue $\lambda_{j}$ \textit{repels} $\lambda_{k}$
\textit{along the real axis} since the sign of the force follows the
sign of $x$ (See the right figure in Fig. \ref{fig:Dynamics_HN_different_g}).
Let us suppose that $2b/y\equiv\epsilon\ll1$ (e.g., force of bulk
eigenvalues on a $\lambda_{k}$ near the real line), then Eq. \ref{eq:Imag_part}
becomes
\[
\mbox{Im}\left\{ \frac{1}{\lambda_{k}-\lambda_{j}}+\frac{1}{\lambda_{k}-\overline{\lambda_{j}}}\right\} =-\frac{y\epsilon}{x^{2}+y^{2}}\left\{ 1-\frac{2y^{2}}{x^{2}+y^{2}}\right\} 
\]
to interpret this equation for now suppose that the imaginary parts
of $\lambda_{k}$ and $\lambda_{j}$ are positive, then $\epsilon,y>0$
and the effect of the imaginary part of the pair $\lambda_{j},\overline{\lambda_{j}}$
on $\lambda_{k}$ is to compress it towards the real axis as long
as $x^{2}>y^{2}$. Moreover the imaginary part of the force, unlike
the real part, is $\mathcal{O}\left(\epsilon\right)$. An entirely
similar argument applies to the case where the imaginary parts of
$\lambda_{k}$ and $\lambda_{j}$ are negative. Hence as long as the
difference of the real parts is larger than that of the imaginary
parts, \textit{there is a compressive push towards the real line from
any complex conjugate pair of eigenvalues on $\lambda_{k}$ with a
small net magnitude}. In many examples of circulant matrices the pair
of eigenvalues closest to the real line appear on the ``edges''
of the spectrum and the assumptions made above are applicable (for
example see the Hatano-Nelson model below and Fig. \ref{fig:Dynamics_HN_different_g}).
We conclude that in such cases, the eigenvalues closest to the real
line become real as a result of the compression just discussed.

\subsection{Hatano-Nelson model}

Let $H\left(t\right)=H+\delta t\mbox{ }P$, where $H$ is a circulant
matrix given by Eq. \ref{eq:Hatano-Nelson} and $P$ is a real diagonal
matrix with iid entries with zero mean. The eigenpairs of $H$ are
\begin{figure}
\centering{}\includegraphics[scale=0.4]{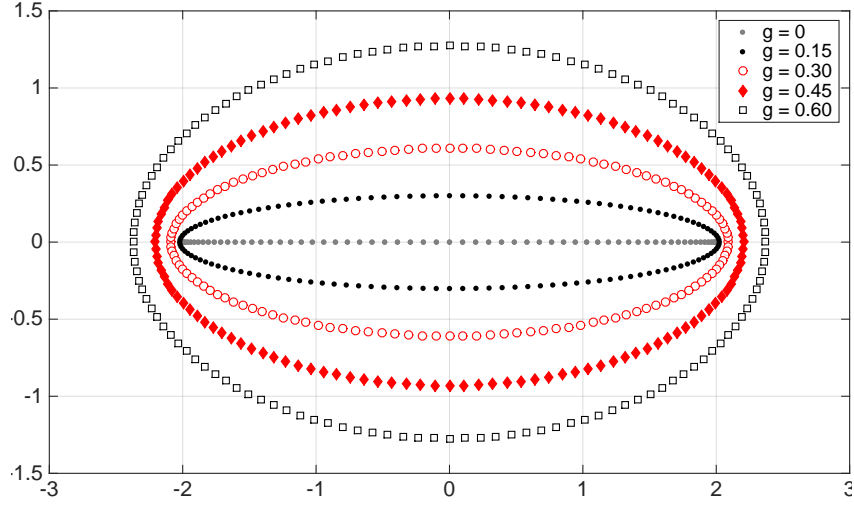}\caption{\label{fig:HatanoNelsonVSg}The spectrum of Hatano-Nelson model (Eq.
\ref{eq:Hatano-Nelson}) as a function of $g$. When $g$ is small,
the spectrum is quite flat for the eigenvalues whose real parts are
small.}
\end{figure}

\begin{eqnarray*}
\lambda_{k} & = & 2\left\{ \cosh g\mbox{ }\cos\left(2\pi k/n\right)+i\mbox{ }\sinh g\mbox{ }\sin\left(2\pi k/n\right)\right\} \\
\mathbf{v}_{k}^{T} & = & \frac{1}{\sqrt{n}}\left[1,\omega_{k},\omega_{k}^{2},\dots,\omega_{k}^{n-1}\right],
\end{eqnarray*}
where $\omega_{k}=\exp\left(2\pi ik/n\right)$. See Fig. \ref{fig:HatanoNelsonVSg}
for the effect of the asymmetry parameter $g$ on the spectrum of
$H$. 

Previously we showed that $\overset{\centerdot}{\lambda_{i}}\approx0$
for sufficiently large $n$ (Eq. \ref{eq:lambda_dot_Circulant}).
Therefore the dynamics are governed by the acceleration $\overset{\centerdot\centerdot}{\lambda_{k}}$.

Even though $H$ is normal, $H\left(t\right)$ is not. However, we
observe that for small $g$, the eigenvalues rush to the real line
almost undeflected (see the left figure in Fig. \ref{fig:Dynamics_HN_different_g}).
Why are the eigenvalues of the Hatano-Nelson model dominated by an
attraction towards the real line for relatively large $t$ when $g$
is small?

When $0<g\ll1$, $\lambda_{k}\approx2\left\{ \cos\left(2\pi k/n\right)+ig\sin\left(2\pi k/n\right)\right\} $
and Eq. \ref{eq:lambda_dd_Circulant} becomes 
\[
\overset{\centerdot\centerdot}{\lambda_{k}}\approx\kappa^{2}\left\{ \frac{-i}{2\mbox{ Im}\left(\lambda_{k}\right)}+\sum_{j\ne\left\{ k,\bar{k}\right\} }\frac{1}{2\left[\cos\left(2\pi k/n\right)-\cos\left(2\pi j/n\right)\right]}-i\frac{g\mbox{ }\left[\sin\left(2\pi k/n\right)-\sin\left(2\pi j/n\right)\right]}{2\left[\cos\left(2\pi k/n\right)-\cos\left(2\pi j/n\right)\right]^{2}}\right\} 
\]
In the foregoing sum we calculate the contribution of the sum of two
terms given by $j$ and $\bar{j}$. Ignoring $\mathcal{O}\left(g^{2}\right)$,
we get
\begin{figure}
\includegraphics[scale=0.38]{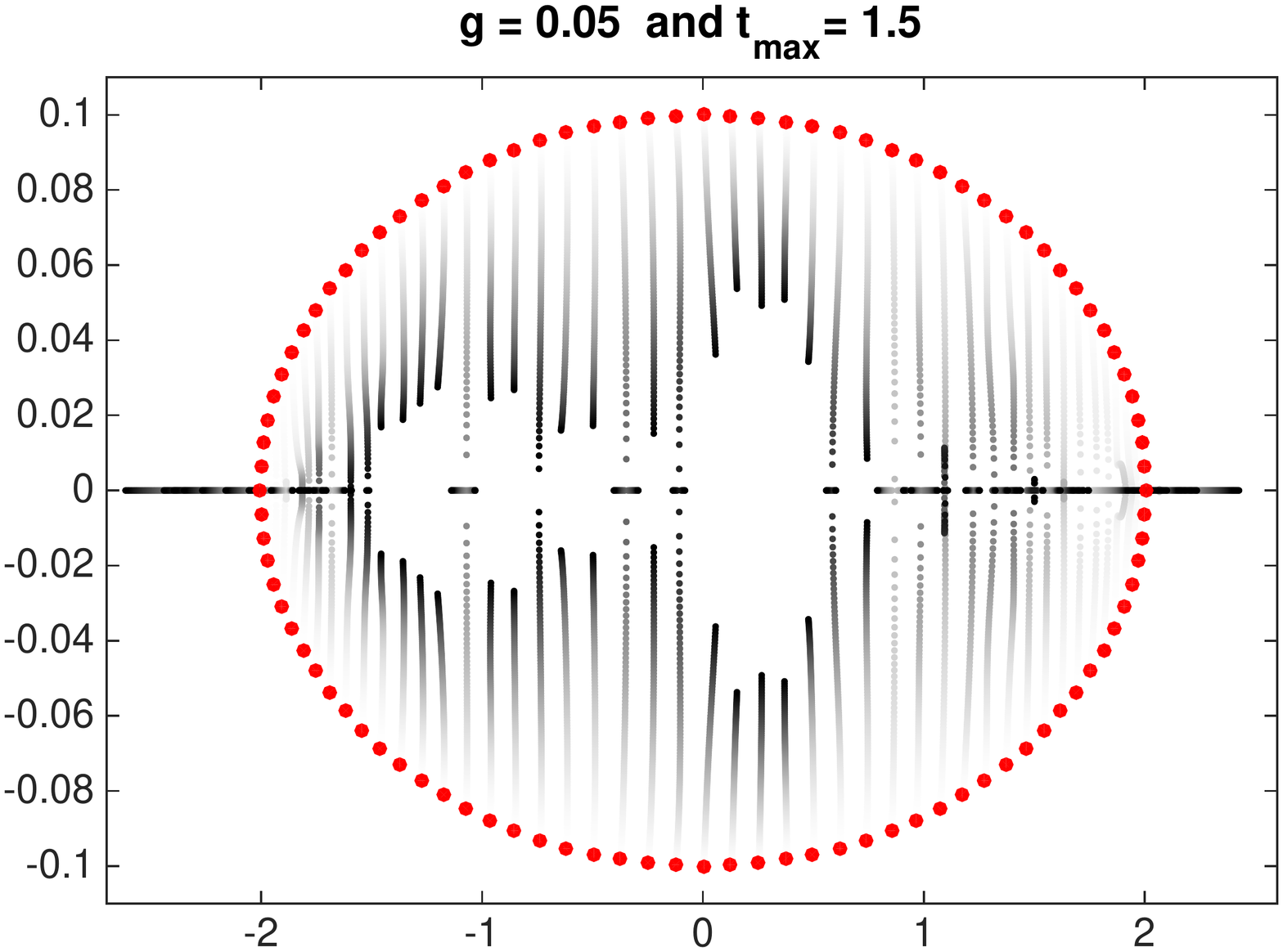}\includegraphics[scale=0.38]{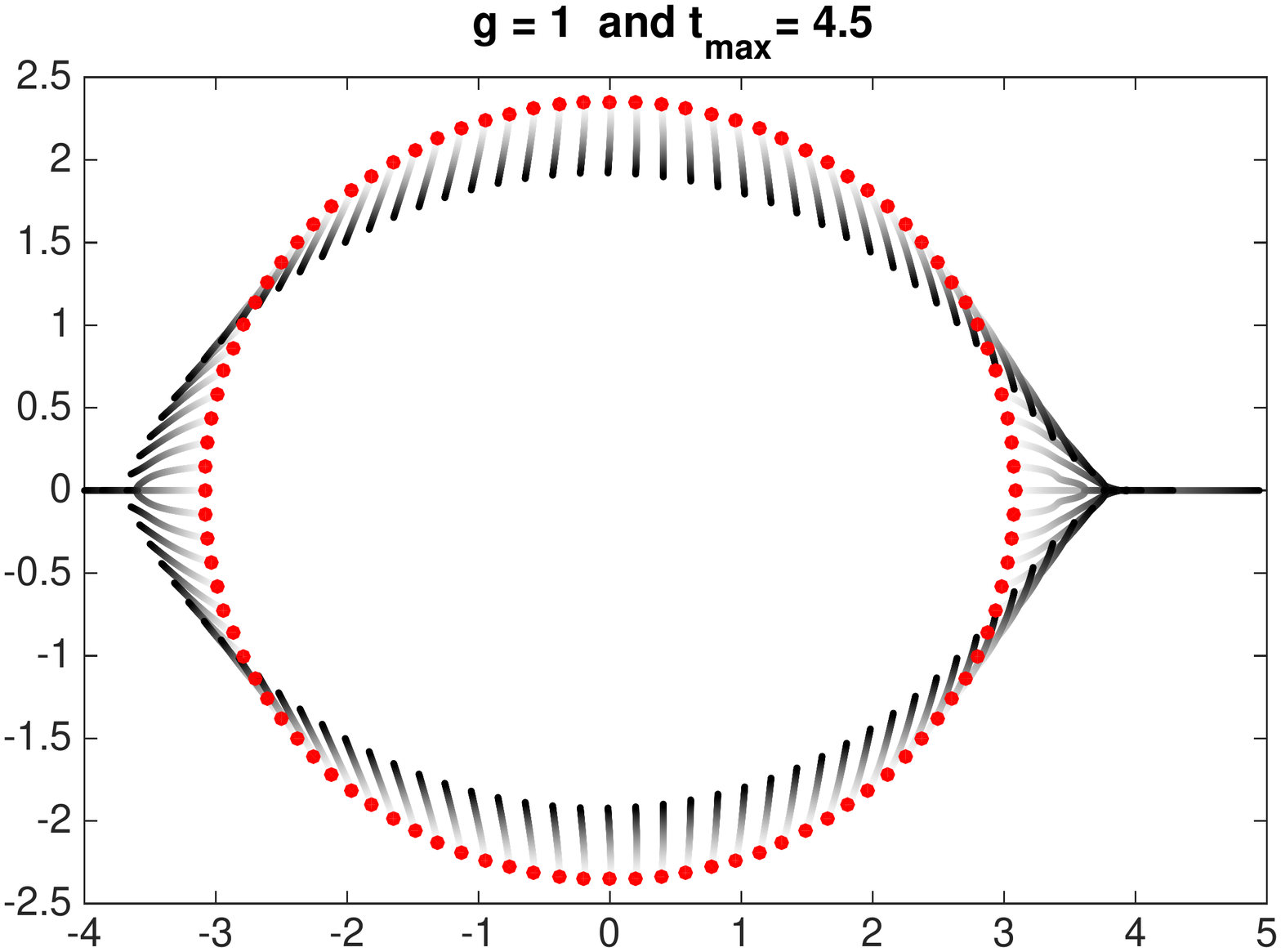}

\raggedright{}\caption{\label{fig:Dynamics_HN_different_g}Dynamics of eigenvalues for small
(left) $g$ and larger (right) $g$. Also note the difference in range
of the imaginary axis and the repulsion along the real axis in agreement
with the discussion of the previous section.}
\end{figure}

\[
\frac{1}{\left[\cos\left(2\pi k/n\right)-\cos\left(2\pi j/n\right)\right]}-i\frac{g\mbox{ }\sin\left(2\pi k/n\right)}{\left[\cos\left(2\pi k/n\right)-\cos\left(2\pi j/n\right)\right]^{2}}\quad.
\]
Since $\mbox{Im}\left(\lambda_{k}\right)\approx g\mbox{ }\sin\left(2\pi k/n\right)$,
the total force is
\begin{equation}
\frac{\overset{\centerdot\centerdot}{\lambda_{k}}}{\kappa^{2}}\approx\sum_{\begin{array}{c}
j\ne\left\{ k,\bar{k}\right\} \\
\mbox{Im}\left(\lambda_{j}\right)>0
\end{array}}\left\{ \frac{1}{\left[\cos\left(2\pi k/n\right)-\cos\left(2\pi j/n\right)\right]}\right\} -i\left\{ \frac{1}{2\mbox{ Im}\left(\lambda_{k}\right)}+\sum_{\begin{array}{c}
j\ne\left\{ k,\bar{k}\right\} \\
\mbox{Im}\left(\lambda_{j}\right)>0
\end{array}}\frac{\mbox{ Im}\left(\lambda_{k}\right)}{\left[\cos\left(2\pi k/n\right)-\cos\left(2\pi j/n\right)\right]^{2}}\right\} .\label{eq:HatanoNelson_acceleration}
\end{equation}
We conclude that there is a net pull of \textit{any} eigenvalue towards
the real line as $\mbox{ Im}\left(\lambda_{k}\right)>0$ gives a force
in the $-i$ direction and $\mbox{ Im}\left(\lambda_{k}\right)<0$
gives a force in $+i$ direction. Moreover, the real part of the force
of any eigenvalue on $\lambda_{k}$ is repulsive as can easily be
seen by analyzing the sign of the denominator of the first sum in
the foregoing equation. The eigenvalues rush towards the real line
and flow outwards away from the origin (See Fig. \ref{fig:Dynamics_HN_different_g}).

Comment: Take $k=n/4$, where $\lambda_{n/4}=i\mbox{ }2g$, it is
easy to see that $\mbox{Re}\left(\overset{\centerdot\centerdot}{\lambda}_{\frac{n}{4}}\right)=0$
by rewriting the real part of the sum (Eq. \ref{eq:HatanoNelson_acceleration})
as 
\[
\mbox{Re}\left(\frac{\overset{\centerdot\centerdot}{\lambda_{n/4}}}{\kappa^{2}}\right)=\sum_{\begin{array}{c}
j\ne\left\{ k,\bar{k}\right\} \\
\mbox{Im}\left(\lambda_{j}\right)>0
\end{array}}\frac{-1}{\cos\left(2\pi j/n\right)}=\sum_{\ell=1}^{n/4-1}\frac{-1}{\cos\left(2\pi\left(n/4+\ell\right)/n\right)}-\frac{1}{\cos\left(2\pi\left(n/4-\ell\right)/n\right)}=0\quad.
\]
So the net force on this eigenvalue is purely imaginary. In this limit
(i.e., $g\ll1$), the spectrum is an ellipse with semi-minor axis
$i\mbox{ }2g$ and semi-major axis $2$ (Figs. \ref{fig:HatanoNelsonVSg}
and \ref{fig:Dynamics_HN_different_g}). The spectrum has small imaginary
variations (i.e., is quite flat) for the eigenvalues with small real
parts. Therefore the force on these eigenvalues is approximately purely
imaginary explaining their almost undeflected rush towards the real
line (Fig. \ref{fig:Dynamics_HN_different_g}).

Although $H$ is a circulant matrix and hence normal, $H+\delta t\mbox{ }P$
is not, but the deviation from normality is mild for small $g$ as
the following shows. The following analysis combined with Corollary
\ref{cor:M_Normal}, implies that the expected force on any eigenvalue
in this model is only due to its complex conjugate even when $\delta t$
is not small. We quantify the degree of non-normality by looking at
$\left[H\left(t\right),H^{T}\left(t\right)\right]=\delta t\left\{ \left[H,P\right]-\left[H^{T},P\right]\right\} $,
which is calculated to be 

\[
\left[H\left(t\right),H^{T}\left(t\right)\right]=2\mbox{ }\delta t\mbox{ }\sinh g\left[\begin{array}{cccccc}
0 & p_{1}-p_{2} & 0 & \cdots & 0 & p_{n}-p_{1}\\
p_{1}-p_{2} & 0 & p_{2}-p_{3} &  &  & 0\\
0 & p_{2}-p_{3} & \ddots & \ddots &  & \vdots\\
\vdots &  & \ddots & 0 & p_{n-2}-p_{n-1} & 0\\
0 &  &  & p_{n-2}-p_{n-1} & 0 & p_{n-1}-p_{n}\\
p_{n}-p_{1} & 0 & \cdots & 0 & p_{n-1}-p_{n} & 0
\end{array}\right]\quad.
\]

Let us take each $p_{i}$ to be randomly distributed with mean $\mu$,
then the commutator matrix in an expectation sense (with respect to
entries) is the zero matrix. For example, if we take $p_{i}$ to be
normally distributed with mean $\mu$ and variance $\sigma^{2}$ (i.e.,
${\cal N}\left(\mu,\sigma^{2}\right)$), then $X_{i}\equiv\left(p_{i+1}-p_{i}\right)\sim{\cal N}\left(0,2\sigma^{2}\right)$
and each entry has mean zero and variance $4t^{2}\sinh^{2}g$. Therefore
fluctuations are small so long as $g$ is small where the variance
is approximately $4t^{2}g^{2}$. 
\begin{rem*}
We intentionally did not use the Frobenius norm to quantify normality
as it would not appreciate the entries of the commutator being zero
in an expected sense despite $\mathbb{E}\left(p\right)\ne0$. Take
as a measure of non-normality the ratio $\left\Vert \left[H\left(t\right),H^{T}\left(t\right)\right]\right\Vert _{F}/\left\Vert H\right\Vert _{F}^{2}$.
But $\left\Vert H\right\Vert _{F}^{2}=2n\mbox{ }\cosh2g$ and $\left\Vert \left[H\left(t\right),H^{T}\left(t\right)\right]\right\Vert _{F}^{2}=2t\mbox{ }\sinh g\left\{ 2\sum_{i}\left(p_{i+1}-p_{i}\right)^{2}\right\} $,
where $p_{n+1}=p_{1}$. and $\sum_{i}X_{i}^{2}\sim2\sigma^{2}\chi_{n-1}^{2}$,
where $\chi_{n-1}^{2}$ denotes a chi-square distribution with $n-1$
degrees of freedom and we have

\begin{align*}
\frac{\left\Vert \left[H\left(t\right),H^{T}\left(t\right)\right]\right\Vert _{F}}{\left\Vert H\right\Vert _{F}^{2}} & \sim\frac{\sigma\sqrt{2t\mbox{ }\sinh g}}{n\mbox{ }\cosh2g}\chi_{n-1}\quad.
\end{align*}
The mean of the $\chi-$distribution is $\sqrt{2}\Gamma\left[\left(n+1\right)/2\right]/\Gamma\left(n/2\right)$,
which for large $n$ is approximately $\sqrt{2n}$ and we have
\[
\mathbb{E}\left[\frac{\left\Vert \left[H\left(t\right),H^{T}\left(t\right)\right]\right\Vert _{F}}{\left\Vert H\right\Vert _{F}^{2}}\right]=\frac{2\sigma\sqrt{t\mbox{ }\sinh g}}{\sqrt{n}\mbox{ }\cosh2g}\quad.
\]

Comment: Consider $g\gg1$, where $H$ becomes proportional to a permutation
matrix. In this limit the foregoing expectation is zero. Second take
$g\ll1$, in which case the expectation is approximately $2\sigma\sqrt{\frac{tg}{n}}$;
in this limit the matrix is approximately symmetric.
\end{rem*}

\subsection{\label{sub:Further-illustrations} Further illustrations}

Suppose we perturb a $64\times64$ real orthogonal matrix, $M$, with
$\delta t\mbox{ }P$, where $P$ is a real random matrix and $0\le\delta t\le t_{max}$
with Gaussian entries. In Fig. \ref{fig:Orthogonal} (left) we show
the motion of the eigenvalues and indicate $t_{max}$ for each and
on the right we take $P$ to be a random $\pm1$ matrix 
\begin{figure}
\centering{}\includegraphics[scale=0.35]{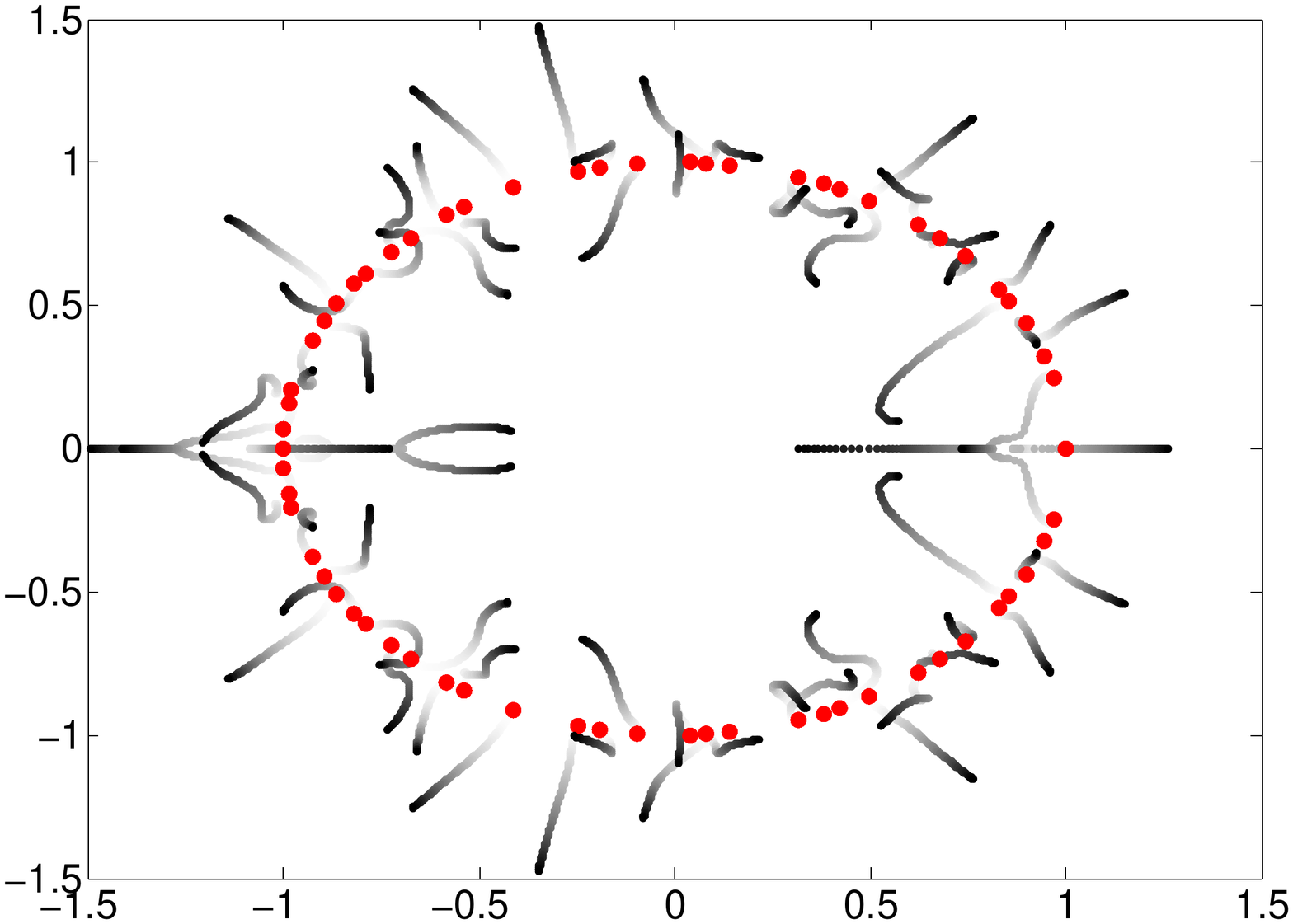}\includegraphics[scale=0.31]{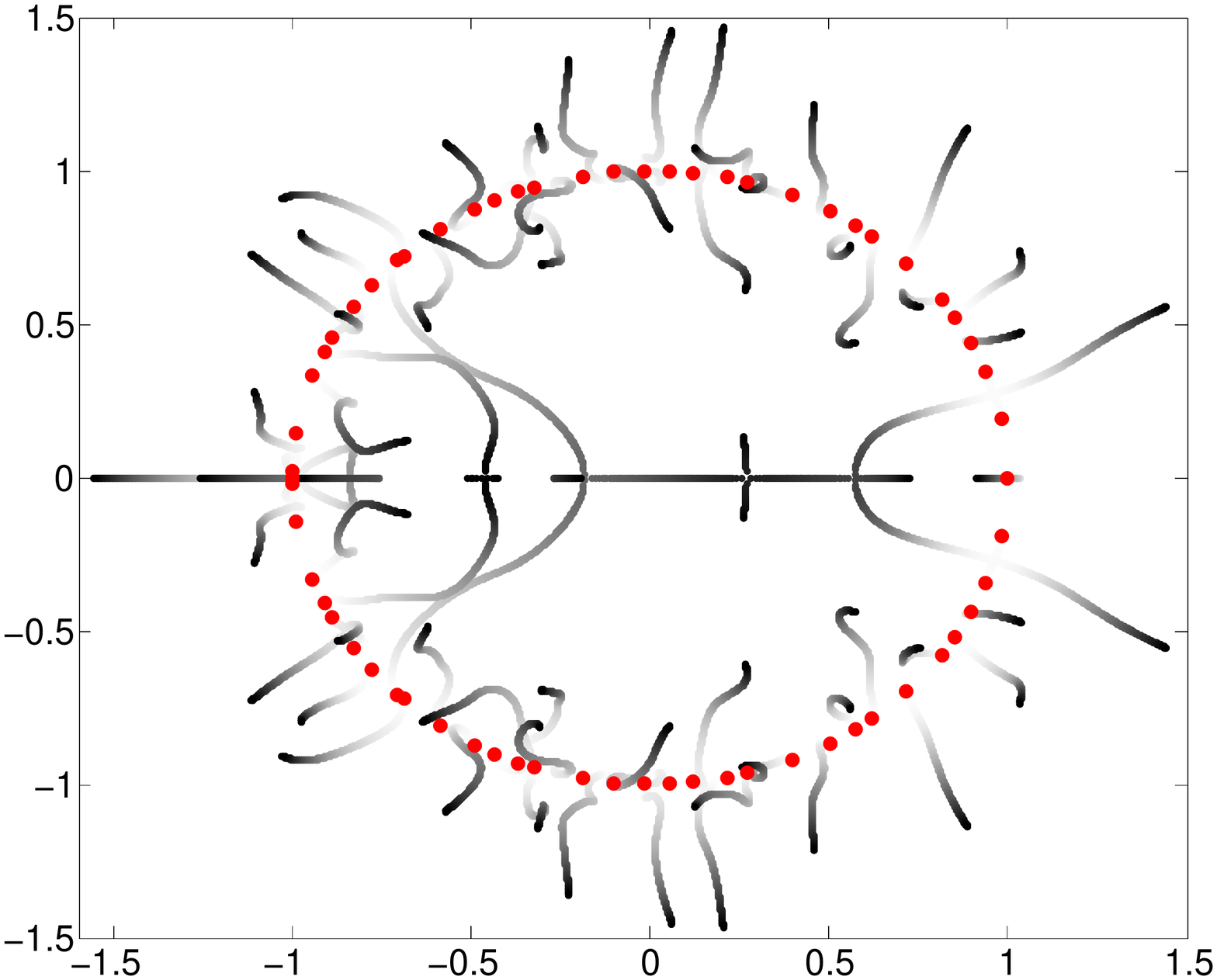}\caption{\label{fig:Orthogonal} Trajectories of the eigenvalues of $M+\delta t\mbox{ }P$.
Left: $M$ is an orthogonal matrix and $t_{max}=2$. $P$ is a real
random Gaussian matrix with norm $1$. Right: $M$ is an orthogonal
matrix and $P$ is a random $\pm1$ matrix whose norm is $2$ and
final time is $t_{max}=0.74$.}
\end{figure}

An application of this work is a better understanding of the origin
of real eigenvalues in the Hatano-Nelson model as discussed above
(Fig. \ref{fig:Hatano-Nelson}). In Fig. \ref{fig:Dynamics_HN_different_g}
one sees the formation of wings mentioned Sec. \ref{sec:Motivation}.
\begin{figure}
\includegraphics[scale=0.35]{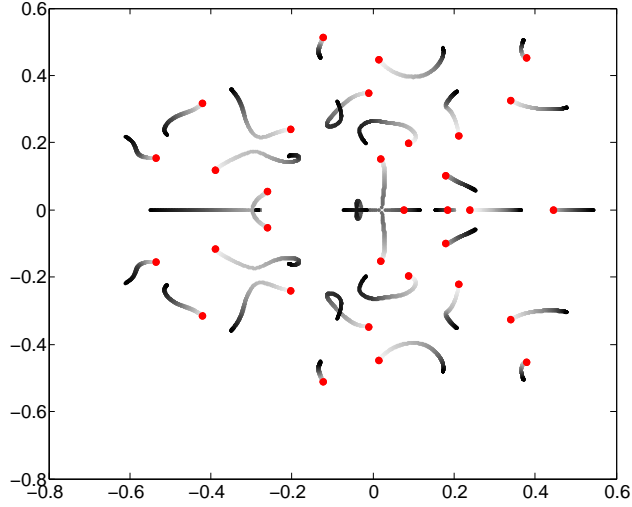}\caption{\label{fig:RealReal}Eigenvalues of $M+\delta t\mbox{ }P$, where
$M$ is a real Gaussian random matrix of size $32$ with unit $2-$norm.
$P$ is a real random Gaussian matrix with unit $2-$norm. We took
$t_{max}=0.5$. }
\end{figure}
Despite the base case being the same, the motion of the eigenvalues
of the Hatano-Nelson model in Figs. \ref{fig:Hatano-Nelson} (left)
and Fig. \ref{fig:Dynamics_HN_different_g} is much more uniform than
in Fig. \ref{fig:Stochastic-dynamics-of} (right). In the latter plot
in the course of the evolution there were $50$ intervals with a different
random $P$ acting in each. Although continuous, this makes the motion
jittery. 

In Fig. \ref{fig:RealReal} we take $M$ and $P$ to be two independent
$32\times32$ real random Gaussian matrices and normalize them to
have a unit $2-$norm. Here we also see cases where complex conjugate
pairs become real eigenvalues first and then as a result of yet another
encounter leave the real line by forming a complex conjugate pair
with the newly encountered eigenvalue.

\section{\label{sec:Stochatic-dynamics}Special case of stochastic dynamics
of the eigenvalues}

\subsection{A discrete stochastic process}

Suppose $M\left(t\right)$ is a discrete stochastically varying matrix
with $M\equiv M\left(0\right)$ being a fixed real $n\times n$ matrix.
We discretize time $0=t_{0}<t_{1}<\cdots$, and define the evolution
of $M\left(t\right)$ for any $t_{i}\le\delta t\le t_{i+1}$ by a
piece-wise linear stochastic process 
\begin{equation}
M\left(t_{i}+\delta t\right)=M\left(t_{i}\right)+\delta t\mbox{ }P\left(t_{i}\right)\quad,\label{eq:M_t_Discrete}
\end{equation}
where $\delta t\in\left[0,t_{i+1}-t_{i}\right)$ and each $P\left(t_{i}\right)$
is a random matrix \textit{impulse} whose entries are independent
and have mean zero i.e., $\mathbb{E}[p_{jk}]=0$ $\forall j,k$. The
eigenvalues of a continuous stochastic process are continuous in $t$
and within every interval $(t_{i},t_{i+1})$ the results of Subsection
\ref{sub:Random-perturbations-of} apply. 

Had we used $\sqrt{\delta t}$ in Eq. \ref{eq:M_t_Discrete}, and
defined the process such that $M\left(t_{i}+\delta t\right)-M\left(t_{i}\right)=\sqrt{\delta t}\mbox{ }P_{i}\sim N\left(0,\delta t\right)_{\mathbb{R}^{n}\times\mathbb{R}^{n}}$,
then $M\left(t\right)$ would define a \textit{discrete Wiener process,
}which is very special type of a stochastic process. The square root
of $\delta t$ is to satisfy the requirement that the variance grows
linearly with time. The natural geometry would then be a random walk
on the space of $n\times n$ real matrices \cite[Chapter 3]{TTao2012}.

\subsection{Smoothened discrete stochastic process}

In practice nothing develops infinitely fast and no impulse acts instantly.
It is more satisfactory to have a controlled smooth, albeit potentially
rapidly changing, formulation of the stochastic impulse. To this end,
in what follows we define a smooth version of the stochastic impulse,
denoted by $P_{\epsilon}\left(t\right)$, that in the limit of $\epsilon\rightarrow0$
becomes Eq. \ref{eq:M_t_Discrete}. Let 
\begin{equation}
P_{\epsilon}\left(t\right)=\sum_{i\ge0}P\left(t_{i}\right)\mbox{ }W_{\epsilon}\left(t;\mbox{ }t_{i},t_{i+1}\right),\label{eq:P_t_differentiable}
\end{equation}
 where each $P\left(t_{i}\right)$ is as before and we define the
\textit{window} function $W_{\epsilon}\left(t;\mbox{ }t_{i},t_{i+1}\right)$
to be (see Fig. \ref{fig:bump--for}) 

\[
W_{\epsilon}\left(t;\mbox{ }t_{i},t_{i+1}\right)=\left\{ \begin{array}{ccc}
\Theta\left(t_{i+1}\right)-\Theta\left(t_{i}\right) & \quad & t_{i}+\epsilon<t<t_{i+1}-\epsilon\\
B_{-}\left(t\mbox{ };\mbox{ }t_{i},t_{i+1}\right) & \quad & t\le t_{i}+\epsilon\\
B_{+}\left(t\mbox{ };\mbox{ }t_{i},t_{i+1}\right) & \quad & t\ge t_{i+1}-\epsilon
\end{array}\right.
\]
 with $\epsilon<\left(t_{i+1}-t_{i}\right)/2$, and $\Theta$ being
the Heaviside function. $B^{+}$ and $B^{-}$ are the right and left
sections of the modified bump function \cite{Loring2008} respectively
shown in Fig. \ref{fig:bump--for}, such that they reach zero at $t_{i}$
and $t_{i+1}$ and are scaled to have $1$ as their maxima (Fig. \ref{fig:bump--for}).
Mathematically, they are

\[
B_{-}(t\mbox{ };\mbox{ }t_{i},t_{i+1})=\left\{ \begin{array}{ccc}
e^{1-\frac{1}{1-\left[\left(t-t_{i}-\epsilon\right)/\epsilon\right]^{2}}} & \quad & t_{i}\le t\le t_{i}+\epsilon\\
0 &  & \mbox{otherwise}\mbox{ },
\end{array}\right.
\]
 and

\[
B_{+}(t\mbox{ };\mbox{ }t_{i},t_{i+1})=\left\{ \begin{array}{ccc}
e^{1-\frac{1}{1-\left[\left(t-t_{i+1}+\epsilon\right)/\epsilon\right]^{2}}} & \quad & t_{i+1}-\epsilon\le t\le t_{i+1}\\
0 &  & \mbox{otherwise}\mbox{ }.
\end{array}\right.
\]

We think of $B_{\pm}(t\mbox{ };\mbox{ }t_{i},t_{i+1})$ as equations
for the \textit{boundary layers} near every $t_{i}$. Moreover, the
desired independence of time intervals in the discrete stochastic
process is guaranteed by the independence of $P\left(t_{i}\right)$'s
and their confinement to $t_{i}\le t\le t_{i+1}$ by $W_{\epsilon}\left(t;\mbox{ }t_{i},t_{i+1}\right)$.
\begin{figure}
\centering{}\includegraphics[scale=0.45]{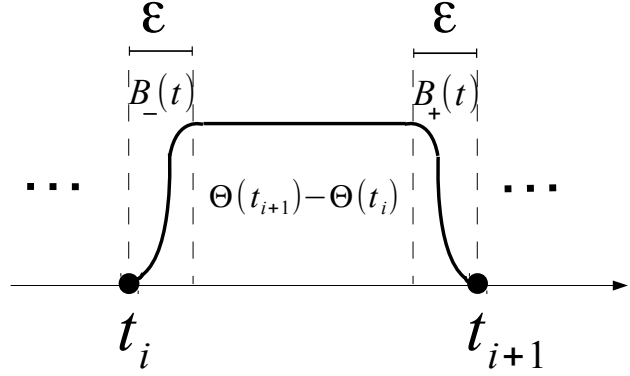}\caption{\label{fig:bump--for} The smooth window function $W_{\epsilon}\left(t;\mbox{ }t_{i},t_{i+1}\right)$
for $t_{i}\le t\le t_{i+1}$ . }
\end{figure}

It is easy to check that $P_{\epsilon}\left(t\right)$ is differentiable
everywhere \footnote{It is possible to construct $C^{\infty}$ versions of such window
functions such as the Planck-taper window function\cite[pp. 127-134]{Loring2008};
however, $W$ has a simple form with the basic differentiability properties
needed here.}. More specifically, for $\epsilon>0$, it is smooth ($C^{\infty}$)
everywhere but on a set of measure zero (i.e., all $t_{i}$), where
it is only $C^{1}$ . We shall need the differentiable property below.
With these definitions the smooth version of Eq. \ref{eq:M_t_Discrete}
reads

\begin{equation}
\overset{\centerdot}{M}_{\epsilon}\left(t\right)=P_{\epsilon}\left(t\right)\quad,\label{eq:M_t}
\end{equation}
 with the base case $M\equiv M_{\epsilon}\left(0\right)$ being a
fixed real $n\times n$ matrix. $M\left(t\right)$, which is not differentiable
at any $t_{i}$, is recovered by $\lim_{\epsilon\rightarrow0}M_{\epsilon}\left(t\right)$.

Since $P_{\epsilon}\left(t\right)$ is random and differentiable,
$\lambda\left(t\right)$'s are distinct with probability one. That
is $\lambda_{i}\left(t\right)$ has an open neighborhood around it
that does not contain any other eigenvalue and can be taken to be
a smooth function of $t$ \cite{TerryTao2009}. We shall investigate
the eigenvalues of $M\left(t\right)$.

The smoothing procedure is not necessary if one is only interested
in the spectral properties inside a single interval such as in Sec.
\ref{sub:Further-illustrations}. In this case, attraction holds for
$t\in\left(t_{i},t_{i+1}\right)$; i.e., outside the boundary layers.

From our derivations leading to Eq. \ref{eq:M_t}, it now becomes
obvious that, for small $\epsilon$, inside the boundary layers, i.e.,
times $\left(t_{i}-\epsilon,t_{i}+\epsilon\right)$, the dominant
force on any eigenvalue is the inertial force $\mathbf{u_{i}}^{*}\left(t\right)\mbox{ }\overset{\centerdot}{P}_{\epsilon}\left(t\right)\mbox{ }\mathbf{v_{i}}\left(t\right)$,
because

\begin{eqnarray}
\mathbf{u_{i}}^{*}\left(t\right)\mbox{ }\overset{\centerdot}{P}_{\epsilon}\left(t\right)\mbox{ }\mathbf{v_{i}}\left(t\right) & \sim & 1/\epsilon^{2}\qquad t_{j}<t<t_{j}+\epsilon\label{eq:BndryRATE}\\
\mathbf{u_{i}}^{*}\left(t\right)\mbox{ }\overset{\centerdot}{P}_{\epsilon}\left(t\right)\mbox{ }\mathbf{v_{i}}\left(t\right) & \sim & -1/\epsilon^{2}\qquad t_{j}-\epsilon<t<t_{j}\quad.\label{eq:BndryRate2}
\end{eqnarray}
For all other times this term is zero and the interaction of the eigenvalues,
given by the second term (Eq. \ref{eq:lambda_pp_finalRAMIS-1}), governs
the force. We will further discuss this and the continuum limit in
the next section.

Below to simplify notation, we let $\lambda_{i}\left(t\right)=\lambda_{i}$,
$\mathbf{v_{i}}\left(t\right)=\mathbf{v_{i}}$ , and $\mathbf{u_{i}}\left(t\right)=\mathbf{u_{i}}$,
whereby, Eq. \ref{eq:lambda_pp_final} reads

\begin{align}
\overset{\centerdot\centerdot}{\lambda_{i}}\left(t\right) & =(\mathbf{u}_{i}^{*}\mbox{ }\overset{\centerdot}{P}_{\epsilon}\left(t\right)\mbox{ }\mathbf{v_{i}})+2\sum_{j\ne i}\frac{c_{ij}\mbox{ }c_{ji}}{\lambda_{i}-\lambda_{j}}\label{eq:lamda_pp_Wiener}\\
 & \doteq\left(\mbox{Stochastic Force}\right)+\sum_{j\ne i}\left\{ \mbox{ Force of }\lambda_{j}\mbox{ on }\lambda_{i}\mbox{ }\right\} ,
\end{align}
where $c_{ij}=\mathbf{u_{i}^{*}}\mbox{ }P_{\epsilon}\left(t\right)\mbox{ }\mathbf{v_{j}}$. 
\begin{cor}
(\textbf{attraction}) Let $M\equiv M_{\epsilon}\left(0\right)$ be
a real matrix that evolves according to $\overset{\centerdot}{M}_{\epsilon}\left(t\right)=P_{\epsilon}\left(t\right)$,
where $P_{\epsilon}\left(t\right)$ is given by Eq. \ref{eq:P_t_differentiable}.
Then for all $t$, any complex conjugate pair of eigenvalues of $M_{\epsilon}\left(t\right)$
attract (as in Definition \ref{(Attraction-and-Repulsion)}). Moreover
the expected stochastic force is zero.\end{cor}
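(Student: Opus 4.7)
My plan is to reduce the attraction claim to a direct application of Theorem \ref{Thm:(Eigenvalue-Attraction)}, after verifying that its hypotheses are inherited by $M_{\epsilon}(t)$. First, since $M\equiv M_{\epsilon}(0)$ is real and each window function $W_{\epsilon}(t;t_i,t_{i+1})$ is a real scalar, the matrix $M_{\epsilon}(t)=M+\int_0^t P_{\epsilon}(s)\,ds$ is real for all $t$. Second, because $W_{\epsilon}$ is continuous on $[t_i,t_{i+1}]$ and $C^{\infty}$ off the grid $\{t_i\}$, the trajectory $t\mapsto M_{\epsilon}(t)$ is at least $C^{1}$, so $\dot M_{\epsilon}(t)=P_{\epsilon}(t)$ is well-defined everywhere. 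Third, both the simplicity of the spectrum and the non-vanishing of $\mathbf{u}_i^T(t)\,P_{\epsilon}(t)\,\mathbf{v}_i(t)$ hold almost surely at almost every $t$, because the joint distribution of the $P(t_i)$ is absolutely continuous and the ``bad'' events are cut out by real-analytic equations in the entries. Once these generic conditions are in place, Theorem \ref{Thm:(Eigenvalue-Attraction)} immediately yields attraction of every complex-conjugate pair of eigenvalues of $M_{\epsilon}(t)$ for all $t$.

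For the second claim, the stochastic force on $\lambda_i$ identified in Eq. \ref{eq:lamda_pp_Wiener} is $\mathbf{u}_i^{*}(t)\,\dot P_{\epsilon}(t)\,\mathbf{v}_i(t)$. Differentiating the series defining $P_{\epsilon}$ gives
\begin{equation*}
\dot P_{\epsilon}(t)=\sum_{i\ge 0}P(t_i)\,\dot W_{\epsilon}(t;t_i,t_{i+1}),
\end{equation*}
a linear combination of independent mean-zero random matrices weighted by deterministic scalars. Linearity of expectation yields $\mathbb{E}[\dot P_{\epsilon}(t)]=0$ as a matrix, hence $\mathbf{u}_i^{*}(t)\,\mathbb{E}[\dot P_{\epsilon}(t)]\,\mathbf{v}_i(t)=0$ once we hold the eigenvectors fixed. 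To promote this to a statement about $\mathbb{E}[\mathbf{u}_i^{*}(t)\,\dot P_{\epsilon}(t)\,\mathbf{v}_i(t)]$ one picks a $t$ in some boundary layer $(t_j,t_j+\epsilon)$ where only the single term containing $P(t_j)$ survives in $\dot P_{\epsilon}(t)$, conditions on the natural filtration $\mathcal{F}_{t_j}$ which determines $M_{\epsilon}(t_j)$ and hence (to leading order) the eigenvectors, and uses the tower property together with $\mathbb{E}[P(t_j)\mid \mathcal{F}_{t_j}]=0$.

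The main obstacle is precisely this conditioning step: for fixed $\epsilon>0$ and $t>t_j$ inside the boundary layer, $M_{\epsilon}(t)$ already incorporates a partial weight $\int_{t_j}^{t}W_{\epsilon}(s;t_j,t_{j+1})\,ds$ of $P(t_j)$ itself, so the eigenvectors $\mathbf{u}_i(t),\mathbf{v}_i(t)$ are not strictly independent of the impulse $P(t_j)$ that is driving the stochastic force at that moment. The cleanest resolution is to pass to the limit $\epsilon\to 0$, where the accumulated contribution of $P(t_j)$ at $t_j^{+}$ vanishes and the decoupling becomes exact; a weaker but unconditional reading, which follows solely from linearity of expectation and the zero-mean assumption on each $P(t_i)$, is to interpret the expected stochastic force as $\mathbf{u}_i^{*}\,\mathbb{E}[\dot P_{\epsilon}]\,\mathbf{v}_i=0$ for any fixed pair of (possibly random) vectors.
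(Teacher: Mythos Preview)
Your proposal is correct and follows essentially the same route as the paper: attraction is reduced to Theorem~\ref{Thm:(Eigenvalue-Attraction)} once reality and smoothness of $M_{\epsilon}(t)$ are noted, and the vanishing of the expected stochastic force is obtained from $\mathbb{E}[(\dot P_{\epsilon})_{m\ell}]=\sum_i \mathbb{E}[p_{m\ell}(t_i)]\dot W_{\epsilon}(t;t_i,t_{i+1})=0$. The independence issue you flag between the eigenvectors and the current impulse $P(t_j)$ is handled in the paper exactly as in your ``weaker'' reading---by taking $\mathbf{u}_i,\mathbf{v}_i$ to be the eigenvectors of $M_{\epsilon}(t_i)$ (hence independent of $P_{\epsilon}(t_i+\delta t)$) so that the expectations are accurate up to $\mathcal{O}(\delta t)$.
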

\begin{proof}
The attraction immediately follows from the previous proof of attraction
for small perturbations. Let us denote $P_{\epsilon}\left(t\right)$
and its components by $P_{\epsilon}$ and $p_{m\ell}$ respectively.
The first variation of the eigenvalues in Eq. \ref{eq:lambda_prime},
using index notation reads $\overset{\centerdot}{\lambda_{i}}=\mathbf{u}_{i}^{*,m}\mbox{ }p_{m\ell}\mbox{ }\mathbf{v}_{i}^{,\ell}$.

We comment that for small $t_{i}\le\delta t\le t_{i+1}$, $\mathbf{u_{i}}^{*}$
and $\mathbf{v_{i}}$ are taken to be eigenvectors of $M\left(t_{i}\right)$
which are independent of $P_{\epsilon}\left(t_{i}+\delta t\right)$.
Hence the right-hand sides, in the proof below, are accurate up to
$\mathcal{O}\left(\delta t\right)$. The first variation is 
\begin{equation}
\mathbb{E}[\overset{\centerdot}{\lambda_{i}}]=\bar{u}_{i}^{m}\mbox{ }\mathbb{E}[p_{m\ell}]\mbox{ }v_{i}^{\ell}=0\quad,\label{eq:E_vel_Ev}
\end{equation}
since $\mathbb{E}\left[p_{m\ell}\right]=0$ by assumption. From Eq.
\ref{eq:lambda_pp_final} we have 
\begin{eqnarray*}
\mathbb{E}[\overset{\centerdot\centerdot}{\lambda_{i}}] & = & \bar{u}_{i}^{m}\mbox{ }\mathbb{E}[(\overset{\centerdot}{P}_{\epsilon})_{m\ell}]\mbox{ }v_{i}^{\ell}+2\mathbb{E}\sum_{j\ne i}\frac{(\mathbf{u_{j}^{*}}\mbox{ }P_{\epsilon}\mbox{ }\mathbf{v_{i}})(\mathbf{u_{i}^{*}}\mbox{ }P_{\epsilon}\mbox{ }\mathbf{v_{j}})}{\lambda_{i}-\lambda_{j}}\quad.
\end{eqnarray*}
where $\mathbb{E}[(\overset{\centerdot}{P}_{\epsilon})_{m\ell}]=\sum_{i}\mathbb{E}[p_{m\ell}(t_{i})]\overset{\centerdot}{W}_{\epsilon}\left(t;\mbox{ }t_{i},t_{i+1}\right)=0$.
We have 
\begin{eqnarray}
\mathbb{E}[\overset{\centerdot\centerdot}{\lambda_{i}}] & = & -i\frac{\mathbb{E}[p^{2}]\left\Vert \mathbf{u_{i}}\right\Vert _{2}^{2}}{\mbox{Im}\left(\lambda_{i}\right)}+2\mathbb{E}[p^{2}]\sum_{j\ne\left\{ i,\bar{i}\right\} }\frac{\mathbf{\left(v_{i}^{T}v_{j}\right)}\mathbf{\left(u_{i}^{*}\overline{u_{j}}\right)}}{\lambda_{i}-\lambda_{j}}\label{eq:StochasticAttraction}
\end{eqnarray}
Therefore the expected force of attraction between complex conjugate
pairs is
\begin{equation}
\mathbb{E}[\mbox{force of }\overline{\lambda_{i}}\mbox{ on }\lambda_{i}]=-i\frac{\mathbb{E}[p^{2}]\left\Vert \mathbf{u_{i}}\right\Vert _{2}^{2}}{\mbox{Im}\left(\lambda_{i}\right)}\quad.\label{eq:Expected_cc_attraction}
\end{equation}
\end{proof}
\begin{rem}
Complex conjugate eigenvalues and eigenvectors that ultimately become
real or those that are initially real and eventually become a complex
conjugate pair must first become equal. Since the motion of the eigenvalues
is continuous and the matrix is real, the transition from a complex
conjugate pair to two real eigenvalues or vice versa requires that
they first become equal. This corollary may be obvious but perhaps
is interesting in that the degeneracy of eigenvalues is forced under
a generic evolution. \end{rem}
\begin{cor}
\label{cor:IllCondition}The expected force of attraction of the complex
conjugate eigenvalues is directly proportional to the square of the
$2-$norm of the left eigenvector and the variance of the entries
of the perturbation matrix. 
\end{cor}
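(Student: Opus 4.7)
The plan is to read the statement directly off the formula already established in the preceding corollary, namely Eq.~\ref{eq:Expected_cc_attraction},
\[
\mathbb{E}[\mbox{force of }\overline{\lambda_{i}}\mbox{ on }\lambda_{i}]=-i\,\frac{\mathbb{E}[p^{2}]\,\left\Vert \mathbf{u_{i}}\right\Vert _{2}^{2}}{\mbox{Im}\left(\lambda_{i}\right)}\,.
\]
Because the entries of $P_{\epsilon}(t)$ have zero mean by assumption, $\mathbb{E}[p^{2}] = \mathrm{Var}(p)$, so the numerator of the right-hand side is exactly the product of $\mathrm{Var}(p)$ with $\|\mathbf{u}_{i}\|_{2}^{2}$. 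Taking magnitudes (or equivalently reading off the imaginary component of the force vector), the magnitude of the expected attraction is $\mathrm{Var}(p)\,\|\mathbf{u}_{i}\|_{2}^{2}/|\mathrm{Im}(\lambda_{i})|$, which is the asserted joint proportionality.

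To make the argument self-contained I would first recall that the complex-conjugate contribution to $\overset{\centerdot\centerdot}{\lambda}_{i}$ is isolated from the $j\neq\{i,\bar{i}\}$ sum as in Theorem~\ref{Thm:(Eigenvalue-Attraction)}, giving the term $-i\,|c_{i\bar{i}}|^{2}/\mathrm{Im}(\lambda_{i})$ with $c_{i\bar{i}}=\mathbf{u}_{i}^{T}\,\overset{\centerdot}{M}\,\mathbf{v}_{i}$. Then I would take the expectation of $|c_{i\bar{i}}|^{2}$ with respect to the iid entries of $P$. Writing $c_{i\bar{i}}=u_{i}^{a}\,p_{ab}\,v_{i}^{b}$ and using $\mathbb{E}[p_{ab}p_{cd}]=\mathbb{E}[p^{2}]\,\delta_{ac}\delta_{bd}$ together with $\|\mathbf{v}_{i}\|_{2}=1$, one obtains $\mathbb{E}[|c_{i\bar{i}}|^{2}]=\mathbb{E}[p^{2}]\,\|\mathbf{u}_{i}\|_{2}^{2}$. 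Substituting back produces exactly Eq.~\ref{eq:Expected_cc_attraction} and hence the corollary.

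There is no real obstacle here; the statement is essentially an interpretive repackaging of the previous corollary's formula into a qualitative assertion about how the attractive force scales. The only subtlety worth flagging is that the right eigenvector has been normalized to unit length, so the asymmetry between $\mathbf{u}_{i}$ and $\mathbf{v}_{i}$ in the conclusion is a consequence of this normalization convention rather than a genuine asymmetry in the dynamics; one could equally state the result using $\|\mathbf{u}_{i}\|_{2}^{2}\|\mathbf{v}_{i}\|_{2}^{2}$ and divide by the normalization product. A brief remark that $\|\mathbf{u}_{i}\|_{2}^{2}$ coincides (up to normalization) with the eigenvalue condition number $\kappa(\lambda_{i})=1/|\mathbf{u}_{i}^{*}\mathbf{v}_{i}|$ would naturally motivate the ill-conditioning discussion in the text, which is why the corollary is worth stating in this form.
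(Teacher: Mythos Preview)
Your proposal is correct and follows essentially the same approach as the paper, which simply states that the corollary is an immediate consequence of Eq.~\ref{eq:StochasticAttraction} (equivalently Eq.~\ref{eq:Expected_cc_attraction}). Your version is more detailed---you spell out $\mathbb{E}[p^{2}]=\mathrm{Var}(p)$ and re-derive the expectation of $|c_{i\bar{i}}|^{2}$---but conceptually both arguments just read the proportionality off the numerator of the formula.
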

This is an immediate consequence of Eq. \ref{eq:StochasticAttraction}.
The numerator in Eq. \ref{eq:StochasticAttraction} can change the
strength of interaction, most notably because of $\left\Vert \mathbf{u_{i}}^{*}\right\Vert _{2}^{2}$,
which for non-normal matrices can become quite large. 

Now suppose we want to define a \textit{continuous stochastic process
}where in the equations of motion we first take $\epsilon\rightarrow0$
and then $\delta t\rightarrow0$. The first limit will produce two
Dirac delta functions at each $t_{i}$ whereby $\lim_{t\rightarrow t_{i}^{<}}\overset{\centerdot}{P}\left(t\right)=-P\left(t_{i-1}\right)\delta\left(t_{i}\right)$
and $\lim_{t\rightarrow t_{i}^{>}}\overset{\centerdot}{P}\left(t\right)=P\left(t_{i}\right)\delta\left(t_{i}\right)$;
therefore the function is not differentiable at $t_{i}$. Lastly,
$\delta t\rightarrow0$ will ensure that the stochastic process is
nowhere differentiable as one expects from continuous Brownian motion
ideas. So what does this mean for eigenvalue attraction? The infinitesimally
close delta function impulses dominate the time evolution (Eqs. \ref{eq:BndryRATE}
and \ref{eq:BndryRate2}) with a zero mean force on any eigenvalue.
\begin{rem}
For a real stochastic process as before the requirement of $M_{\epsilon}\left(t\right)$
being normal for all times demands that $\left[M_{\epsilon}\left(t_{i}\right)-M_{\epsilon}^{T}\left(t_{i}\right),P_{\epsilon}\left(t_{i}\right)\right]=0$
which generally is not met for generic $P_{\epsilon}\left(t_{i}\right)$. 
\end{rem}

\section{\label{sec:Further-discussions}Further discussions and open problems}

Strongly attracting complex conjugate pairs ultimately coalesce on
the real line and scatter like billiard balls and move about on the
real line. Thereafter they can act like ``normal'' eigenvalues and
repel. In particular, the inevitability of collision between an eigenvalue
and its complex conjugate prevents any eigenvalue from crossing the
real line (changing the sign of its imaginary part), without a second
encounter.

The motion of the eigenvalues is constrained by the reality of the
matrix; the eigenvalue distribution remains symmetric about the real
axis. As can be seen in the Figures, an interesting scenario is when
two complex conjugate eigenvalues attract and coalesce on the real
line, after which they move in opposite directions on the real line
till one of them collides with another (real) eigenvalue. Subsequently,
the newly encountered eigenvalue and one of the original complex conjugate
eigenvalues can momentarily become equal, then move off the real line
as a new complex conjugate pair (see Fig. \ref{fig:RealReal}, the
left figure in Fig. \ref{fig:Example-3} and the right figure in Fig.
\ref{fig:Orthogonal} for examples). At times they simply repel each
other and remain real.

We emphasize that the proof of complex conjugate attraction is one
of the many forces and at any given instance the net force on any
eigenvalue (Eq. \ref{eq:lambda_pp_final} and Eq. \ref{eq:lamda_pp_Wiener})
is the result of the sum of forces of the remaining $n-1$ eigenvalues.
In particular, a random collision can take place in the complex plane
between eigenvalues that are not complex conjugates and cause a deviation
in the path of an eigenvalue that initially moved towards the real
line.

It should be clear that the attraction proved in this work does \textit{not}
imply that the long-time behavior is an aggregation of all the eigenvalues
on the real line. For a fixed $\delta t$, and over long times, the
process loses memory of the initial condition (i.e., $M\left(t=0\right)$)
and ultimately behaves like a random walk on the space of $\mathbb{R}^{n\times n}$
matrices. In fact, Edelman, Kostlan, and Shub \cite{Edelman1994}
showed that for an $n\times n$ real random matrix whose entries are
drawn from a standard normal distribution, the expected number of
real eigenvalues is approximately $\sqrt{\frac{2n}{\pi}}$. Later
Tao and Vu \cite{TaoVu2012} proved that matrices whose entries are
jointly independent, exponentially decaying, and whose moments match
the real Gaussian ensemble to fourth order have $\sqrt{\frac{2n}{\pi}}+o\left(\sqrt{n}\right)$
real eigenvalues.

It would be interesting to calculate relaxation times for real deterministic
matrices that evolve stochastically and see how long it takes for
the matrix to start acting ``typical'' whereafter the results just
mentioned determine the expected behavior.

For any simple eigenvalue $\lambda_{i}$, the condition number \cite[p. 474]{TrefethenEmbree2005}
is a function of the angle between the left and right eigenvectors
denoted by $\theta_{0}$ 
\[
\kappa_{i}=\frac{\left\Vert \mathbf{u_{i}}\right\Vert \left\Vert \mathbf{v_{i}}\right\Vert }{\left|\mathbf{u_{i}}^{*}\mathbf{v_{i}}\right|}=\left\Vert \mathbf{u_{i}}\right\Vert \equiv\frac{1}{\left|\cos\theta_{0}^{\ell}\right|},
\]
where we used the orthogonality condition (Eq. \ref{eq:orthogonality})
and the normality (unit length) of $\mathbf{v}_{i}$. By the Cauchy-Schwarz
inequality $\kappa_{i}\ge1$. An eigenvalue for which $\kappa_{i}=1$
is called a normal eigenvalue and is stable under perturbation, whereas
an ill-conditioned eigenvalue has $\kappa_{i}\gg1$ . The right and
left eigenvectors associated to an ill-conditioned eigenvalue can
become almost orthogonal implying $\kappa_{i}\gg1$ or equivalently
$\left\Vert \mathbf{u_{i}}\right\Vert \gg1$.

The eigenvalues of normal matrices (e.g., Hermitian, unitary) are
very stable under small perturbations. This is not generally the case
for non-normal (e.g., Toeplitz) matrices, where small perturbations
can change the spectrum significantly \cite{TrefethenEmbree2005,BottEmbreeSokolov}.
Therefore by Corollary \ref{cor:IllCondition}, complex conjugate
eigenvalues that are distant; i.e., $\mbox{Im}\left(\lambda_{i}\right)$
is not necessarily small, can attract strongly if they are ill-conditioned.

\subsection{\label{sub:Conjecture}Does the low density of eigenvalues near the
real line result from repulsion?}

It was previously argued that the relatively low density of eigenvalues
of real random matrices seen near the real axis results from a repulsion
of eigenvalues from the real line \cite{Mays2014}, \cite[Section 6.1]{EdelmanKostlan1995}(see
Fig. \ref{fig:Conjecture}). Preceding \cite{EdelmanKostlan1995},
Edelman derived the distribution of the eigenvalues for standard normal
random matrices and, interestingly, argued that one might think of
the real axis as attracting the nearby eigenvalues \cite[Section 2 following Theorem 6.2]{Edelman}
(preprint appeared in 1993).

One can conceive of a potentially more complete explanation, where
the interaction and dynamics of the eigenvalues take the center stage
and not a mysterious interaction with the real axis. To do so, one
might need to relate every instance of a real random matrix to the
limit of a stochastic process with a base case contained in the deformations
of the particular matrix (see below). 
\begin{figure}
\includegraphics[scale=0.35]{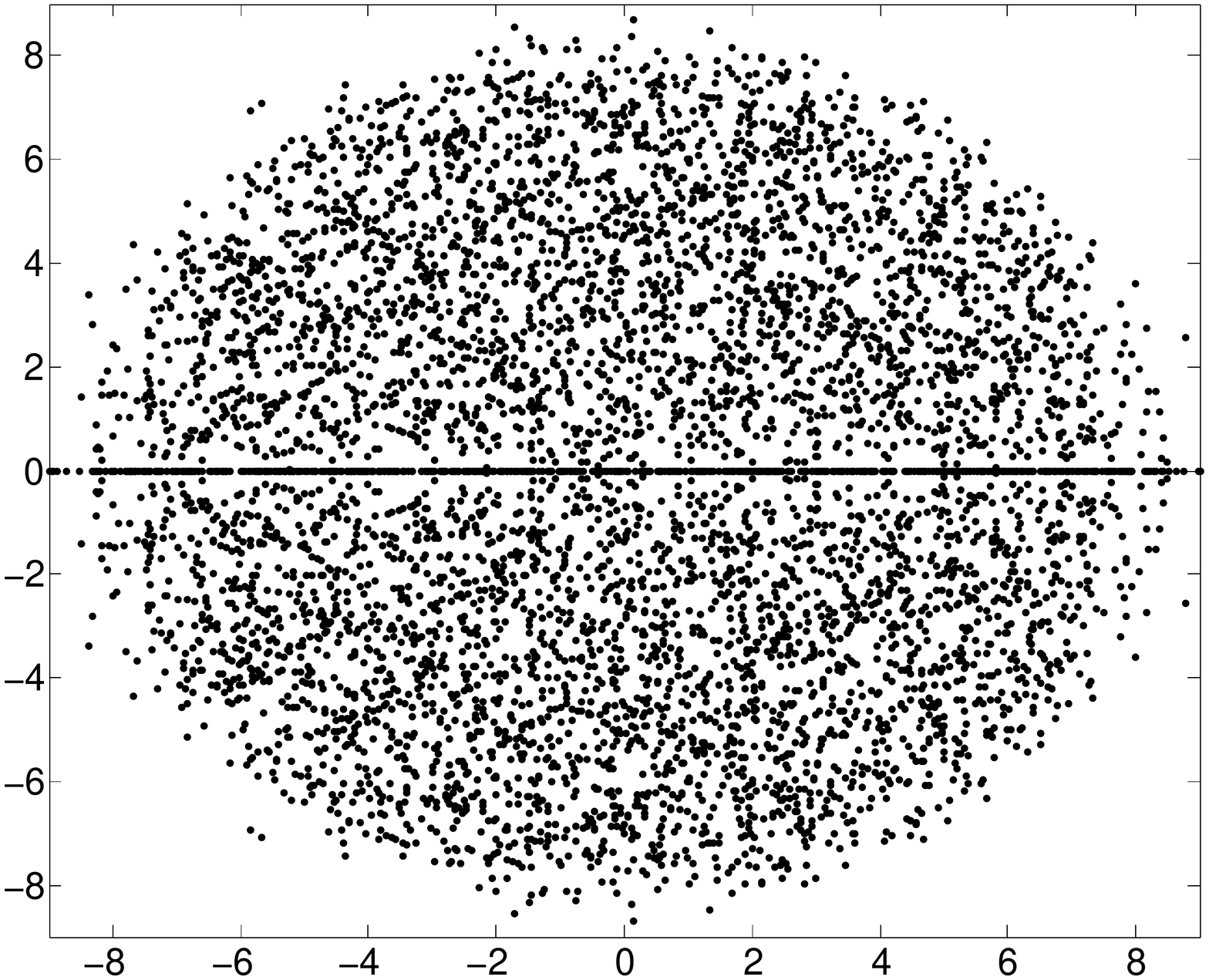}$\quad$\includegraphics[scale=0.35]{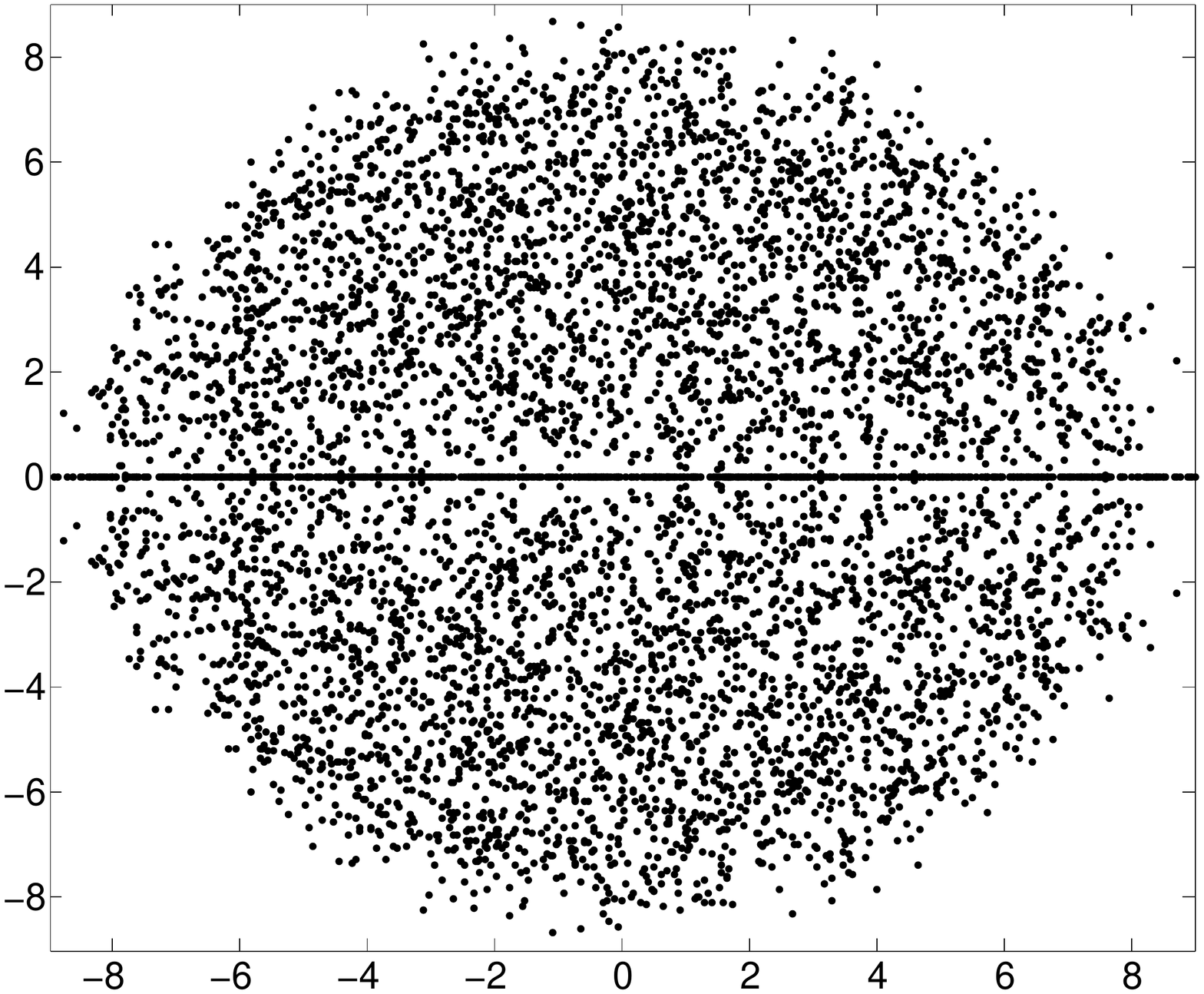}

\caption{\label{fig:Conjecture}Eigenvalues of $100$ instances of $64\times64$
real matrices. Left: Entries drawn from a standard normal distribution.
Right: Entries are random $\pm1$. Note the aggregation of the eigenvalues
on the real axis and their lower nearby density.}
\end{figure}

In the considerations above, the complex conjugate eigenvalues of
a real matrix move more rapidly towards the real line. Moreover, colliding
eigenvalues on the real line have a large acceleration when they shoot
off the real line. This is easily seen from Eq. \ref{eq:lambda_pp_finalRAMIS-1};
real eigenvalues have high accelerations as $\lambda_{i}-\lambda_{j}$
is small and their subsequent motion is either on the real axis or
into the complex plane.

In either case when there is an imaginary component to the acceleration,
its magnitude is quite high. The former corresponds to high accelerations
towards the real line which result in the complex conjugate eigenvalues
becoming real. The latter is a strong repulsion away from the real
line, shooting the eigenvalues into the complex plane away from the
real axis. Therefore, at any given time, on average, one expects a
smaller number of eigenvalues to be in the vicinity of the real axis.
For large times, when the \textit{equilibrium} is reached, $M\left(t\right)$
will have lost the memory of the initial conditions. At all subsequent
times, under the stochastic evolution, some eigenvalues become real
(because of attraction) and some move off the real line (because of
collisions) and on average about $\mathcal{O}\left(\sqrt{n}\right)$
of the eigenvalues will be found on the real line.

Large forces between nearby eigenvalues is in no way special to the
ones with small imaginary components. However, the reality of the
matrix causes an anisotropy-- the acceleration of the eigenvalues
in the imaginary direction becomes larger. 
\begin{conjecture*}
The low density of eigenvalues of real random matrices near the real
axis is the result of the large imaginary components of the acceleration
into (attraction of complex conjugate pair) and away from (colliding
real eigenvalues) the real axis. 
\end{conjecture*}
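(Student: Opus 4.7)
The plan is to recast the conjecture as a statement about the stationary distribution of the many-body dynamical system derived in the paper, and then to show that the anisotropy produced by the two mechanisms (complex conjugate attraction and collision-induced escape) is quantitatively responsible for the observed density profile. First I would fix a reference ensemble $\mathcal{E}$ of real random matrices (e.g.\ real Ginibre) and construct a smooth stochastic process $M_\epsilon(t)$ of the type in Section \ref{sec:Stochatic-dynamics} whose invariant measure coincides with $\mathcal{E}$. The eigenvalues of $M_\epsilon(t)$ then define a continuous particle system in $\mathbb{C}$, symmetric under $z\mapsto\bar z$, whose one-point density $\rho(z)$ is exactly the density conjectured to dip near $\mathbb{R}$.

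Next I would write a Fokker--Planck-type continuity equation $\partial_t\rho+\nabla\!\cdot\!\mathbf{J}=0$ for this particle system, where the current $\mathbf{J}$ is built from the first derivative $\overset{\centerdot}{\lambda_i}$ and the force in Eq.~\ref{eq:KEY_Equation}. The attractive term $-i|c_{i\bar i}|^2/\mathrm{Im}\lambda_i$ produces an imaginary drift that, after ensemble averaging and using Eq.~\ref{eq:Expected_cc_attraction}, reads $\mathbb{E}[F^{\mathrm{attr}}_y]=-\mathrm{sgn}(\mathrm{Im}\lambda)\,\mathbb{E}[p^2]\|\mathbf{u}\|_2^2/|\mathrm{Im}\lambda|$. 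I would integrate this drift against $\rho$ in a thin horizontal strip $|\mathrm{Im}\,z|<\eta$ and show that, in steady state, it produces an inward flux $\Phi_{\mathrm{in}}(\eta)$ scaling like $\eta^{-1}$ up to a prefactor involving the average conditioning $\mathbb{E}\|\mathbf u\|_2^2$ at height $\eta$. The collision mechanism I would model separately: at a pairwise collision of two real eigenvalues, Lidskii--Vishik--Lyusternik gives the emergent pair an initial imaginary velocity of order $(\delta t)^{-1/2}$, so the ejection contributes an outward flux $\Phi_{\mathrm{out}}(\eta)$ equal to the collision rate per unit length on $\mathbb{R}$ times a characteristic crossing time for the strip. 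Setting $\Phi_{\mathrm{in}}(\eta)=\Phi_{\mathrm{out}}(\eta)+(\text{lateral transport})$ at stationarity yields an integral equation for $\rho$ whose solution I would then show exhibits a depletion on a band $|\mathrm{Im}\,z|\lesssim c/\sqrt n$, together with a $\delta$-concentration on $\mathbb{R}$ of total mass $\sqrt{2n/\pi}+o(\sqrt n)$ consistent with the Edelman--Kostlan--Shub and Tao--Vu theorems.

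To upgrade this from heuristic to proof, I would proceed in three technical steps. (i) Justify the continuum limit: use the smoothing of Eq.~\ref{eq:P_t_differentiable} and take $\epsilon\to 0$ after passing to the limiting density, controlling the inertial term via Eqs.~\ref{eq:BndryRATE}--\ref{eq:BndryRate2} so that its contribution averages to zero over the boundary layers. (ii) Control the non-complex-conjugate forces through the variance estimate Eq.~\ref{eq:Expected_Kappa_Final}: since those forces have zero expectation for normal base points and bounded variance otherwise, a Markov/concentration argument shows that on macroscopic scales they contribute isotropic diffusion only, whereas the complex conjugate attraction and the collision ejection both contribute \emph{anisotropic} drift aligned with the imaginary axis. (iii) Quantify the collision rate by adapting the universality results for the real spectrum (Edelman's Jacobian computation) to the dynamical setting.

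The main obstacle, I expect, is step (iii) combined with the non-smoothness at collisions. The Lidskii--Vishik--Lyusternik correction $(\delta t)^{1/2}$ lies outside standard It\^o calculus, so the ejection events behave like a singular boundary condition at $\mathrm{Im}\,z=0$ rather than a classical drift. A clean way to handle this would be to treat $\mathbb{R}$ as an absorbing-reflecting interface with prescribed flux, derive matching conditions on $\rho$ across the real line, and verify consistency with the known $\sqrt{2n/\pi}$ real eigenvalue count. Closing this loop -- i.e.\ showing that the balance between attraction-driven absorption and collision-driven reflection reproduces the observed ratio of the real-line mass to the near-real deficit -- is the substantive content of the conjecture, and the step where a general ensemble-independent proof is most likely to require genuinely new random matrix input.
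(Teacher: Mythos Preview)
The statement you are attempting to prove is not proved in the paper: it is explicitly labeled a \emph{Conjecture} and is presented in Section~\ref{sub:Conjecture} as an open problem. The paper offers only a heuristic motivation (the large imaginary accelerations near $\mathbb{R}$, both attractive and ejective) and a suggested first step (``construct any $n\times n$ real random matrix as a limit of a dynamical process\ldots{} Then one can relate the expectation of finding an eigenvalue to the expectation of the time it spends anywhere on the complex plane''). There is therefore no paper-proof to compare your proposal against.

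Your proposal is consistent with, and considerably more detailed than, the heuristic the paper sketches: you make the same basic move of realising the ensemble as the stationary measure of a stochastic process and then reading the density deficit off a flux balance. That is a reasonable research programme. But you should be clear that what you have written is a plan, not a proof, and you have already correctly identified the main gap yourself: step~(iii) and the treatment of collisions. Specifically, (a) the existence of a smooth process $M_\epsilon(t)$ whose invariant measure is exactly real Ginibre is asserted, not established, and the paper gives no such construction; (b) the Fokker--Planck equation you invoke is not derived for this non-central, non-Markovian particle system, and the passage from the matrix SDE to a closed equation for the one-point density $\rho$ hides a BBGKY-type hierarchy that must be truncated and justified; (c) the Lidskii--Vishik--Lyusternik $(\delta t)^{1/2}$ scaling at collisions means the eigenvalue process is not a semimartingale at those times, so neither It\^o calculus nor a standard absorbing/reflecting boundary condition applies without further work. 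Until at least (a) and (b) are carried out rigorously, the flux-balance argument remains heuristic at the same level as the paper's own discussion, and the conjecture remains open.
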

The relative lack of stability of eigenvalues may explain their aggregation
on the real line, as well as their low nearby density. In order to
settle this conjecture, a first step might be to construct any $n\times n$
real random matrix as a limit of a dynamical process like we did above.
In particular, deformations of a given random matrix are also random,
so in a way, one can conceive of the stochastic process to be the
deformations of a matrix in the neighborhood of the random matrix.
Then one can relate the expectation of finding an eigenvalue to the
expectation of the time it spends anywhere on the complex plane, which,
for real random matrices, would be lower in the vicinity of the real
axis.

It is our belief that the repulsion of eigenvalues away from the real
line is only part of the story in accounting for their relative low
density near the real axis.

\subsection{Further opportunities for future work}

A list of other open problems includes:
\begin{enumerate}
\item What is the probability of collision of eigenvalues on the the real
line?\footnote{This question was posed to us by Freeman Dyson.} The
answer to this question would be a helpful step in proving the conjecture
above.
\item Estimation of $c_{ij}$ would help quantify the direction and strength
of interaction between pairs of eigenvalues. 
\item One could give an estimate of the imaginary part of Eq. \ref{eq:HatanoNelson_acceleration}.
By doing so one can solve the differential equation to calculate the
time it takes for any eigenvalue, $\lambda_{k}$, to reach the real
axis, which is the time that $\lambda_{k}$ and $\overline{\lambda_{k}}$
collide and momentarily become degenerate. Indeed, let $u\equiv\mbox{Im}\left(\lambda_{k}\right)$.
The imaginary part of the differential equation (Eq. \ref{eq:HatanoNelson_acceleration})
is of the form $\overset{\centerdot\centerdot}{u}=f\left(u\right)$
, which can be solved by first multiplying both sides by $\overset{\centerdot}{u}$.
That is $\overset{\centerdot}{u}\overset{\centerdot\centerdot}{u}=\frac{1}{2}\frac{d}{dt}\left(\overset{\centerdot}{u}\right)^{2}=\overset{\centerdot}{u}f\left(u\right)$
and one has $d\left(\overset{\centerdot}{u}\right)^{2}=2f\left(u\right)\mbox{ }du$,
hence $\overset{\centerdot}{u}\left(t\right)=\sqrt{\left(\overset{\centerdot}{u}\left(0\right)\right)^{2}+2\int\mbox{ }f\left(u\right)\mbox{ }du}$,
which can be integrated once more to solve for $t$ when $u\left(t\right)=0$. 
\item Study of eigenvector localization, especially as the eigenvectors
become more real.
\item Toeplitz matrices provide excellent examples of matrices that can
be asymmetric and arise in various applications \cite{TrefethenEmbree2005}.
It would be interesting to better understand the role of the symbol
(e.g. its singularity) \cite{TrefethenEmbree2005} in connection with
this work.
\item Do new features appear in the operator limit?
\item Application of this work in other areas such as open quantum systems
\cite{ticozzi2012hamiltonian,ticozzi2012stabilizing}, PT-symmetric
material \cite{Ramezani2011} and biophysics \cite{Nelson2012}.
\end{enumerate}

\section*{Acknowledgements}

I thank Leo P. Kadanoff, Steven G. Johnson, Tony Iarrobino and Gil
Strang for discussions and the James Franck Institute at University
of Chicago and the Perimeter Institute Canada, for having hosted me
over the summer of 2013. I acknowledge the National Science Foundation's
support through grant DMS. 1312831.

 \bibliographystyle{apsrev4-1}
\bibliography{mybib}

\newpage{}

\section{Appendix: Matlab code}

\begin{algorithm}[H]
\begin{centering}
\includegraphics[scale=0.9]{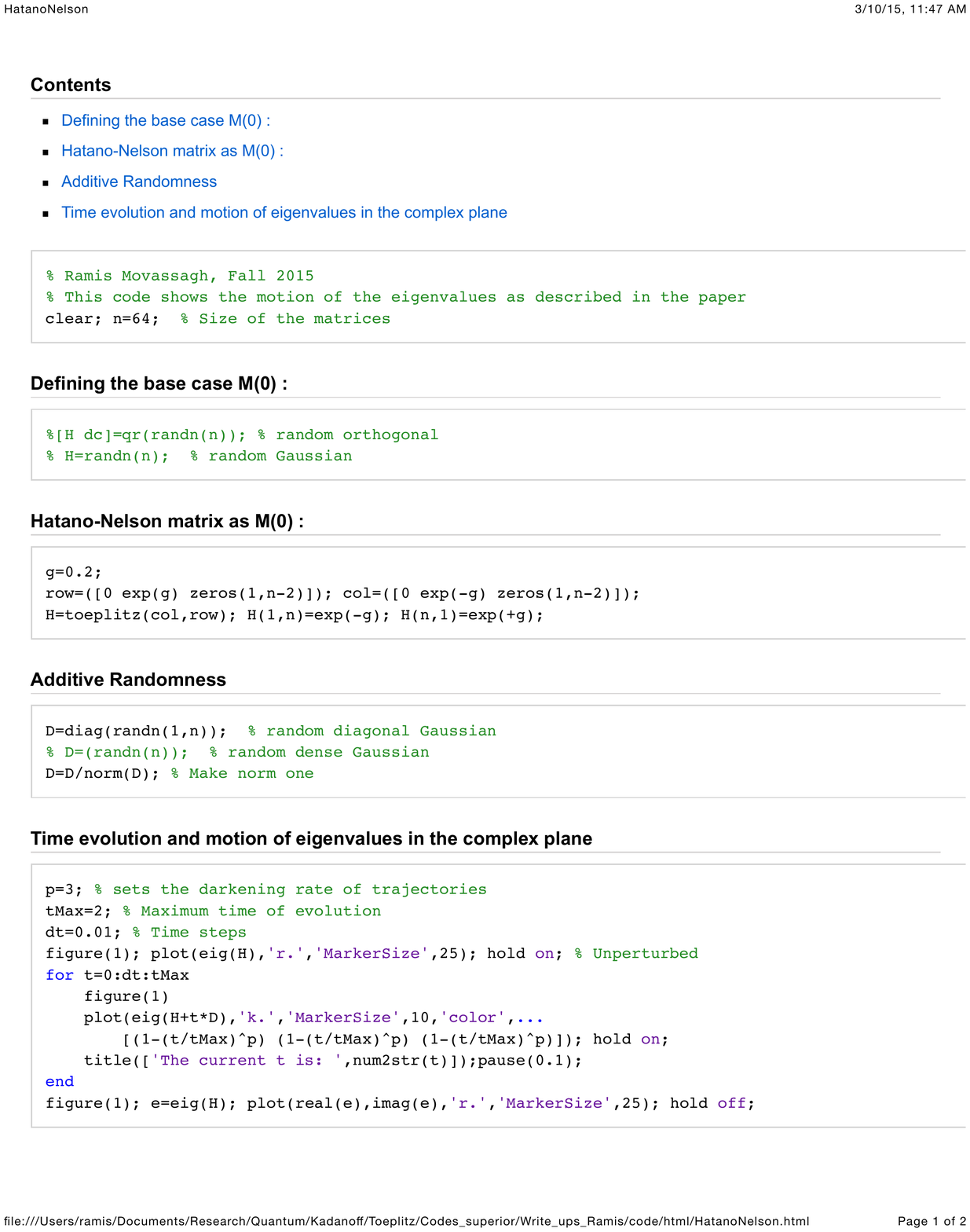} 
\par\end{centering}

\caption{Code for plots shown in Sections \ref{sec:Motivation} and \ref{sub:Further-illustrations}.
The base case can be changed by (un)commenting.}
\end{algorithm}

\end{document}